\newtheorem{thm}{Theorem}[section]
\newtheorem{prop}[thm]{Proposition}
\newtheorem{lem}[thm]{Lemma}
\newtheorem{cor}[thm]{Corollary}
\theoremstyle{definition}
\newtheorem{defn}[thm]{Definition}
\newtheorem{notn}[thm]{Notation}
\newtheorem{example}[thm]{Example}
\theoremstyle{remark}
\newtheorem{rem}[thm]{Remark}
\numberwithin{equation}{section}
\numberwithin{thm}{section}
\newcommand\mynobreakpar{\vspace{0.02in}\par\nobreak\@afterheading}  
	\def\MR#1{}
\DeclareMathOperator{\ICM}{ICM}
\DeclareMathOperator{\re}{Re}
\DeclareMathOperator{\im}{Im}
\DeclareMathOperator{\Mat}{Mat}
\DeclareMathOperator{\GL}{GL}
\DeclareMathOperator{\Fix}{Fix}
\DeclareMathOperator{\Cl}{Cl}
\DeclareMathOperator{\Tr}{Tr}
\DeclareMathOperator{\diag}{diag}
\DeclareMathOperator{\disc}{disc}
\renewcommand{\Fix}{\mathrm{Fix}}
\renewcommand{\diag}{\mathrm{diag}}
\newcommand{\fp}{\mathfrak{p}}
\newcommand{\cC}{\mathcal{C}}
\newcommand{\cI}{\mathcal{I}}
\newcommand{\cM}{\mathcal{M}}
\newcommand{\cO}{\mathcal{O}}
\newcommand{\cP}{\mathcal{P}}
\newcommand{\C}{\mathbb{C}}
\newcommand{\F}{\mathbb{F}}
\newcommand{\Q}{\mathbb{Q}}
\newcommand{\Z}{\mathbb{Z}}
\newcommand{\oOm}{\overline{\Omega}}
\newcommand{\eps}{\varepsilon}
\newcommand{\To}{\longrightarrow}
\newcommand{\bs}{\setminus}
\newcommand{\Mod}[1]{\ (\mathrm{mod}\ #1)}
\newcommand{\abs}[1]{\lvert#1\rvert}
\newcommand{\ind}{\mathbbm{1}}
\title{On ideal class groups of totally degenerate number rings}
\author{Ruben Hambardzumyan}
\address{Faculty of Mathematics and Mechanics, Yerevan State University, Yerevan, Armenia}
\email{ruben.hambardzumyan2@edu.ysu.am}
\author{Mihran Papikian}
\address{Department of Mathematics, Pennsylvania State University, University Park, Pennsylvania, United States of America}
\email{papikian@psu.edu}
\thanks{The first author was supported by the Science Committee of Republic of Armenia  (Research project No 23RL-1A027).} 
\thanks{The second author was supported in part by the Simons Foundation, award number MPS-TSM-00008093.} 
\subjclass[2020]{11R29, 11R54, 15B36}
\keywords{Class group, ideal class monoid, number ring}
\begin{document}
	
	\maketitle
	
	\begin{abstract}
		Let $\chi(x)\in \Z[x]$ be a monic polynomial whose roots are distinct integers. We study the ideal class monoid and the ideal class group of the ring 
		$\Z[x]/(\chi(x))$. We obtain formulas for the orders of these objects,  
		and study their asymptotic behavior as the discriminant of $\chi(x)$ tends to infinity, in analogy with the Brauer-Siegel theorem. Finally, we describe the structure of the ideal class group when the degree of $\chi(x)$ is $2$ or $3$. 
	\end{abstract}
	

\section{Introduction} The study of class groups of the rings of integers of number fields can be traced back to Gauss. The class group $\Cl(\cO_K)$ of the ring of integers $\cO_K$ of a number field $K$ is finite, 
and, from the perspective of abstract algebra, it measures the failure of the 
uniqueness of factorization in $\cO_K$. On the other hand, in  number theory, $\Cl(\cO_K)$ is a fundamental 
invariant of $K$ related to many famous questions such as  the existence of unramified abelian extensions of $K$, the residue at $s=1$ of the Dedekind 
zeta function $\zeta_K(s)$ of $K$, Fermat's Last Theorem, and many others. 

The class group can be defined also for an order in $\cO_K$, i.e., for a subring $R\subseteq \cO_K$ with the same identity 
such that $\cO_K/R$ is finite. One new subtlety here is that not all fractional ideals of $R$ are invertible. The class groups of orders 
naturally arise in class field theory and can be related to the class group of the maximal order $\cO_K$; see  \cite{Neukirch}, \cite{Cox},  \cite{KL}. 
Generalizing this further, one can study the class groups of orders in products of numbers fields. An analytic class number formula 
in this generality was proved recently in \cite{JP}, and there have been recent advances in computational aspects of the theory of ideals of such orders; 
see \cite{EHO}, \cite{Marseglia}, \cite{BHJ}.

In this paper we study the class groups of orders in the direct product of $n$ copies of $\Q$, that is, the ``totally degenerate" case. 
The phrase ``totally degenerate number rings" in the title refers specifically to such orders, and should not be confused with the 
more conventional use of ``number ring" to denote the ring of integers of a number field (as, for example, in \cite{Marcus}).

Let $a_1<a_2<\cdots<a_n$ be distinct integers and put  
$$
\chi(x)=(x-a_1)(x-a_2)\cdots(x-a_n). 
$$
Let $R=\Z[x]/(\chi(x))$ be the quotient of the polynomial ring $\Z[x]$ by the ideal generated by $\chi(x)$. Then $R$ 
is a $\Z$-order in $$K=\Q\otimes_{\Z} R\cong \Q[x]/(\chi(x))\cong \prod_{i=1}^n\Q.$$
The normalization of $R$ in $K$ is $\prod_{i=1}^n\Z$.

The fractional ideals in $R$, up to linear equivalence, form a finite monoid under multiplication of ideals, the \textit{ideal class monoid}, which 
we denote by $\ICM(R)$ (following the notation in \cite{Marseglia}). The \textit{ideal class group} $\Cl(R)$ is the group of 
units of $\ICM(R)$. 

To state the main results of this paper, we need to introduce some notation.  
Let $k$ be an integer such that $1\leq k\leq n-1$.  
For a subset $S\subset \{1, 2, \dots, n\}$ of order $k+1$, let 
	$$
	\Delta_S = \prod_{\substack{i, j\in S\\ i<j}} (a_j-a_i). 
	$$
	Let 
	\begin{equation}\label{eqDeltak}
	\Delta_k = \gcd_{ \abs{S}=k+1} (\Delta_S),
	\end{equation}
	where $S$ runs over all subsets of $\{1, \dots, n\}$ of order $k + 1$. Note that 
	$$\Delta\colonequals \Delta_{n-1}=\prod_{i<j}(a_j-a_i)$$ 
	is the square-root of the discriminant of $\chi(x)$ 
	and 
	$$
	\Delta_1 = \gcd_{i<j} (a_j-a_i). 
	$$
Let $\varphi$ be Euler's totient function. 

Using the class number formula in \cite{JP}, we prove the following theorem. 

\begin{thm}\label{thmMain1}
	If $a_n-a_1\geq 4$, then 
	$$
	\abs{\Cl(R)} = \frac{\varphi(\Delta)}{2^{n-1}} \prod_{l=1}^{n-2} \frac{\varphi(\Delta_l)}{\Delta_l}.
	$$
	If $a_n-a_1<4$, then $\abs{\Cl(R)} = 1$. 
\end{thm}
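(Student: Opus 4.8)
The plan is to feed the explicit arithmetic of $R$ into the analytic class number formula of \cite{JP}. Write $\widetilde R=\prod_{i=1}^{n}\Z$ for the normalization of $R$ in $K$, and let $\ff=\{r\in\widetilde R:\ r\widetilde R\subseteq R\}$ be its conductor. Since $\Cl(R)$ is the group of units of $\ICM(R)$, it coincides with the Picard group $\Pic(R)$ of classes of invertible fractional ideals. In the totally degenerate setting the formula of \cite{JP} simplifies drastically --- each factor of $K$ is $\Q$, so all ambient unit groups have rank $0$ and $\Pic(\widetilde R)=\Pic(\prod_i\Z)=0$ --- and reduces to
\begin{equation}\label{eqCNF}
|\Cl(R)|=\frac{1}{[\widetilde R^\times:R^\times]}\cdot\frac{|(\widetilde R/\ff)^\times|}{|(R/\ff)^\times|}.
\end{equation}
So the work is to evaluate the three quantities on the right-hand side of \eqref{eqCNF}.

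I would first handle the unit index. Here $\widetilde R^\times=\{\pm1\}^{n}$, and a sign vector $(\eps_1,\dots,\eps_n)$ lies in $R$ precisely when the polynomial interpolating the points $(a_i,\eps_i)$ has integer coefficients; in particular $a_i-a_j\mid\eps_i-\eps_j$ for all $i,j$. When $a_n-a_1\geq 4$ this forces $\eps_1=\eps_n$ (since $|a_n-a_1|\nmid 2$), and then an index $i$ with $\eps_i\neq\eps_1$ would have to satisfy $|a_i-a_1|\leq 2$ and $|a_i-a_n|\leq 2$ simultaneously --- impossible unless $a_n-a_1=4$ and $a_i=a_1+2$ is the unique such index; restricting to the three-point subconfiguration $\{a_1,a_1+2,a_1+4\}$ (whose order is a quotient of $R$) then gives a contradiction by a one-line computation. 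Hence $R^\times=\{\pm1\}$ and $[\widetilde R^\times:R^\times]=2^{n-1}$ whenever $a_n-a_1\geq 4$.

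Next I would compute the conductor and the two unit counts, working one prime at a time since all relevant quantities are multiplicative over $\Z$. Lagrange interpolation identifies $\ff$ with $\prod_{i=1}^{n}\chi'(a_i)\Z$, so $|(\widetilde R/\ff)^\times|=\prod_{i=1}^{n}\varphi(|\chi'(a_i)|)$ and $|\widetilde R/\ff|=\prod_i|\chi'(a_i)|=\Delta^2$; the Vandermonde determinant gives $[\widetilde R:R]=\Delta$, hence $|R/\ff|=\Delta$. It remains to compute the unit density $\rho(R/\ff):=|(R/\ff)^\times|/|R/\ff|=\prod_p\rho(R_p/\ff_p)$. Fix $p$ and group $a_1,\dots,a_n$ by residue modulo $p$: the Chinese remainder theorem splits $R_p=\Z_p[x]/(\chi)$ as the product over residue classes of the orders $\Z_p[x]/(\chi_c)$, with $\ff_p$ the product of the corresponding conductors. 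A residue class of size $1$ contributes the normal ring $\Z_p$ (no conductor), while a residue class $c$ of size $\geq 2$ contributes $\Z_p[x]/(\chi_c)$, which is a \emph{local} ring with residue field $\F_p$ (its reduction mod $p$ is $\F_p[x]/(x-\bar a)^{|c|}$) and is not normal, so its conductor lies inside the maximal ideal and the finite local ring obtained by reducing modulo the conductor has unit density $1-1/p$. Therefore $\rho(R_p/\ff_p)=(1-1/p)^{m_p}$, where $m_p$ is the number of residue classes mod $p$ of size $\geq 2$; comparing with $\rho(\widetilde R/\ff)=\prod_p(1-1/p)^{\#\{i:\,p\mid\chi'(a_i)\}}$ and the identity $\#\{i:\,p\mid\chi'(a_i)\}=\sum_{|c|\geq 2}|c|$, I obtain
$$\frac{|(\widetilde R/\ff)^\times|}{|(R/\ff)^\times|}=\Delta\cdot\frac{\rho(\widetilde R/\ff)}{\rho(R/\ff)}=\Delta\prod_p(1-1/p)^{\,n-r_p},$$
where $r_p$ is the number of distinct residues of $a_1,\dots,a_n$ modulo $p$ (so that $\sum_{|c|\geq 2}(|c|-1)=n-r_p$).

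Finally I would match this against the stated right-hand side. Using $\varphi(m)=m\prod_{p\mid m}(1-1/p)$ together with the divisibility chain $\Delta_1\mid\Delta_2\mid\cdots\mid\Delta_{n-1}=\Delta$ (immediate from $\Delta_S\mid\Delta_{S'}$ whenever $S\subseteq S'$), the quantity $\varphi(\Delta)\prod_{l=1}^{n-2}\frac{\varphi(\Delta_l)}{\Delta_l}$ equals $\Delta\prod_{p\mid\Delta}(1-1/p)^{e_p}$ with $e_p=1+\#\{1\leq l\leq n-2:\ p\mid\Delta_l\}$; and since $v_p(\Delta_k)=\min_{|S|=k+1}\sum_{i<j\in S}v_p(a_j-a_i)$, the pigeonhole principle shows $v_p(\Delta_k)\geq 1$ exactly when $k\geq r_p$, whence $e_p=1+(n-1-r_p)=n-r_p$. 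This identifies $\varphi(\Delta)\prod_{l=1}^{n-2}\frac{\varphi(\Delta_l)}{\Delta_l}$ with $|(\widetilde R/\ff)^\times|/|(R/\ff)^\times|$, and substituting into \eqref{eqCNF} with $[\widetilde R^\times:R^\times]=2^{n-1}$ yields the first assertion. For $a_n-a_1<4$ there are, up to translation, only finitely many configurations (those with $\{a_1,\dots,a_n\}\subseteq\{0,1,2,3\}$), and in each of them one checks directly that $|\Cl(R)|=1$ --- the clean formula must be restricted because for several of these configurations $R^\times$ is strictly larger than $\{\pm1\}$. The main obstacle is the local unit count in the third step: recognizing that after the Chinese remainder theorem the relevant local pieces of $R_p$ are local rings with residue field $\F_p$, so that counting their units is just deleting a maximal ideal; the remaining steps are bookkeeping with $\varphi$, the Vandermonde and conductor identities, and the combinatorics of the $\Delta_k$.
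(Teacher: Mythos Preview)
Your argument is correct and complete, but it follows a genuinely different route from the paper's proof. The paper uses the \emph{analytic} form of the class number formula from \cite{JP},
\[
\lim_{s\to 1}(s-1)^n\zeta_R(s)=\frac{2^n}{|R^\times|\,\Delta}\,|\Cl(R)|,
\]
and computes the left-hand side by counting the maximal ideals of $R$ above each rational prime $p$: there are exactly $r_p$ of them (one per distinct residue of the $a_i$ modulo~$p$), so $\zeta_R(s)=\zeta_\Z(s)^n\prod_p(1-p^{-s})^{n-r_p}$ and the residue is $\prod_p(1-p^{-1})^{n-r_p}$. You instead invoke the \emph{algebraic} conductor formula $|\Cl(R)|=[\widetilde R^\times:R^\times]^{-1}\,|(\widetilde R/\ff)^\times|/|(R/\ff)^\times|$, compute $\ff=\prod_i\chi'(a_i)\Z$ via Lagrange interpolation, and obtain the same Euler product through a prime-by-prime CRT decomposition of $R_p$ into local pieces. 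Both proofs then identify $\prod_p(1-p^{-1})^{n-r_p}$ with $\prod_{l=1}^{n-1}\varphi(\Delta_l)/\Delta_l$ by the identical pigeonhole argument ($p\mid\Delta_l\Leftrightarrow r_p\le l$), and both handle $R^\times$ and the small cases $a_n-a_1<4$ in essentially the same way.

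What each approach buys: the paper's zeta computation is shorter and conceptually cleaner --- one line of Euler-product manipulation --- but relies on the analytic machinery of \cite{JP}. Your conductor approach is more hands-on and self-contained (no zeta functions, no analytic continuation), at the cost of the explicit conductor computation and the local-ring unit count; it also makes the role of $\Delta=[\widetilde R:R]$ and $\Delta^2=|\widetilde R/\ff|$ transparently visible. Two small remarks: the formula you label as ``of \cite{JP}'' is really the classical Picard-group conductor formula (e.g.\ Neukirch I.12.12, extended to products of number fields), not the analytic formula that \cite{JP} actually proves --- the two are of course equivalent here, but the attribution is slightly off; and when you write $\Pic(\prod_i\Z)=0$ you mean the group is trivial, i.e.\ $|\Pic(\widetilde R)|=1$.
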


Next, using the above formula, we derive the asymptotic behavior of $\abs{\Cl(R)}$ as $\Delta$ tends 
to infinity. This can be interpreted as an analogue of the celebrated Brauer-Siegel theorem in this context since the regulator of $R$ is trivial. 

\begin{thm}\label{thmMain2}
	\begin{align*}
		& \liminf_{\Delta\to \infty} \frac{\abs{\Cl(R)} \cdot 2^{n-1}}{\varphi(\Delta)} = 
		\begin{cases}
			1, & \text{if } n=2; \\ 
			0, & \text{if } n\geq 3.
		\end{cases} \\ 
		& \limsup_{\Delta\to \infty} \frac{\abs{\Cl(R)} \cdot 2^{n-1}}{\varphi(\Delta)} = \prod_{\substack{p \le n - 2 \\ p \text{ is prime}}} (1 - p^{-1})^{n - 1 - p}.
	\end{align*}
\end{thm}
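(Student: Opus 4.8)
The plan is to feed the closed formula of Theorem~\ref{thmMain1} into a handful of elementary estimates, so that the argument is essentially combinatorial once that theorem is available. As $\Delta\to\infty$ we eventually have $a_n-a_1\geq 4$, and then Theorem~\ref{thmMain1} gives
\[
\frac{\abs{\Cl(R)}\cdot 2^{n-1}}{\varphi(\Delta)}\;=\;P\;\colonequals\;\prod_{l=1}^{n-2}\frac{\varphi(\Delta_l)}{\Delta_l}\;=\;\prod_{p}\Bigl(1-\tfrac1p\Bigr)^{c_p},\qquad c_p\colonequals\#\bigl\{\,l:\ 1\leq l\leq n-2,\ p\mid\Delta_l\,\bigr\},
\]
using $\varphi(m)/m=\prod_{p\mid m}(1-1/p)$ and regrouping the double product (a finite product, since each $\Delta_l$ has only finitely many prime divisors). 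For $n=2$ the product over $l$ is empty, so $P\equiv 1$ and both claims hold, the asserted $\limsup$ being an empty product. So assume $n\geq 3$ henceforth.

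For the $\liminf$ I would produce $\chi$ with $\Delta$ arbitrarily large and $P$ arbitrarily small. Take $a_i=iM$ for $1\leq i\leq n$, where $M=M(y)$ is the product of the primes $\leq y$. Then $a_j-a_i=(j-i)M$, and since the differences $j-i$ run over $1,\dots,n-1$ we get $\Delta_1=\gcd_{i<j}(a_j-a_i)=M$, so $\varphi(\Delta_1)/\Delta_1=\varphi(M)/M=\prod_{q\leq y}(1-1/q)\to 0$ as $y\to\infty$, by divergence of the sum of reciprocals of the primes. Since $n-2\geq 1$, the factor $\varphi(\Delta_1)/\Delta_1$ occurs in $P$ and all other factors are $\leq 1$, hence $0<P\leq\varphi(M)/M$; meanwhile $\Delta=M^{\binom n2}\prod_{i<j}(j-i)\to\infty$. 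As $P>0$ always, this gives $\liminf_{\Delta\to\infty}P=0$.

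For the $\limsup$, the key input is a pigeonhole remark valid for \emph{every} admissible $\chi$: if $p$ is a prime with $p\leq l$, then any $l+1$ distinct integers contain two congruent mod $p$, so $p\mid\Delta_S$ for every $S$ with $\abs{S}=l+1$, whence $p\mid\Delta_l$. Thus $c_p\geq\#\{\,l:\ p\leq l\leq n-2\,\}=\max(0,\,n-1-p)$, and since $0<1-1/p<1$,
\[
P=\prod_{p}\Bigl(1-\tfrac1p\Bigr)^{c_p}\ \leq\ \prod_{\substack{p\,\leq\,n-2\\ p\ \text{prime}}}\Bigl(1-\tfrac1p\Bigr)^{n-1-p},
\]
the desired upper bound. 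To see it is attained with $\Delta$ arbitrarily large, take $a_i=i-1$ for $1\leq i\leq n-1$ and $a_n=N$ with $N>n-2$, $N\to\infty$; then $\Delta$ is divisible by $\prod_{i=1}^{n-1}(N-a_i)\to\infty$. Fix $l$ with $1\leq l\leq n-2$. By the pigeonhole remark $\rad(\Delta_l)$ contains every prime $\leq l$. Conversely, if $p>l$ is prime, then for the $(l+1)$-subset $S$ with $a$-values $\{0,1,\dots,l\}$ (available since $l\leq n-2$) the factors of $\Delta_S$ are the differences $a_j-a_i\in\{1,\dots,l\}$, all coprime to $p$, so $p\nmid\Delta_S$ and hence $p\nmid\Delta_l$. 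Therefore $\rad(\Delta_l)$ is exactly the set of primes $\leq l$, so $\varphi(\Delta_l)/\Delta_l=\prod_{p\leq l}(1-1/p)$, and multiplying over $l=1,\dots,n-2$ gives $P=\prod_{p\leq n-2}(1-1/p)^{n-1-p}$ \emph{exactly}, for every such $N$. Hence this value is a limit point of $P$ as $\Delta\to\infty$, and with the upper bound we conclude $\limsup_{\Delta\to\infty}P=\prod_{p\leq n-2}(1-1/p)^{n-1-p}$.

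The only step needing genuine (if modest) care is the identification of $\rad(\Delta_l)$ for the extremal family, specifically the inclusion $\rad(\Delta_l)\subseteq\{p\leq l\}$: one must exploit that the gcd defining $\Delta_l$ ranges over \emph{all} $(l+1)$-subsets, in particular over the short consecutive block $\{0,1,\dots,l\}$, for which $\Delta_S$ is a product of integers in $\{1,\dots,l\}$ and hence has no prime factor exceeding $l$. Everything else reduces to Theorem~\ref{thmMain1}, pigeonhole, and divergence of $\sum_p 1/p$.
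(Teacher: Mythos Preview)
Your argument is correct and follows the same overall route as the paper: use Theorem~\ref{thmMain1} to reduce to studying $P=\prod_{l=1}^{n-2}\varphi(\Delta_l)/\Delta_l$, handle the $\liminf$ by choosing the $a_i$ in arithmetic progression with common difference a primorial, and handle the $\limsup$ by bounding $P$ from above universally and then realising the bound on the family $a_1=0,\dots,a_{n-1}=n-2$, $a_n\to\infty$.

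The one noteworthy difference is in the upper bound. The paper proves a stronger divisibility, namely that $\rho_l\colonequals\prod_{1\le i<j\le l+1}(j-i)$ divides $\Delta_l$ for every choice of $a_1,\dots,a_n$ (their Lemma~\ref{prop_rho}, a Vandermonde-determinant trick), and deduces $\varphi(\Delta_l)/\Delta_l\le\varphi(\rho_l)/\rho_l$. You instead observe directly by pigeonhole that every prime $p\le l$ divides $\Delta_l$, which already yields $\varphi(\Delta_l)/\Delta_l\le\prod_{p\le l}(1-1/p)$ because $\varphi(m)/m$ depends only on $\rad(m)$. Your route is more elementary and exactly sufficient for the theorem; the paper's lemma gives a sharper arithmetic statement ($\rho_l\mid\Delta_l$, not just $\rad(\rho_l)\mid\Delta_l$) at the cost of a short extra argument. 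Likewise, for the extremal family the paper asserts $\Delta_l=\rho_l$, whereas you only identify $\rad(\Delta_l)$, which again is all that is needed here.
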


We also prove an asymptotic formula for the order of $\ICM(R)$, although a closed formula for $\abs{\ICM(R)}$ seems much harder to obtain. 

\begin{thm}\label{thmMain3}
	For any $n\geq 1$, 
	$$
	\lim_{\Delta\to \infty}\frac{\abs{\ICM(R)}\cdot 2^{n-1}}{\Delta}=1. 
	$$	
\end{thm}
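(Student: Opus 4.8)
The plan is to reduce $\abs{\ICM(R)}$ to a lattice count inside the normalization $\widetilde R\colonequals\prod_{i=1}^n\Z$, run a Burnside argument for the action of $\widetilde R^\times$, and then estimate the two resulting contributions. Let $\ff$ be the conductor of $R$ in $\widetilde R$; since $R=\Z[\alpha]$ is monogenic with $\alpha=(a_1,\dots,a_n)$, one has $\ff=\chi'(\alpha)\widetilde R=\prod_{i=1}^n\chi'(a_i)\Z$ and $[\widetilde R:\ff]=\prod_i\abs{\chi'(a_i)}=\Delta^2$. Call a fractional $R$-ideal $M$ \emph{primitive} if $M\subseteq\widetilde R$ and $M\widetilde R=\widetilde R$, equivalently if $M\subseteq\widetilde R$ and $M$ surjects onto each of the $n$ coordinate copies of $\Z$.

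First I would show that $\ICM(R)$ is in bijection with the set $\cP$ of primitive $R$-submodules of $\widetilde R$ modulo the action of $\widetilde R^\times/\{\pm(1,\dots,1)\}\cong\{\pm1\}^{n-1}$. Given a fractional ideal $I$, we have $I\widetilde R=\lambda\widetilde R$ for some $\lambda\in K^\times$ because $\widetilde R$ is a finite product of PIDs; then $\lambda^{-1}I$ is primitive, lies in the same class, and contains $\ff$ since $\ff=\ff\widetilde R=\ff\cdot(\lambda^{-1}I)\widetilde R=\ff\cdot(\lambda^{-1}I)\subseteq\lambda^{-1}I$. Conversely every primitive $M$ contains $\ff$, because $\ff=\ff M\widetilde R=\ff M\subseteq M$. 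Two primitive representatives of one class differ by $\lambda$ with $\lambda\widetilde R=\widetilde R$, i.e. $\lambda\in\widetilde R^\times=\{\pm1\}^n$, and $(-1,\dots,-1)\in R$ acts trivially, so $\{\pm1\}^{n-1}$ acts on the finite set $\cP$ with orbit set $\ICM(R)$. By Burnside,
$$\abs{\ICM(R)}\cdot 2^{n-1}=\sum_{\epsilon\in\{\pm1\}^{n-1}}\abs{\Fix(\epsilon)}=\abs{\cP}+\sum_{\epsilon\neq 1}\abs{\Fix(\epsilon)}.$$

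The core of the argument is the exact identity $\abs{\cP}=\Delta$. Reducing modulo $\ff$ identifies $\cP$ with the set of $R$-submodules of $\widetilde R/\ff=\prod_i\Z/\chi'(a_i)\Z$ surjecting onto each factor, and the Chinese Remainder Theorem gives $\abs{\cP}=\prod_p N_p$, where $N_p$ is the corresponding count over $\Z_p$ for $R_p=\Z_p[\alpha]\subseteq\widetilde R_p=\prod_i\Z_p$. I would prove $N_p=p^{v_p(\Delta)}$ by induction: if the $a_i$ are pairwise distinct modulo $p$ then $R_p=\widetilde R_p$ and $N_p=1=p^0$; in general one stratifies the $a_i$ by their residues and peels off one level of the $p$-adic clustering tree, relating primitive $R_p$-submodules of $\widetilde R_p$ to those of the order on $n-1$ roots via the surjection $\Z_p[x]/(\chi)\twoheadrightarrow\Z_p[x]/(\chi/(x-a_n))$ and Goursat's lemma. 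This local identity is the main obstacle of the proof: it is a combinatorial statement about sublattices stable under a given endomorphism that is nilpotent modulo $p$, and the delicate point is tracking which sublattices remain surjective on coordinates under projection as one descends the recursion. Applying the same identity to the subconfiguration indexed by any subset $A\subseteq\{1,\dots,n\}$ — and using that a primitive submodule of $\prod_{i\in A}\Z$ automatically contains the conductor of $\Z[(a_i)_{i\in A}]$, hence the possibly smaller ideal $\prod_{i\in A}\chi'(a_i)\Z$ — shows that the number $\widetilde N_A$ of primitive $\Z[(a_i)_{i\in A}]$-submodules of $\prod_{i\in A}\Z/\chi'(a_i)\Z$ equals $\Delta_A\colonequals\prod_{i<j\in A}\abs{a_i-a_j}$.

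Finally, for the fixed-point terms I would argue as follows. Fix $\epsilon\neq 1$ and let $\{1,\dots,n\}=A\sqcup B$ be the induced partition, $A=\{i:\epsilon_i=1\}$, both parts nonempty. Writing $\widetilde R/\ff=C_A\times C_B$ with $C_A=\prod_{i\in A}\Z/\chi'(a_i)\Z$, on which $\epsilon$ acts by $(+1,-1)$, any $\epsilon$-fixed $\overline M$ projects to a primitive $\alpha_A$-stable submodule of $C_A$ and a primitive $\alpha_B$-stable submodule of $C_B$, and by Goursat's lemma $\overline M$ is recovered from this pair together with a gluing isomorphism; $\epsilon$-invariance forces that isomorphism to equal its negative, so the glued quotient is killed by $2$, and $\overline M$ is determined by its projections up to $2^{O(n^2)}$ choices. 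Hence $\abs{\Fix(\epsilon)}\le 2^{O(n^2)}\,\widetilde N_A\widetilde N_B=2^{O(n^2)}\Delta_A\Delta_B$. Now $\Delta=\Delta_A\Delta_B\cdot\prod_{i\in A,\,j\in B}\abs{a_i-a_j}$, and $\Delta\to\infty$ forces $a_n-a_1\to\infty$ (otherwise only finitely many configurations occur up to translation), so the largest gap $g\colonequals\max_{1\le i<n}(a_{i+1}-a_i)\ge(a_n-a_1)/(n-1)$ tends to $\infty$; since any partition $A\sqcup B$ has a cross-pair straddling this gap, $\prod_{i\in A,\,j\in B}\abs{a_i-a_j}\ge g$, whence $\abs{\Fix(\epsilon)}=O_n(\Delta/g)$. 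Summing over the finitely many nontrivial $\epsilon$ yields $\sum_{\epsilon\neq1}\abs{\Fix(\epsilon)}=O_n(\Delta/g)=o(\Delta)$, so $\abs{\ICM(R)}\cdot 2^{n-1}=\Delta+o(\Delta)$ and the limit equals $1$.
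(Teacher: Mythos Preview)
Your framework is correct and leads to the same Burnside structure as the paper, but via a genuinely different route. The paper uses the Latimer--MacDuffee bijection with $\Z$-conjugacy classes of matrices: every matrix with characteristic polynomial $\chi$ is $\Z$-conjugate to an upper-triangular one in a set $\Omega_0$ parametrized explicitly by integers $0\le c_{ij}<a_j-a_i$, so $\abs{\Omega_0}=\Delta$ falls out immediately; the group $D_n\cong\{\pm1\}^n$ acts on these orbits, and the fixed-point bound $\abs{\Fix(P_J)}\le 2^{\abs{J}(n-\abs{J})}\Delta_J\Delta_{J^c}$ is obtained by tracking sign changes of entries under conjugation. You instead work directly with primitive $R$-sublattices of the normalization, reduce modulo the conductor, and invoke Goursat's lemma for the fixed-point estimate. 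The two approaches meet at the same inequality $\abs{\Fix(\epsilon)}=O_n(\Delta_A\Delta_B)$ and the same endgame.

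The substantive gap is the identity $\abs{\cP}=\Delta$. You correctly flag it as ``the main obstacle'', but your proposed proof---localize, then induct by peeling one root via the surjection $\Z_p[x]/(\chi)\to\Z_p[x]/(\chi/(x-a_n))$ and Goursat---is not actually carried out, and the ``delicate point'' you name (tracking primitivity through the recursion) is exactly where the argument would need real work. The paper sidesteps this entirely: the matrix normal form $\Omega_0$ \emph{is} the count, with no induction and no local analysis. So while your approach is conceptually cleaner and closer to the ideal-theoretic language of the statement, it trades an elementary explicit computation for a lemma you have not proved. If you want to complete your route, one option is to note that your $\cP$ is in natural bijection with $\overline\Omega$ (choose a $\Z$-basis of each primitive $M$; the action of $\alpha$ gives a matrix in $\Omega$ up to $U_n$-conjugacy, and the $\{\pm1\}^{n-1}$-actions match), which imports the paper's Proposition~3.4 directly. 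Once $\abs{\cP}=\Delta$ is established, your Goursat/2-torsion bound and the gap-straddling argument for $\prod_{i\in A,\,j\in B}\abs{a_i-a_j}\ge g$ are correct and finish the proof.
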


To prove Theorem \ref{thmMain3} we use the well-known bijection between $\ICM(R)$ and the conjugacy classes of matrices in $\Mat_n(\Z)$ 
with characteristic polynomial $\chi(x)$; see \cite{LM}.   
Using this bijection, we also compute the structure of the group $\Cl(R)$ for $n=2,3$. We state a somewhat simplified version of our result (the actual theorems 
in Section \ref{sExamples} make no assumptions except that $n=2, 3$):  

\begin{thm}\hfill
		\begin{itemize} 
			\item[(i)] Assume $n=2$ and $(a_2-a_1)\geq 3$. The group $\Cl(R)$ is isomorphic to the quotient of the group $(\Z/(a_2-a_1))^\times$ by the subgroup $\{\pm 1\}$.   
			\item[(ii)] Assume $n=3$, $a_3-a_1\geq 4$, and $\Delta_1 = 1$. The group 
			$\Cl(R)$ is isomorphic to the quotient of the group
			\[
			\left ( \Z/(a_2 - a_1) \right)^\times \times \left ( \Z/(a_3 - a_2) \right )^\times \times \left ( \Z/(a_3 - a_1) \right )^\times
			\]
			by the subgroup $\left\{ (1, 1, 1), (1, -1, -1), (-1, 1, -1), (-1, -1, 1) \right \}.$
			\end{itemize}
\end{thm}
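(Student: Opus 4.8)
The plan is to use the identification $\Cl(R)=\Pic(R)$ — a class in $\ICM(R)$ is a unit precisely when the ideal is invertible — and to compute $\Pic(R)$ from the conductor square relating $R$ to its normalization $\widetilde R=\prod_{i=1}^{n}\Z$. Write $\alpha$ for the image of $x$ in $R$, so that $\alpha=(a_1,\dots,a_n)\in\widetilde R$ and $R=\Z[\alpha]$. The first step is to determine the conductor $\fc=(R:\widetilde R)=\{r\in\widetilde R: r\widetilde R\subseteq R\}$. Dualizing with respect to the trace form gives $\fc=(\widetilde R^{\vee}:R^{\vee})$; the trace form of $K=\prod_i\Q$ is $\langle x,y\rangle=\sum_i x_iy_i$, so $\widetilde R$ is self-dual, while Dedekind's formula for the monogenic ring $R=\Z[\alpha]$ gives $R^{\vee}=\chi'(\alpha)^{-1}R$; and $(\widetilde R:R)=\widetilde R$ since $1\in R\subseteq\widetilde R$ and $\widetilde R$ is a ring. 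Combining these,
\[
\fc=\chi'(\alpha)\,\widetilde R=\prod_{i=1}^{n}\chi'(a_i)\,\Z,\qquad \chi'(a_i)=\prod_{j\neq i}(a_i-a_j).
\]
Now $R,\widetilde R,R/\fc,\widetilde R/\fc$ form a Milnor (conductor) square; since $\Pic(\widetilde R)=\Pic(\Z)^{n}=0$ and $\Pic(R/\fc)=0$ (a finite commutative ring is a product of local rings, each with trivial Picard group), the associated units--Picard exact sequence reduces to
\[
\widetilde R^{\times}\times(R/\fc)^{\times}\longrightarrow(\widetilde R/\fc)^{\times}\longrightarrow\Pic(R)\longrightarrow 0,
\]
so $\Cl(R)$ is $(\widetilde R/\fc)^{\times}$ modulo the subgroup generated by the images of $\widetilde R^{\times}=\{\pm1\}^{n}$ and of $(R/\fc)^{\times}$.

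For $n=2$, put $d=a_2-a_1$. Then $\fc=d\Z\times d\Z$, so $(\widetilde R/\fc)^{\times}=(\Z/(d))^{\times}\times(\Z/(d))^{\times}$; the subring $R/\fc$ is the diagonal $\Z/(d)$, hence $(R/\fc)^{\times}$ is the diagonal subgroup, while $\widetilde R^{\times}$ maps onto $\{\pm1\}\times\{\pm1\}$. The homomorphism $(u,v)\mapsto uv^{-1}$ has kernel the diagonal and carries $\{\pm1\}\times\{\pm1\}$ onto $\{\pm1\}$, whence $\Pic(R)\cong(\Z/(d))^{\times}/\{\pm1\}$; the hypothesis $d\ge3$ only ensures $-1\neq1$, giving (i).

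For $n=3$, put $d_{12}=a_2-a_1$, $d_{23}=a_3-a_2$, $d_{13}=a_3-a_1=d_{12}+d_{23}$. The hypothesis $\Delta_1=\gcd(d_{12},d_{23})=1$ makes $d_{12},d_{23},d_{13}$ pairwise coprime, and since $|\chi'(a_1)|=d_{12}d_{13}$, $|\chi'(a_2)|=d_{12}d_{23}$, $|\chi'(a_3)|=d_{13}d_{23}$, the Chinese Remainder Theorem splits $\widetilde R/\fc$ into six cyclic pieces: for each pair $\{i,j\}$ there are two copies of $\Z/(d_{ij})$, arising from the two places of $\widetilde R$ indexed by $i$ and by $j$. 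I would then locate $R/\fc$ inside $\widetilde R/\fc$: for $f\in\Z[x]$ the two $\Z/(d_{ij})$-components of $(f(a_1),f(a_2),f(a_3))\bmod\fc$ are $f(a_i)\bmod d_{ij}$ and $f(a_j)\bmod d_{ij}$, which agree because $a_i\equiv a_j\pmod{d_{ij}}$; hence $R/\fc$ lies in the ``pair-diagonal'' subring $\cong\Z/(d_{12})\times\Z/(d_{23})\times\Z/(d_{13})$, and a count of orders — $[\widetilde R:\fc]=(d_{12}d_{23}d_{13})^{2}$ while $[\widetilde R:R]=\abs{\Delta}=d_{12}d_{23}d_{13}$ by the Vandermonde determinant — forces equality. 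Quotienting $(\widetilde R/\fc)^{\times}$ by $(R/\fc)^{\times}$ therefore lands in $(\Z/(d_{12}))^{\times}\times(\Z/(d_{23}))^{\times}\times(\Z/(d_{13}))^{\times}$ via the ``ratio within each pair'' map (numerator the lower-indexed place); tracking a tuple $(\eps_1,\eps_2,\eps_3)\in\{\pm1\}^3$ through this, its components in the three factors are $\eps_1\eps_2$, $\eps_2\eps_3$, $\eps_1\eps_3$, and running over all eight sign choices the image is exactly $\{(1,1,1),(1,-1,-1),(-1,1,-1),(-1,-1,1)\}$. This yields (ii); as a check, this subgroup has order $4$ when $a_3-a_1\ge4$, and $\varphi(d_{12})\varphi(d_{23})\varphi(d_{13})/4$ agrees with $\abs{\Cl(R)}$ from Theorem \ref{thmMain1} when $\Delta_1=1$.

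There is no single deep step here; the work is in the bookkeeping, and it concentrates in two places. First, one must be sure $R/\fc$ is \emph{exactly} the pair-diagonal subring and not just contained in it — the cleanest argument is the Vandermonde index count above, but one should double-check that the congruences impose nothing further. Second — and this is where the hypothesis $\Delta_1=1$ genuinely bites in the $n=3$ case — pairwise coprimality of the three gaps is precisely what lets the Chinese Remainder Theorem untangle $(\widetilde R/\fc)^{\times}$ into a product indexed by pairs; when $\Delta_1>1$ this decomposition fails and the structure is more intricate, which is presumably why the unconditional version is left to Section \ref{sExamples}. The small-gap hypotheses $a_2-a_1\ge3$ and $a_3-a_1\ge4$ carry only a little case-checking, excluding the degenerate situations where $\fc=\widetilde R$ or where the $\{\pm1\}$ quotient collapses. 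One can also run the whole argument through the Latimer--MacDuffee correspondence \cite{LM} between $\ICM(R)$ and the $\GL_n(\Z)$-conjugacy classes of integer matrices with characteristic polynomial $\chi$, which has the merit of handling the full monoid simultaneously.
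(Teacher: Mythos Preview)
Your argument is correct and takes a genuinely different route from the paper. The paper works entirely through the Latimer--MacDuffee correspondence: for $n=2$ it writes down explicit ideal representatives $(x-b,u)$, computes their products by hand, and reads off the group law; for $n=3$ it builds a homomorphism $\psi\colon G\to\ICM(R)$ from a larger group $G=\bigl(\Z/(b-a)(c-a)\bigr)^\times\times\bigl(\Z/(c-b)(c-a)\bigr)^\times$ by associating to $(u,v)$ the ideal coming from a lower-triangular matrix, determines $\ker\psi$ by solving the matrix conjugacy equations explicitly, and then --- crucially --- invokes the analytic class-number formula of Theorem~\ref{thmMain1} to force $\psi$ to be surjective, before passing to the simpler quotient when $\Delta_1=1$. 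Your conductor-square approach bypasses all of this: the Mayer--Vietoris sequence for $\Pic$ gives $\Cl(R)$ directly as $(\widetilde R/\fc)^\times$ modulo the images of $\widetilde R^\times$ and $(R/\fc)^\times$, and the identification $\fc=\chi'(\alpha)\widetilde R$ together with the Vandermonde index count does the rest. Your proof is more conceptual and, notably, self-contained --- it does not lean on the Jordan--Poonen formula at all. What the paper's route buys, on the other hand, is an explicit list of ideal (and matrix) representatives for the classes, and a uniform treatment of the unrestricted $n=3$ case: when $\Delta_1>1$ your CRT decomposition of $\widetilde R/\fc$ into pair-indexed pieces breaks down, whereas the paper's Theorem~\ref{thm_clr} still describes $\Cl(R)$ as $G/HK$ with an explicit (if messier) subgroup $K$.
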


The paper is organized as follows. In Section \ref{sPrelim}, for a separable monic polynomial $\chi(x)$ of degree $n$,
we recall the definition of a fractional ideal of $R=\Z[x]/(\chi(x))$ and the 
 ideal class monoid $\ICM(R)$. We also recall the bijection between $\ICM(R)$ 
 and the conjugacy classes of matrices in $\Mat_n(\Z)$ with characteristic polynomial $\chi(x)$ due to Latimer and MacDuffee. 
 In Section \ref{sICM}, assuming all the roots of $\chi(x)$ are in $\Z$, we study $\ICM(R)$ using the conjugacy classes of matrices in  
 $\Mat_n(\Z)$; here we prove Theorem \ref{thmMain3}. In Section \ref{sICG}, we recall the Class Number Formula of Jordan and Poonen, 
 and prove Theorems \ref{thmMain1} and \ref{thmMain2}. In Section \ref{sExamples}, we describe $\Cl(R)$ as an abelian group  
 for $n=2,3$, using a combination of computations with matrices and ideals.

\subsection*{Acknowledgments} 
The authors are grateful to the anonymous referee for a careful reading of the manuscript and for many valuable suggestions. 
This work was carried out as part of a project within the Honors Program in Mathematics of the Armenian Society of Fellows (ASOF). 
We thank ASOF for organizing the program and for its financial support.


\section{Preliminaries}\label{sPrelim}  

For a discussion of $\Z$-orders in number fields and their class groups, the reader might consult \cite{Neukirch} and \cite{Cohen}.  
A discussion of these concepts in products of number fields, as well as computational aspects of the theory, can be found in \cite{Marseglia}. 

Let $\chi(x)\in \Z[x]$ be a monic separable polynomial of degree $n$, i.e., a polynomial without repeated roots in $\C$. 
Let $R=\Z[x]/(\chi(x))$ be the quotient of the polynomial ring $\Z[x]$ by the ideal generated by $\chi(x)$, and let 
$K=\Q\otimes_\Z R \cong \Q[x]/(\chi(x))$. 
Then $R$ is naturally a subring of $K$.  
Because $\chi(x)$ is separable, it decomposes as a product $f_1(x)\cdots f_s(x)$ of monic polynomials in $\Z[x]$, where  $f_i\neq f_j$ for $i\neq j$ and 
each $f_i(x)$ is irreducible in $\Q[x]$. The Chinese Remainder Theorem 
implies that 
$$
K\cong \prod_{i=1}^s \Q[x]/(f_i(x)) \cong \prod_{i=1}^s K_i
$$ 
is a product of finite field extensions of $\Q$, so $K$ is an \'etale $\Q$-algebra of dimension $n$. 
On the other hand, generally $R$ does not have a similar splitting, i.e., $R$ is not isomorphic to the direct product 
$R_1\times \cdots \times R_s$ of $\Z$-orders $R_i\subset K_i$, $i=1, \dots, s$.

	A \textit{fractional $R$-ideal} in $K$ is an $R$-submodule $I\subset K$ which has rank $n$ as a $\Z$-module. Given 
	two fractional $R$-ideals $I$ and $J$, we can form their product 
	$$I\cdot J=\{i_1j_1+\cdots+i_mj_m\mid m\geq 1, i_1, \dots, i_m\in I, j_1, \dots, j_m\in J\}.$$ This is again a fractional $R$-ideal. 
    Denote the monoid of fractional $R$-ideals by $\cI(R)$. Two fractional $R$-ideals $I$ and $J$ are (linearly) \textit{equivalent}, denoted $I\sim J$, if $I=kJ$ for some $k\in K^\times$, where $K^\times$ denotes the group of units of $K$.  
    The \textit{ideal class monoid} of $R$, denoted $\ICM(R)$, is 
    the monoid of equivalence classes of fractional $R$-ideals. 
    A fractional ideal of the form $I=\alpha R$, $\alpha\in K^\times$, is called \textit{principal}.  
The group $\cP(R)$ of principal fractional $R$-ideals acts by multiplication on $\cI(R)$ and 
    $$\ICM(R)=\cI(R)/\cP(R).$$ 
    
    Given two fractional $R$-ideals $I$ and $J$, their \textit{ideal quotient} is   
    $$(I:J)=\{k\in K\mid k J\subseteq I\}.$$ 
	A fractional $R$-ideal $I$ is said to be \textit{invertible} if $IJ=R$ for some 
    fractional $R$-ideal $J$. Note that if such $J$ exists, then $J=(R:I)$.  
    Generally not every fractional $R$-ideal is invertible, so $\ICM(R)$ is not a group. The \textit{ideal class group} of $R$ is the subgroup $\Cl(R)$ of $\ICM(R)$ consisting of equivalence classes of invertible fractional $R$-ideals.

    \begin{rem}\label{remGorenstein}
It is easy to check that if $I$ is an invertible fractional $R$-ideal, then $(I:I)=R$; see Lemma 2.5 in \cite{Marseglia}. Moreover, since our $R$ is a Gorenstein ring, this is also a sufficient condition, i.e., $I$ is an invertible fractional $R$-ideal if and only if $(I:I)=R$; see Proposition 2.11 and Corollary 2.12 in \cite{Marseglia}. 
    \end{rem}

\begin{example}
	Let $$R=\Z[x]/(x(x-p))\cong \{(m, n)\in \Z\times \Z\mid m\equiv n\Mod{p}\},$$ where $p$ is prime. Let $I\lhd R$ be the ideal generated by $\{(p, 0), (0, p)\}$. Note that $R/I\cong \F_p$, so $I$ is maximal. Next, for an arbitrary $(m,n)\in \Z\times \Z$, we have $(m, n)I\subseteq I$, so $(I:I)=\Z\times \Z$, which is strictly larger than $R$. By Remark \ref{remGorenstein}, $I$ is not invertible. 
\end{example}

\begin{defn}
	Two $n\times n$ matrices $S_1, S_2\in \Mat_n(\Z)$ are \textit{conjugate} over $\Z$ (or \textit{$\Z$-conjugate}) if there exists $T\in \GL_n(\Z)$ such that $S_2=TS_1T^{-1}$. 
	Matrices which are conjugate over $\Z$ have the same characteristic and minimal polynomials; the converse is not true in general even when the matrices are conjugate over $\Q$. 
\end{defn}


There is a bijection between $\ICM(R)$ and the set of $\Z$-conjugacy classes of matrices with characteristic polynomial $\chi(x)$.  
The following lemma will be used in the proof of that bijection. 

\begin{lem}\label{lemModICM} 
We say that an $R$-module $M$ is of \textbf{rank 1} if $M\cong \Z^n$ as a $\Z$-module and $M\otimes_\Z \Q\cong K$ as a $K$-module. 
We have:
	\begin{enumerate}
		\item Any $R$-module of rank $1$ is isomorphic to a fractional ideal of $K$. 
		\item Two $R$-modules of rank $1$ are isomorphic if and only if their corresponding fractional ideals are equivalent. 
	\end{enumerate}
\end{lem}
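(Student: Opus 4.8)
The plan is to exploit the fact that $K=\Q\otimes_\Z R$ is an étale $\Q$-algebra and that $R$ acts on any rank-$1$ module $M$ through a ring homomorphism into $\End_\Z(M)\cong\Mat_n(\Z)$. For part (1), I would argue as follows. Let $M$ be an $R$-module with $M\cong\Z^n$ as a $\Z$-module and $M\otimes_\Z\Q\cong K$ as a $K$-module. The $R$-action extends to a $K$-action on $V\colonequals M\otimes_\Z\Q$, and by hypothesis this makes $V$ a free $K$-module of rank $1$; fix a $K$-module isomorphism $\psi\colon V\xrightarrow{\ \sim\ } K$. Then $\psi(M)\subseteq K$ is an $R$-submodule of $K$ which, being the image of $M$, is a finitely generated $\Z$-module of rank $n$, hence a fractional $R$-ideal in $K$ (here one uses that $M$ spans $V$ over $\Q$, so $\psi(M)$ spans $K$). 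By construction $\psi$ restricts to an $R$-module isomorphism $M\xrightarrow{\sim}\psi(M)$, which proves (1).

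For part (2), suppose first that fractional $R$-ideals $I,J$ are equivalent, say $J=kI$ with $k\in K^\times$. Then multiplication by $k$ is an $R$-module isomorphism $I\to J$, so the corresponding rank-$1$ modules are isomorphic. Conversely, suppose $\theta\colon I\to J$ is an $R$-module isomorphism of fractional ideals. Tensoring with $\Q$ gives a $K$-module automorphism $\theta_\Q\colon I\otimes_\Z\Q\to J\otimes_\Z\Q$, i.e., a $K$-linear map $K\to K$; every such map is multiplication by some element $k=\theta_\Q(1)\in K$, and since $\theta_\Q$ is bijective, $k\in K^\times$. Then $\theta(v)=kv$ for all $v\in I$, so $J=\theta(I)=kI$, i.e., $I\sim J$.

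The only real subtlety I anticipate is making sure, in part (1), that $\psi(M)$ genuinely has $\Z$-rank $n$ (equivalently, that it is a full-rank lattice in $K$) rather than something degenerate: this is where the hypothesis $M\otimes_\Z\Q\cong K$ as a $K$-module — not merely as a $\Q$-module — is essential, since it guarantees $V$ is $1$-dimensional over $K$ and hence $\psi$ is an isomorphism of $\Q$-vector spaces carrying the lattice $M$ to a lattice spanning $K$. Everything else is a routine unwinding of definitions; in particular, in (2) the passage from an abstract $R$-module isomorphism to multiplication by a unit of $K$ uses only that $\End_K(K)=K$ and that $I,J$ are full lattices so that the $\Q$-linear extension is determined by its values on $I$.
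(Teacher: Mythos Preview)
Your proposal is correct and follows essentially the same approach as the paper's proof: embed $M$ into $K$ via the $K$-module isomorphism $M\otimes_\Z\Q\cong K$ for part (1), and for part (2) extend an $R$-module isomorphism $\Q$-linearly to a $K$-linear automorphism of $K$, which must be multiplication by a unit. Your write-up is slightly more explicit (e.g., naming the isomorphism $\psi$ and identifying $k=\theta_\Q(1)$), but the underlying argument is identical.
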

\begin{proof}
	(1) Let $M$ be an $R$-module of rank $1$. By definition, $V\colonequals M\otimes_\Z \Q\cong K$. 
	Since $M$  is an $R$-submodule of $V$, 
	we can identify $M$ with an $R$-submodule of $K$. Since  $M$ has rank $n$ over $\Z$, it is a fractional ideal.  
	
	(2) Suppose $I$ and $J$ are equivalent fractional $R$-ideals of $K$, so $J=k I$ for some $k\in K^\times$. Then $I$ and $J$ are isomorphic $R$-modules of rank $1$, with 
	the isomorphism $\theta\colon I\to J$ given by $i\mapsto ki$.  
	
	Now suppose $I$ and $J$ are $R$-modules of rank $1$ and $\theta\colon I\to J$ is an isomorphism of $R$-modules. The isomorphism $\theta$ extends linearly to an isomorphism $\theta\colon I\otimes_\Z \Q\to J\otimes_\Z \Q$ of $R\otimes_\Z\Q$-modules. Since $I\otimes_\Z \Q\cong J\otimes_\Z \Q\cong R\otimes_\Z \Q=K$, we get a $K$-linear isomorphism $\theta\colon K\to K$. Such an isomorphism is given by multiplication by some $k\in K^\times$. Hence the images of $I$ and $J$ in $K$ satisfy $J=kI$. 
\end{proof}

The next theorem is originally due to Latimer and MacDuffee \cite{LM}, where it is stated and proved in a somewhat 
outdated terminology. A nice exposition of the theorem, with many detailed examples, is given in \cite{Conrad}. (In \cite{Conrad}, it is  
assumed that $\chi(x)$ is irreducible but this assumption is not necessary.) Another proof (of a more general statement) is given in \cite[$\S$8]{Marseglia}. 
We give a proof for the sake of completeness and because this construction will be used in Section \ref{sExamples}. 

\begin{thm}\label{thmLM}
	There is a bijection between $\ICM(R)$ and the set of $\Z$-conjugacy classes of matrices in $\Mat_n(\Z)$ 
	with characteristic polynomial $\chi(x)$. 
\end{thm}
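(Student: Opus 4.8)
The plan is to construct the bijection explicitly in both directions and check that the two constructions are mutually inverse, using Lemma \ref{lemModICM} to translate the problem about ideal classes into one about $R$-modules of rank $1$.

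First I would set up the map from matrices to modules. Given $S\in\Mat_n(\Z)$ with characteristic polynomial $\chi(x)$, the action of $S$ on $\Z^n$ makes $\Z^n$ into a module over $\Z[x]$ with $x$ acting as $S$; since $\chi(S)=0$ by Cayley--Hamilton, this factors through $R=\Z[x]/(\chi(x))$, so $\Z^n$ becomes an $R$-module, call it $M_S$. As a $\Z$-module it is free of rank $n$, and because $\chi(x)$ is separable, $M_S\otimes_\Z\Q\cong\Q^n$ is a faithful $K$-module of $\Q$-dimension $n$, hence isomorphic to $K$; so $M_S$ is an $R$-module of rank $1$ in the sense of Lemma \ref{lemModICM}. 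Conjugate matrices $S_2=TS_1T^{-1}$ give isomorphic $R$-modules (the isomorphism is $T\colon\Z^n\to\Z^n$, which is $\Z[x]$-linear precisely because it intertwines $S_1$ and $S_2$), and conversely an $R$-module isomorphism $M_{S_1}\to M_{S_2}$ is a $\Z$-linear automorphism of $\Z^n$ intertwining the two actions of $x$, i.e. an element of $\GL_n(\Z)$ conjugating $S_1$ to $S_2$. Combined with Lemma \ref{lemModICM}(2), this shows the assignment $[S]\mapsto$ (class of the fractional ideal attached to $M_S$) is well defined and injective on $\Z$-conjugacy classes.

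Next I would check surjectivity. Starting from a fractional $R$-ideal $I$, by Lemma \ref{lemModICM}(1) it is an $R$-module of rank $1$, so in particular $I\cong\Z^n$ as a $\Z$-module: fix a $\Z$-basis $e_1,\dots,e_n$. Multiplication by the image $\alpha$ of $x$ in $K$ sends $I$ into $I$ (since $I$ is an $R$-module), so in this basis it is represented by a matrix $S\in\Mat_n(\Z)$; its characteristic polynomial divides... more precisely, $\alpha$ satisfies $\chi(\alpha)=0$ in $K$, and since $I$ spans $K$ over $\Q$, the $\Q$-linear map ``multiplication by $\alpha$'' on $I\otimes\Q\cong K$ has characteristic polynomial equal to the characteristic polynomial of $\alpha$ acting on $K$, which, because $\chi$ is separable of degree $n=\dim_\Q K$, is exactly $\chi(x)$. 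Hence $S$ has characteristic polynomial $\chi(x)$, and by construction $M_S\cong I$ as $R$-modules, so $[S]$ maps to the class of $I$. A change of $\Z$-basis of $I$ replaces $S$ by a $\GL_n(\Z)$-conjugate, so the preimage class is well defined; this is the inverse of the first map.

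The main obstacle is the bookkeeping around the characteristic polynomial: one must be careful to argue that the matrix of multiplication-by-$\alpha$ on a rank-$1$ module has characteristic polynomial exactly $\chi(x)$ and not merely a polynomial having $\chi$ as a factor. The clean way is to pass to $\otimes\Q$, where $I\otimes\Q\cong K$ and multiplication by $\alpha$ becomes the regular representation of $\alpha\in K$ on $K$; since $K\cong\prod K_i$ and $\alpha$ generates $K$ as a $\Q$-algebra (its minimal polynomial over $\Q$ is the radical of $\chi$, but as $\chi$ is separable the minimal and characteristic polynomials of the regular representation coincide with $\chi$ only after checking $\Q[\alpha]=K$, which holds because $\alpha=$ image of $x$ generates $R$ hence $K$), the characteristic polynomial of the regular representation is $\chi(x)$. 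Everything else is formal once Lemma \ref{lemModICM} is in hand; I would also remark that the bijection is canonical up to the choices involved and note for later use in Section \ref{sExamples} that, concretely, the fractional ideal attached to $S$ can be taken to be the $R$-submodule of $K$ generated by the rows (or columns) of a matrix diagonalizing $S$ over $\prod_i\overline{\Q}$, matching the recipe used there.
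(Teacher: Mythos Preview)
Your argument is correct and follows essentially the same route as the paper: both proofs set up the dictionary between $\Z$-conjugacy classes and isomorphism classes of rank-$1$ $R$-modules via the action of $\bar{x}$ on $\Z^n$, verify the characteristic polynomial by tensoring with $\Q$ and identifying the action with the regular representation of $K$ (equivalently, conjugation to the companion matrix of $\chi$), and then invoke Lemma~\ref{lemModICM}. The only differences are cosmetic---you start from matrices and go to modules while the paper starts from modules---and your closing remark is slightly off: in Section~\ref{sExamples} the paper does not diagonalize $S$, but rather conjugates it over $\Q$ to the companion matrix of $\chi$ and reads the ideal generators from the columns of the conjugating matrix.
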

\begin{proof} 
	Denote by $\bar{x}$ the image of $x\in \Z[x]$ in the quotient ring $R=\Z[x]/(\chi(x))$, so that $R$ is the free $\Z$-module 
	$R=\Z+\Z \bar{x}+\cdots +\Z \bar{x}^{n-1}$ with the ring structure determined by  $\chi(\bar{x})=0$. 
	
	Let $M$ be an $R$-module of rank $1$.  Considering $M\cong \Z^n$ as a free $\Z$-module, 
	the action of $\bar{x}$ corresponds to multiplication by some matrix $S\in \Mat_n(\Z)$ on column vectors in $\Z^n$. 
	We claim that this matrix has characteristic polynomial $\chi(x)$.   
	Indeed, by extending the scalars to $\Q$, we obtain a vector space $V=M\otimes_\Z \Q\cong \Q^n$ equipped with an action of $K=\Q[\bar{x}]$, where 
	$\bar{x}$ acts on $\Q^n$ by multiplication by the same $S$. Since $V$ is a free $K$-module of rank $1$, 
	after possibly choosing a different basis of $V$, this module is $K$ with the action of $\bar{x}$ being the ring multiplication. 
	This implies that  $S$ is conjugate to the companion matrix of $\chi(x)$, so $S$ has characteristic polynomial $\chi(x)$. 
	
Let $\cM(R)$ be the set of isomorphism classes of $R$-modules of rank $1$, and let $\cC(\chi)$ be the set 
	of $\Z$-conjugacy classes of matrices in $\Mat_n(\Z)$ with characteristic polynomial $\chi(x)$. 
	Let $M_1$ and $M_2$ be two $R$-modules of rank $1$. 
	By fixing a basis of $M_1$ as a free $\Z$-module,  we obtain an isomorphism 
	$\iota_1\colon M_1\overset{\sim}{\to}\Z^n$. By the previous paragraph, the multiplication by $\bar{x}$ 
	on $M_1$ corresponds to multiplication by a matrix $S_1\in \Mat_n(\Z)$ on $\Z^n$ with characteristic polynomial $\chi(x)$. 
	Similarly, fixing a basis of $M_2$ as a $\Z$-module, i.e., an 
	isomorphism $\iota_2\colon M_2\overset{\sim}{\to} \Z^n$, the multiplication by $\bar{x}$ on $M_2$  becomes the multiplication by a matrix $S_2\in  \Mat_n(\Z)$ on $\Z^n$. 
	Having an $R$-module isomorphism $M_1\to M_2$ is equivalent to having a $\Z$-module isomorphism $\theta\colon M_1\to M_2$ such that 
	all the squares of the diagram below are commutative: 
	$$
	\xymatrix{ \Z^n \ar[d]_-{S_1} & M_1 \ar[l]_-{\iota_1} \ar[r]^-\theta \ar[d]_-{\bar{x}} & M_2 \ar[r]^-{\iota_2} \ar[d]^-{\bar{x}}  & \Z^n \ar[d]^-{S_2}  \\
		\Z^n & M_1 \ar[l]_-{\iota_1} \ar[r]^-\theta & M_2 \ar[r]^-{\iota_2}  & \Z^n}
	$$
	The composition $\iota_2\theta\iota_1^{-1}\colon \Z^n\to \Z^n$ corresponds to multiplication by an invertible matrix $T\in \GL_n(\Z)$ such that 
	$TS_1=S_2T$. We conclude that $M_1$ and $M_2$ are isomorphic $R$-modules if and only if $S_1$ and $S_2$ 
	are $\Z$-conjugate. Thus, we have an injective map $\cM(R)\to \cC(R)$ induced by $M\mapsto (\Z^n, S)$. 
	Now suppose $S\in \Mat_n(\Z)$ has characteristic polynomial $\chi(x)$. 
	Because $\chi(x)$ is separable, the minimal polynomial of $S$ is also $\chi(x)$, so the $\Z$-algebra $\Z[S]$ is isomorphic to $R$. Thus, 
	$\Z^n$ acquires a structure of an $R$-module of rank $1$, with $\bar{x}$ acting as $S$. We conclude that the map $\cM(R)\to  \cC(R)$ is also surjective.  
	Finally, because there is a bijection between $\ICM(R)$ and $\cM(R)$ by Lemma \ref{lemModICM}, the theorem follows. 
\end{proof}


\section{Ideal Class Monoid} \label{sICM} 

As in the introduction, let $n\geq 2$ be an integer, let  $a_1<a_2<\cdots<a_n$ be distinct integers, and put 
$$
\chi(x)= (x-a_1)(x-a_2)\cdots (x-a_n) \in \Z[x]. 
$$
Let $R=\Z[x]/(\chi(x))$. We want to compute the order of $\ICM(R)$. 
By Theorem \ref{thmLM}, it is enough to compute the number of $\Z$-conjugacy classes of matrices in $\Mat_n(\Z)$ with 
characteristic polynomial $\chi(x)$. To carry out this computation, we need some preliminary lemmas:

\begin{lem}\label{lemBasis}
	If the greatest common divisor of $c_1, \dots, c_n\in \Z$ is $1$, then the column vector $v=(c_1, \dots, c_n)$ 
	can be taken as part of a $\Z$-basis of $\Z^n$. 
\end{lem}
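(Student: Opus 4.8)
The plan is to deduce this from two standard facts: Bézout's identity, and the principle that a subgroup of $\Z^n$ with torsion-free quotient is a direct summand. First I would use $\gcd(c_1,\dots,c_n)=1$ to produce integers $b_1,\dots,b_n$ with $b_1c_1+\cdots+b_nc_n=1$. These define a $\Z$-linear functional $\phi\colon\Z^n\to\Z$ sending the $i$-th standard basis vector to $b_i$; by construction $\phi(v)=1$, so $\phi$ is surjective and the assignment $1\mapsto v$ splits the short exact sequence
\[
0\longrightarrow \ker\phi \longrightarrow \Z^n \overset{\phi}{\longrightarrow} \Z \longrightarrow 0,
\]
giving a direct sum decomposition $\Z^n=\ker\phi\oplus\Z v$.

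Next I would observe that $\ker\phi$, being a subgroup of the free abelian group $\Z^n$, is itself free abelian of finite rank, and comparing ranks across the direct sum decomposition forces $\rank(\ker\phi)=n-1$. Choosing any $\Z$-basis $w_1,\dots,w_{n-1}$ of $\ker\phi$, the set $\{v,w_1,\dots,w_{n-1}\}$ is a $\Z$-basis of $\Z^n$, which is exactly the assertion of the lemma.

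As a more hands-on alternative, one can induct on $N\colonequals\abs{c_1}+\cdots+\abs{c_n}$. If $N=1$ then $v=\pm e_i$ for some $i$ and the claim is immediate. Otherwise at least two coordinates are nonzero, say $c_i$ and $c_j$ with $\abs{c_i}\geq\abs{c_j}>0$; replacing $c_i$ by $c_i-\sgn(c_ic_j)c_j$ corresponds to acting on $v$ by an element of $\GL_n(\Z)$ which preserves the gcd of the coordinates and strictly decreases $N$. The inductive hypothesis then supplies a $\Z$-basis containing the transformed vector, which one transports back along that element of $\GL_n(\Z)$.

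I do not expect a genuine obstacle here; the only point needing a word of care is the passage from the splitting to an honest basis, where one invokes the structure theorem that a subgroup of a finitely generated free abelian group is free, together with additivity of rank under direct sums, so that a basis of $\ker\phi$ has precisely $n-1$ elements and, adjoined to $v$, freely generates $\Z^n$. Everything else follows directly from Bézout's identity.
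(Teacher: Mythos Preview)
Your argument is correct. Both you and the paper prove the lemma by exhibiting $\Z v$ as a direct summand of $\Z^n$ via a splitting of a short exact sequence, but you run the sequence in the opposite direction: the paper considers $0\to\Z v\to\Z^n\to\Z^n/\Z v\to 0$, shows the quotient is torsion-free (hence free, hence projective) using $\gcd(c_1,\dots,c_n)=1$, and invokes projectivity to split; you instead use B\'ezout to build a surjection $\phi\colon\Z^n\to\Z$ with $\phi(v)=1$, so $1\mapsto v$ is an explicit section of $0\to\ker\phi\to\Z^n\to\Z\to 0$. Your route is slightly more constructive, since the splitting is written down rather than deduced from an abstract lifting property, though both ultimately appeal to the same structural fact (a subgroup of a finitely generated free abelian group is free) to produce the remaining $n-1$ basis vectors. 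Your alternative inductive argument via elementary column operations is genuinely more elementary---it avoids any appeal to the structure theorem and amounts to putting $v$ in Smith normal form---and would serve equally well here.
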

\begin{proof} The quotient $\Z^n/\Z v$ is a torsion-free $\Z$-module since $\gcd(c_1, \dots, c_n)=1$. Hence, $\Z^n/\Z v\cong \Z^{n-1}$ is a free 
	$\Z$-module of rank $n-1$. Since free modules are projective, the exact sequence $0\to \Z v \to \Z^n\to \Z^{n-1}\to 0$ splits (see \cite[p. 389]{DF}), so 
	$\Z^n\cong \Z v \oplus \Z^{n-1}$. Thus, $v$ is part of a basis of $\Z^n$. 
\end{proof}

\begin{lem}\label{lemUpTri}
If $A\in \Mat_n(\Z)$ is a matrix with characteristic polynomial $\chi_A(x)=\chi(x)$, then $A$ is conjugate over $\Z$ 
to an upper triangular matrix of the form 
$$
\begin{bmatrix} 
	a_1 & *  & \dots & * \\ 
	 & a_2  &\dots & * \\ 
	  & & \ddots & \vdots\\ 
	 & & & a_n 
\end{bmatrix}. 
$$
\end{lem}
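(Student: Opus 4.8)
The plan is to argue by induction on $n$, formulating the statement for an arbitrary monic $\chi(x)\in\Z[x]$ that factors as a product of distinct linear terms over $\Z$, with roots listed in increasing order; the case $n=1$ is immediate. For the inductive step, the crucial construction is a \emph{primitive} integer eigenvector of $A$ for the eigenvalue $a_1$, that is, a vector $v\in\Z^n$ with $(A-a_1 I)v=0$ whose coordinates have $\gcd$ equal to $1$. Since $a_1$ is a simple root of $\chi_A(x)=\chi(x)$, the matrix $A$ is diagonalizable over $\Q$ and the eigenspace $\ker(A-a_1 I)\subseteq\Q^n$ is one-dimensional; choosing any nonzero vector in it, clearing denominators, and dividing by the $\gcd$ of the coordinates produces such a $v$.

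Next I would invoke Lemma \ref{lemBasis} to extend $v$ to a $\Z$-basis $v=v_1,v_2,\dots,v_n$ of $\Z^n$, and let $T\in\GL_n(\Z)$ be the matrix whose columns are $v_1,\dots,v_n$. Since $Av_1=a_1 v_1$, the first column of $T^{-1}AT$ is $a_1 e_1$, so
$$
T^{-1}AT=\begin{bmatrix} a_1 & w \\ 0 & A' \end{bmatrix}
$$
for some row vector $w$ and some $A'\in\Mat_{n-1}(\Z)$; comparing characteristic polynomials gives $\chi_{A'}(x)=(x-a_2)\cdots(x-a_n)$. By the inductive hypothesis there is $U\in\GL_{n-1}(\Z)$ such that $UA'U^{-1}$ is upper triangular with diagonal $a_2,\dots,a_n$. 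Conjugating $T^{-1}AT$ by $\diag(1,U)\in\GL_n(\Z)$ leaves the $(1,1)$-entry equal to $a_1$ and replaces the lower-right block by $UA'U^{-1}$, yielding an upper triangular matrix with diagonal $a_1,a_2,\dots,a_n$. Composing the two $\Z$-conjugations proves the claim.

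The only point that needs care is the very first one: one must know that $\ker(A-a_1 I)$ is exactly one-dimensional over $\Q$, and this is precisely where separability of $\chi$ (equivalently, the distinctness of the $a_i$) is used. Without it, a primitive generator of a would-be eigenline need not exist, and, more importantly, the recursion could fail to strip off the eigenvalues with the prescribed multiplicities and ordering. Everything else is routine linear algebra over the PID $\Z$.
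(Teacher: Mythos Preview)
Your proof is correct and follows essentially the same approach as the paper: both argue by induction on $n$, produce a primitive integer eigenvector for $a_1$, invoke Lemma~\ref{lemBasis} to extend it to a $\Z$-basis, and then apply the inductive hypothesis to the lower-right $(n-1)\times(n-1)$ block after conjugating by $\diag(1,U)$. One small remark: your final paragraph overstates the role of one-dimensionality of the eigenspace---all you actually need is that $\ker(A-a_1 I)$ is nonzero (so a primitive integer vector exists) and that $\chi_{A'}(x)=(x-a_2)\cdots(x-a_n)$ again has distinct integer roots so that the inductive hypothesis applies.
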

\begin{proof} We use induction on $n$.  
	Consider $A$ as a linear transformation of $V=\Q^n$. Because $a_1\in \Q$ is an eigenvalue of $A$, there is $0\neq v\in \Q^n$ such that 
	$Av=a_1 v$. After scaling $v$, we may assume that the coordinates of $v=(c_1, \dots, c_n)$ are in $\Z$ and their gcd is $1$. Then, by Lemma \ref{lemBasis}, 
	we can choose a basis $v_1, \dots, v_n$ of $\Z^n$ such that $v=v_1$. With respect to this basis, the linear transformation corresponding to $A$ 
	is given by a matrix of the form 
	$
	A'= \begin{bmatrix} 
		a_1 & *   \\ 
		& A_0 
	\end{bmatrix},$
	where $A_0\in \Mat_{n-1}(\Z)$. 
	The matrix $A$ is conjugate to $A'$ over $\Z$. If $n=2$, this proves the claim of the lemma. If $n\geq 3$, then note that 
the characteristic polynomial of $A'$ is $(x-a_2)\cdots(x-a_n)$. 
	By the induction hypothesis, we can find $B\in \GL_{n-1}(\Z)$ 
	such that $B A_0 B^{-1}$ is upper-triangular, with $a_2, \dots, a_n$ on the main diagonal. 
	Finally, note that $\begin{bmatrix} 1 & \\ &  B\end{bmatrix} A' \begin{bmatrix} 1 & \\ &  B\end{bmatrix}^{-1}$ 
	has the required form. 
\end{proof}

\begin{notn}
Denote the set of upper-triangular matrices in $\Mat_n(\Z)$ of the form given in Lemma \ref{lemUpTri} by $\Omega$. 
Let $\Omega_0\subset \Omega$ be the subset of upper-triangular matrices $(c_{ij})$ such that $0\leq c_{ij}<a_j-a_i$ for all $1\leq i<j\leq n$. 
Note that $\Omega_0$ is finite and 
$$
\abs{\Omega_0} = \Delta= \prod_{i<j}(a_j-a_i). 
$$
Let $G_n$ be the subgroup of $\GL_n(\Z)$ consisting of upper-triangular matrices. Let $U_n \lhd G_n$ be the normal subgroup 
of unipotent matrices, i.e., those matrices which have $1$'s on the main diagonal. Let $D_n\subset G_n$ be the subgroup consisting 
of diagonal matrices. Since the elements of $D_n$ are invertible over $\Z$, we have 
$$
D_n=\{\diag(\pm 1, \dots, \pm 1)\} \cong (\Z/2\Z)^n. 
$$
Note that $G_n=D_nU_n$ and $U_n\cap D_n= \{I_n\}$, so $G_n$ is isomorphic to a semi-direct product $G_n\cong U_n\rtimes D_n$. 
\end{notn}
\begin{lem}
	If $A, B\in \Omega$ and $P\in \GL_n(\Z)$ are such that $PAP^{-1}=B$, then $P\in G_n$. 
\end{lem}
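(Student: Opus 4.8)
The plan is to work over $\Q$ and exploit the fact that, because the diagonal entries $a_1, \dots, a_n$ are distinct integers, every matrix in $\Omega$ is diagonalizable over $\Q$ with one-dimensional eigenspaces, and its eigenspace filtration is forced to coincide with the standard coordinate flag. Once that rigidity is in hand, the conjugating matrix $P$ must preserve the flag, hence be upper triangular.

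First I would record the relevant observation about a single matrix $A \in \Omega$. Since $A$ is upper triangular, the standard flag $V_k = \Span(e_1, \dots, e_k) \subseteq \Q^n$ ($k = 0, 1, \dots, n$) consists of $A$-invariant subspaces, and the operator $A|_{V_k}$ has characteristic polynomial $(x - a_1)\cdots(x - a_k)$ (the top-left $k\times k$ block is upper triangular with that diagonal). As the $a_i$ are distinct, $A|_{V_k}$ is diagonalizable, so $V_k = \bigoplus_{i=1}^{k} E_i$, where $E_i = \ker(A - a_i)$ is the $a_i$-eigenspace of $A$; here each $E_i$ is a line because $a_i$ is a simple root of $\chi$. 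Thus the coordinate flag is intrinsically determined by $A$ as the flag of partial sums of its eigenspaces, ordered by increasing eigenvalue.

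Next, let $B \in \Omega$ and $P \in \GL_n(\Z)$ with $PAP^{-1} = B$, and let $E_i$ and $E_i'$ denote the $a_i$-eigenspaces of $A$ and $B$ respectively. From $BP = PA$ it follows that $P$ maps $E_i$ into $E_i'$, and since $P$ is invertible while both are lines, $P(E_i) = E_i'$ for every $i$. Applying the observation of the previous paragraph to $A$ and to $B$ — which have the same diagonal $a_1, \dots, a_n$ in the same order — gives $V_k = \bigoplus_{i \le k} E_i = \bigoplus_{i \le k} E_i'$. Hence $P(V_k) = P\bigl(\bigoplus_{i\le k} E_i\bigr) = \bigoplus_{i \le k} E_i' = V_k$ for all $k$; in particular $Pe_k \in V_k = \Span(e_1, \dots, e_k)$, so $P$ is upper triangular. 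Together with $P \in \GL_n(\Z)$ this gives $P \in G_n$.

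I do not expect a genuine obstacle here; the one point needing a little care is the identification of the coordinate flag with the eigenspace filtration, which is precisely where the hypothesis that the $a_i$ are distinct (so that each eigenvalue is simple and each $A|_{V_k}$ is diagonalizable) enters. An alternative, more computational route would be an induction on $n$ that peels off $e_1$ using $\ker(A - a_1) = \Q e_1$, but the flag argument is cleaner and makes transparent that only the \emph{ordering} of the diagonal matters, not the size constraint $0 \le c_{ij} < a_j - a_i$ that further cuts out $\Omega_0 \subset \Omega$.
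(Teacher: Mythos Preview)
Your argument is correct. The only delicate step is the claim that $V_k=\bigoplus_{i\le k}E_i$ with $E_i$ the full eigenspace of $A$ on $\Q^n$; you justify this properly, since $V_k$ is $A$-invariant, $A|_{V_k}$ has simple spectrum $\{a_1,\dots,a_k\}$, and each eigenline of $A|_{V_k}$ for $a_i$ is already the whole (one-dimensional) eigenspace $E_i$ of $A$.

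The paper takes exactly the alternative route you sketch at the end: it argues by induction on $n$, applying $PA=BP$ to $e_1$ to see that $Pe_1$ is an $a_1$-eigenvector of $B$, hence a scalar multiple of $e_1$; this forces the block shape $P=\begin{bmatrix} x_1 & * \\ & P_0\end{bmatrix}$ and one recurses on the lower-right $(n-1)\times(n-1)$ blocks. Your flag argument packages all $n$ steps of this induction at once by characterising the standard flag intrinsically as the filtration by partial sums of eigenspaces (in the fixed order $a_1<\cdots<a_n$). The gain is conceptual clarity and a transparent explanation of why only the common ordering of the diagonal matters; the paper's version is a touch more elementary and avoids having to argue that the eigenlines of $A|_{V_k}$ coincide with the global eigenlines $E_i$. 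Either way, the content is the same eigenspace rigidity.
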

\begin{proof}
Let $e_1, e_2, \dots, e_n$ denote the standard basis of $\Z^n$. The conjugation can be written as $PA=BP$. Applying both sides 
of this matrix equation to $e_1$, we get $PA e_1 = a_1Pe_1 = B(P e_1)$. Since $P$ is invertible, $Pe_1\neq 0$ and $Pe_1$ is an eigenvector 
of $B$ with eigenvalue $a_1$. Since all $a_i$'s are distinct, the eigenspace of $B$ for $a_1$ is one-dimensional. Hence $Pe_1$ 
is a multiple of $e_1$, i.e., $Pe_1 = x_1 e_1$. Since $P$ is an integral matrix, $x_1\in \Z$ and $P$ has the form 
$$
P = \begin{bmatrix} x_1 & *   \\ &   P_0 \end{bmatrix},
$$  
where $P_0\in \GL_{n-1}(\Z)$. Now writing $PA = BP$ as 
$$
\begin{bmatrix} x_1 & *   \\ &   P_0 \end{bmatrix} \begin{bmatrix} a_1 & *   \\ &   A_0 \end{bmatrix} =
 \begin{bmatrix} a_1 & *   \\ &   B_0 \end{bmatrix} \begin{bmatrix} x_1 & *   \\ &   P_0 \end{bmatrix}
$$
we see that $P_0A_0 = B_0 P_0$. Hence the claim of the lemma can be deduced by induction on $n$. 
\end{proof}

\begin{prop} \label{propOrbits} \hfill
	\begin{enumerate}
	\item Every matrix in $\Omega$ is conjugate under $U_n$ to a matrix in $\Omega_0$. 
	\item No two matrices in $\Omega_0$ are conjugate under $U_n$. 
	\item The action of $U_n$ on $\Omega$ (by conjugation) has exactly $\Delta$ orbits. 
	\end{enumerate}
\end{prop}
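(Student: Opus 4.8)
The plan is to study the conjugation action of $U_n$ on $\Omega$ \emph{one superdiagonal at a time}. The starting point is an explicit formula for conjugation by an elementary unipotent matrix $E_{ij}(t)=I_n+t\,e_{ij}$ with $i<j$. Since $A\in\Omega$ is upper triangular with distinct diagonal entries, a direct computation (the potential $t^2$-term vanishes because it is proportional to $A_{ji}=0$) gives
\[
E_{ij}(t)\,A\,E_{ij}(t)^{-1}=A+t\Bigl((a_j-a_i)\,e_{ij}+\sum_{l>j}A_{jl}\,e_{il}-\sum_{k<i}A_{ki}\,e_{kj}\Bigr).
\]
The crucial feature is that the $(i,j)$-entry, which sits on the $(j-i)$-th superdiagonal, changes by the multiple $t(a_j-a_i)$ of $a_j-a_i$, while every \emph{other} entry that changes lies on a strictly higher superdiagonal; in particular, among all entries on the $(j-i)$-th superdiagonal only the $(i,j)$-entry is altered. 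All of parts (1)--(3) will be read off from this.

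For part (1) I would induct on superdiagonals. Let $A\in\Omega$. Suppose that, after conjugating by some element of $U_n$, all entries on superdiagonals $1,\dots,d-1$ already lie in the ranges $[0,a_j-a_i)$. For each $i$ with $1\le i\le n-d$ conjugate by $E_{i,i+d}(t_i)$, choosing $t_i\in\Z$ (uniquely) so that the $(i,i+d)$-entry is moved into $\{0,1,\dots,a_{i+d}-a_i-1\}$. By the displayed formula these conjugations, performed in any order, do not disturb superdiagonals $1,\dots,d-1$ and do not interfere with one another on the $d$-th superdiagonal, so afterward superdiagonals $1,\dots,d$ all have canonical entries and we have conjugated by an element of $U_n$. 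Running $d=1,2,\dots,n-1$ lands the matrix in $\Omega_0$.

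For part (2), suppose $A,B\in\Omega_0$ with $B=UAU^{-1}$, $U=(u_{ij})\in U_n$, i.e. $UA=BU$. Reading off the $(i,j)$-entry for $i<j$ and using $A_{ii}=B_{ii}=a_i$ yields
\[
B_{ij}-A_{ij}=u_{ij}(a_j-a_i)+\sum_{i<k<j}\bigl(u_{ik}A_{kj}-B_{ik}u_{kj}\bigr).
\]
I would argue by contradiction: if $U\neq I_n$, let $d\ge1$ be the smallest superdiagonal height carrying a nonzero entry of $U$. For every pair with $j-i=d$, each $u_{ik}$ and $u_{kj}$ in the sum sits on a superdiagonal of height $<d$, hence vanishes by minimality, so $B_{ij}-A_{ij}=u_{ij}(a_j-a_i)$; as $A,B\in\Omega_0$ the left-hand side lies strictly between $-(a_j-a_i)$ and $a_j-a_i$ and is a multiple of $a_j-a_i$, forcing $B_{ij}=A_{ij}$ and $u_{ij}=0$, contradicting the choice of $d$. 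Hence $U=I_n$ and $A=B$. Part (3) is then immediate: by (1) the orbit map $\Omega_0\to\Omega/U_n$ is surjective, by (2) it is injective, so the number of orbits equals $\abs{\Omega_0}=\Delta$.

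No step is a serious obstacle; the only fiddly point is verifying the elementary-conjugation formula and the accompanying bookkeeping — namely that conjugation by $E_{i,i+d}(t)$ touches only the $(i,i+d)$-entry on its own superdiagonal and otherwise moves only strictly higher superdiagonals — which is exactly what makes both the reduction in (1) and the minimality argument in (2) work. Once that is pinned down, the proof is short.
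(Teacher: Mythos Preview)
Your proof is correct. Parts (1) and (3) align with the paper's approach: both use elementary conjugations $I_n+tE_{ij}$ to normalize the strictly upper-triangular entries, differing only cosmetically in the processing order (you sweep superdiagonal by superdiagonal; the paper sweeps row by row from the bottom, left to right). For part (2), however, your argument is organized differently. You read off the entry identity
\[
B_{ij}-A_{ij}=u_{ij}(a_j-a_i)+\sum_{i<k<j}\bigl(u_{ik}A_{kj}-B_{ik}u_{kj}\bigr)
\]
and run a minimality argument on the first superdiagonal of $U$ carrying a nonzero entry, forcing $u_{ij}=0$ there and contradicting minimality. The paper instead inducts on $n$: it applies the inductive hypothesis to the upper-left and lower-right $(n-1)\times(n-1)$ blocks of the equation $PA=BP$ to conclude that $P$ agrees with $I_n$ everywhere except possibly at position $(1,n)$, and then disposes of that entry directly. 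Both proofs are short; yours avoids the induction on $n$ and stays entirely within the fixed matrix size, while the paper's is arguably more hands-on and closer in spirit to the block-reduction used in the surrounding lemmas.
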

\begin{proof}
(1)	For fixed $1\leq i<j\leq n$ and $t\in \Z$, consider the matrix $P=I_n+tE_{ij}\in U_n$, where $E_{ij}$ denotes the 
	$n\times n$-matrix with the $(i, j)$-th entry equal to $1$ and all other entries equal to $0$. Note that $P^{-1}=I_n-tE_{ij}$. 
	Given $A=(a_{sk})\in \Omega$, the matrix $A'=(a_{sk}')=PAP^{-1}$ is the matrix that one obtains by adding $t$ times $j$-th row of $A$ to its $i$-th row 
	and adding $(-t)$ times $i$-th column of $A$ to the $j$-th column. Hence $a_{ij}'=a_{ij}+t(a_j-a_i)$. Moreover, because $P$ 
	is upper-triangular, in $A'$ only the entries above $a_{ij}'$ (in the same column) and to the right of $a_{ij}'$ (in the same row) 
	can be different from the entries of $A$.  Thus, by a sequence of conjugations by appropriate $P$'s, we can change the entries of the 
	$(n-1)$-th row working from left to right, then the entries of the $(n-2)$-th row of the resulting matrix from left to right and so on, eventually 
	obtaining a matrix in $\Omega_0$.  
	
	(2) Suppose $PA=BP$ for some $P=(x_{ij})\in U_n$ and $A=(a_{ij}), B=(b_{ij})\in \Omega_0$. 
	For $n=2$, we have a matrix equation 
	$$\begin{bmatrix} 1 & t \\ & 1\end{bmatrix}\begin{bmatrix} a_1 & a_{12} \\ & a_2\end{bmatrix} =
	\begin{bmatrix} a_1 & b_{12} \\ & a_2\end{bmatrix}\begin{bmatrix} 1 & t \\ & 1\end{bmatrix},
	$$
	which leads to $b_{12}-a_{12}=t(a_2-a_1)$. Since $\abs{b_{12}-a_{12}}< (a_2-a_1)$, we must have $t=0$. For $n\geq 3$, 
	we will use induction to show that $P=I_n$. 
	Let $A_0, B_0, P_0\in U_{n-1}$ denote the upper left submatrices of $A$, $B$ and $P$, respectively. Then  we have $P_0A_0=B_0P_0$. 
	Moreover, the matrices $A_0$ and $B_0$ are in the set analogous to $\Omega_0$ that arises from the polynomial $(x-a_1)\cdots (x-a_{n-1})$. 
	Thus, by induction $P_0=I_{n-1}$. The same argument works for the lower right $(n-1)\times(n-1)$ submatrices of $A, B, P$. 
	It remains to show that $x_{1n}=0$. For this we consider $(1, n)$-th entries of both sides of the matrix equation $PA=BP$: 
	$$
	a_{1n}+a_n x_{1n}=a_1x_{1n}+b_{1n}.
	$$
	The argument that we used for $n=2$ works here too, so we get $x_{1n}=0$. Therefore, $P=I_n$ and $A=B$. 
	
	(3) This immediately follows from (1) and (2). 
\end{proof}

Let $\oOm\colonequals \Omega/U_n$ be the set of orbits of the action of $U_n$ on $\Omega$ by conjugation. 
We proved that $\abs{\oOm} = \abs{\Omega_0}=\Delta$. Since $U_n$ is normal, the group $D_n$ acts on $\oOm$, i.e., $D_n$ 
acts on the set of orbits of $U_n$. We have reduced the problem of computing $\abs{\ICM(R)}$ to computing the number of orbits 
of $D_n$ acting on the finite set $\oOm$. More specifically, by Burnside's Lemma, 
\begin{equation}\label{eqBurnside}
\abs{\ICM(R)} = \frac{1}{2^n}\sum_{P\in D_n} \abs{\Fix(P)},
\end{equation}
where $2^n=\abs{D_n}$ and $\Fix(P)$ is the set of fixed points of $P$ acting on $\oOm$.

Recall that for a positive rational number $\alpha$, the ``floor" function $\lfloor \alpha\rfloor$ is the largest integer $m$ such that $m\leq \alpha$. 

\begin{lem}\label{exampleICM2}
If $n=2$, then 
$$\abs{\ICM(R)} = \left\lfloor \frac{a_2-a_1}{2} \right\rfloor+1. 
$$
\end{lem}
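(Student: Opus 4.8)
The plan is to specialize the Burnside formula \eqref{eqBurnside} to $n=2$, where $D_2=\{\diag(\pm 1,\pm 1)\}$ has order $4$ and, by Proposition \ref{propOrbits}, the set $\oOm=\Omega/U_2$ is in bijection with
$$
\Omega_0=\left\{ \begin{bmatrix} a_1 & c \\ 0 & a_2\end{bmatrix} : 0\le c< a_2-a_1 \right\},
$$
so we identify $\oOm$ with $\Z/(a_2-a_1)\Z$ by sending the class of a matrix with off-diagonal entry $c$ to $c \bmod (a_2-a_1)$. First I would compute the conjugation action of $P=\diag(\epsilon_1,\epsilon_2)$ with $\epsilon_i\in\{\pm 1\}$ on a representative:
$$
P\begin{bmatrix} a_1 & c \\ 0 & a_2\end{bmatrix}P^{-1}=\begin{bmatrix} a_1 & \epsilon_1\epsilon_2 c \\ 0 & a_2\end{bmatrix}.
$$
Reducing $\epsilon_1\epsilon_2 c$ modulo $a_2-a_1$ to return to $\Omega_0$, we see that on $\oOm\cong\Z/(a_2-a_1)\Z$ the element $P$ acts by multiplication by $\epsilon_1\epsilon_2\in\{\pm 1\}$. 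In particular $I_2$ and $-I_2$ act trivially, while $\diag(1,-1)$ and $\diag(-1,1)$ act by $c\mapsto -c$.

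Next I would count fixed points on $\oOm$. The two elements acting trivially each fix all $a_2-a_1$ classes. For each of the two elements acting by negation, the fixed classes are those with $2c\equiv 0 \pmod{a_2-a_1}$, and there are exactly $\gcd(2,a_2-a_1)$ of them, i.e. two if $a_2-a_1$ is even and one if $a_2-a_1$ is odd. Substituting into \eqref{eqBurnside} gives
$$
\abs{\ICM(R)}=\frac{1}{4}\Bigl( 2(a_2-a_1)+2\gcd(2,a_2-a_1)\Bigr)=\frac{a_2-a_1}{2}+\frac{\gcd(2,a_2-a_1)}{2}.
$$
A short check on the parity of $a_2-a_1$ then shows the right-hand side equals $\lfloor (a_2-a_1)/2\rfloor+1$ in both the even and the odd case, completing the proof.

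There is no real obstacle here; the computation is elementary once the bijection $\oOm\leftrightarrow\Z/(a_2-a_1)\Z$ from Proposition \ref{propOrbits} is in place. The only points that require a little care are (a) reading off the $D_2$-action on the \emph{quotient} $\oOm$ rather than on $\Omega$ — which is what forces the reduction of $\epsilon_1\epsilon_2 c$ modulo $a_2-a_1$ — and (b) the final parity bookkeeping that merges the two cases into the single floor-function formula.
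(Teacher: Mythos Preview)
Your proof is correct and follows essentially the same route as the paper: both apply Burnside's formula \eqref{eqBurnside} to the $D_2$-action on $\oOm$, compute that $\pm I_2$ act trivially while $\diag(\pm 1,\mp 1)$ act by $c\mapsto -c$, and count the fixed points of negation. Your packaging via the identification $\oOm\cong\Z/(a_2-a_1)\Z$ and the formula $\gcd(2,a_2-a_1)$ is slightly slicker than the paper's explicit case split, but the argument is the same.
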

\begin{proof}
When $n=2$, 
	$$\Omega_0 = \left\{\begin{bmatrix} a_1 & b \\ & a_2\end{bmatrix}\ \bigg|\  0\leq b< a_2-a_1\right\}
	$$  
	and $D_2=\{\pm I_2, \pm P\}$, where $P=\diag(-1, 1)$. Obviously, $\Fix(\pm I_2)=\oOm$ and $\Fix(P)=\Fix(-P)$. Thus, 
	it remains to determine when $\begin{bmatrix} a_1 & b \\ & a_2\end{bmatrix}\in \Omega_0$ and $P\begin{bmatrix} a_1 & b \\ & a_2\end{bmatrix}P^{-1}=\begin{bmatrix} a_1 & - b \\ & a_2\end{bmatrix}$  are in the same $U_2$ orbit.  As one easily calculates, these 
			two matrices are in the same $U_2$ orbit if and only if there is $t\in \Z$ such that $b=-b+t(a_2-a_1)$. Unless $b=0$, this is equivalent 
			to $(a_2-a_1)$ being even and $b=(a_2-a_1)/2$. Thus, 
			$$
			\abs{\Fix(P)}=
			\begin{cases}
				1, & \text{if $a_2-a_1$ is odd}; \\
				2, &  \text{if $a_2-a_1$ is even}. 
			\end{cases}
			$$
			By \eqref{eqBurnside}, 
			\[\abs{\ICM(R)} = \frac{1}{4}\left(2(a_2-a_1)+2\abs{\Fix(P)}\right) =\lfloor (a_2-a_1)/2 \rfloor+1. \qedhere
			\]
\end{proof}

Given integers $m_1, \dots, m_t$, put 
$$
\ind_{m_1, \dots, m_t} =
\begin{cases}
1, & \text{if all $m_1, \dots, m_t$ are even}; \\ 
0, & \text{otherwise}. 
\end{cases}
$$

\begin{prop}\label{exampleICM3}
    If $n=3$, then 
    \begin{align*}
        \abs{\ICM(R)} = \frac{1}{4} \Big( \Delta &+ (a_3 - a_2)(1 + \ind_{a_2 - a_1} + \ind_{a_3 - a_1}) \\
        &+ (a_3 - a_1)(1 + \ind_{a_2 - a_1} + \ind_{a_3 - a_2})\\
        &+ (a_2 - a_1)(1 + \ind_{a_3 - a_1} + \ind_{a_3 - a_2}) \Big).
    \end{align*}
\end{prop}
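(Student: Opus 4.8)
The plan is to evaluate the Burnside sum \eqref{eqBurnside} in the case $n=3$. Since $-I_3$ is central, $P$ and $-P$ induce the same conjugation on $\Omega$, so $\Fix(P)=\Fix(-P)$ for every $P\in D_3$; as $D_3\cong(\Z/2\Z)^3$ has the four cosets modulo $\{\pm I_3\}$ represented by $I_3$ and $Q_1\colonequals\diag(-1,1,1)$, $Q_2\colonequals\diag(1,-1,1)$, $Q_3\colonequals\diag(1,1,-1)$, equation \eqref{eqBurnside} collapses to
\[
\abs{\ICM(R)}=\frac14\Bigl(\Delta+\abs{\Fix(Q_1)}+\abs{\Fix(Q_2)}+\abs{\Fix(Q_3)}\Bigr),
\]
using $\abs{\Fix(I_3)}=\abs{\oOm}=\Delta$. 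The rest of the proof is the computation of $\abs{\Fix(Q_1)}$, $\abs{\Fix(Q_2)}$, $\abs{\Fix(Q_3)}$, which I expect to be the $(a_3-a_2)$-, $(a_3-a_1)$- and $(a_2-a_1)$-summands, respectively.

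I would encode a matrix of $\Omega$ by its three above-diagonal entries $(a_{12},a_{13},a_{23})$. Conjugation by $Q_r$ multiplies $a_{ij}$ by $(Q_r)_{ii}(Q_r)_{jj}$, so $Q_1$ negates $a_{12},a_{13}$, $Q_2$ negates $a_{12},a_{23}$, and $Q_3$ negates $a_{13},a_{23}$, the third entry being fixed in each case. For $A\in\Omega_0$, the class $[A]$ is fixed by $Q_r$ precisely when the normalization of Proposition \ref{propOrbits}(1) applied to $Q_rAQ_r^{-1}$ returns $A$. On the triple this normalization is explicit: conjugation by $I_3+tE_{23}$ sends $(a_{12},a_{13},a_{23})$ to $(a_{12},\,a_{13}-t\,a_{12},\,a_{23}+t(a_3-a_2))$; conjugation by $I_3+tE_{12}$ sends it to $(a_{12}+t(a_2-a_1),\,a_{13}+t\,a_{23},\,a_{23})$; conjugation by $I_3+tE_{13}$ changes only $a_{13}\mapsto a_{13}+t(a_3-a_1)$; and one reduces first $a_{23}$ modulo $a_3-a_2$, then $a_{12}$ modulo $a_2-a_1$, then $a_{13}$ modulo $a_3-a_1$.

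Running this for each $Q_r$: the entry among $a_{12},a_{23}$ that is negated, since it is reduced ``cleanly'', must satisfy $a_{12}\in\{0,(a_2-a_1)/2\}$ or $a_{23}\in\{0,(a_3-a_2)/2\}$ for a fixed point, which is where the factors $1+\ind_{\bullet}$ come from; the entry that is fixed by $Q_r$ stays free, contributing the outer factor $a_3-a_2$, $a_3-a_1$ or $a_2-a_1$; and the remaining condition on $a_{13}$ becomes a single affine congruence modulo $a_3-a_1$ whose constant term records the side-effects of the first two reductions. Counting its solutions and invoking the fact that the number of even differences among $a_2-a_1,\ a_3-a_2,\ a_3-a_1$ is one or three (because $a_3-a_1=(a_2-a_1)+(a_3-a_2)$), the partial counts sum to the stated $\ind$-corrected coefficients; in particular, the only case that could spoil this, namely $a_{12}=(a_2-a_1)/2\neq0$ and $a_{23}=(a_3-a_2)/2\neq0$ together (which occurs in the $Q_2$ count), is eliminated because then the forced congruence reads $(a_3-a_1)/2\equiv0\pmod{a_3-a_1}$, which has no solution. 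Adding up and dividing by $4$ gives the formula.

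The delicate step is this final bookkeeping. Because reducing $a_{23}$ and then $a_{12}$ perturbs $a_{13}$, the fixed-point condition on $a_{13}$ is an affine, not homogeneous, congruence whose constant depends on which boundary value ($0$ or half a difference) each of the other two entries takes, so one must split into these cases and verify, case by case and using the parity relation among the three differences, that the partial counts assemble to exactly the coefficient in the statement. Everything else — the Burnside reduction, the sign computation for $Q_r$, and the explicit form of the normalization on triples — is routine.
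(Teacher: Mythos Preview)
Your proposal is correct and follows essentially the same route as the paper: reduce the Burnside sum to $I_3$ and the three $Q_r$, then for each $Q_r$ analyze the normalization of $Q_rAQ_r^{-1}$ on the triple $(a_{12},a_{13},a_{23})$, splitting into cases according to whether the negated ``clean'' entry is $0$ or half the corresponding difference. The paper carries out the $Q_1$ case in full (your sub-case analysis there matches it line for line) and then declares the other two ``similar''; your outline is slightly more uniform in that you treat all three at once and you explicitly flag the one extra wrinkle in the $Q_2$ count, namely that the sub-case $a_{12}=(a_2-a_1)/2\neq0$, $a_{23}=(a_3-a_2)/2\neq0$ contributes nothing because the resulting shift of $a_{13}$ is exactly $(a_3-a_1)/2$, which the paper does not spell out.
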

\begin{proof}
We denote $P_1 = \diag(-1, 1, 1)$, $P_2 = \diag(1, -1, 1)$, and $P_3 = \diag(1, 1, -1)$. Note that for 
$P \in D_3$, $\abs{\Fix(-P)} = \abs{\Fix(P)}$ and $\abs{\Fix{(I_3) }} = \abs{\oOm} = \Delta$. So by \eqref{eqBurnside} we have
    \begin{equation}\label{eq-ICM3}
        \abs{\ICM (R)} = \frac{\Delta + \abs{\Fix{(P_1)}} + \abs{\Fix{(P_2)}} + \abs{\Fix{(P_3)}}}{4}. 
    \end{equation}
    To find $|\Fix(P_1)|$, take a matrix $A \in \Omega_0$ and assume that conjugation by $P_1$ fixes the class of $A$ in $\oOm$. Observe that, if 
    \[
        A =
        \begin{bmatrix}
            a_1 & x & y\\
            & a_2 & z\\
            & & a_3
        \end{bmatrix},
    \]
    then 
    \[
        P_1AP_1^{-1} =
        \begin{bmatrix}
            a_1 & -x & -y\\
            & a_2 & z\\
            & & a_3
        \end{bmatrix}.
    \]
    Suppose $x = 0$. If $y = 0$, then conjugation by $P_1$ fixes the class of $A$. If $y \neq 0$, we note that conjugating $P_1AP_1^{-1}$ by the matrix $I + E_{13}$ only increases $(1, 3)$-th entry of $P_1AP_1^{-1}$ by $a_3 - a_1$. The matrix obtained after this conjugation belongs to $\Omega_0$ and it belongs to the same class as $A$ if and only if $2y = a_3 - a_1$. So there are $(a_3 - a_2)(1 + \ind_{a_3 - a_1})$ classes of matrices in $\oOm$, fixed by $P_1$ and such that their representative $A \in \Omega_0$ satisfies $x = 0$. 

    Now suppose $x \neq 0$. Note that the congruence class of $(1, 2)$-th entry of $A$ modulo $a_2 - a_1$ doesn't change when conjugating $A$ with any element of $U_3$. Hence for $P_1$ to fix the class of $A$, we must have $x \equiv - x \pmod{a_2 - a_1}$. Since $x \neq 0$, the later is equivalent to $x = \frac{a_2 - a_1}{2}$, which is possible only when $2 \mid a_2 - a_1$. Assume that $2 \mid a_2 - a_1$ and observe that conjugating $P_1AP_1^{-1}$ by the matrix $I + E_{12}$ we get
    \[
        (I + E_{12})P_1AP_1^{-1}(I + E_{12})^{-1} = 
        \begin{bmatrix}
            a_1 & (a_2 - a_1) - x & z - y\\
            & a_2 & z\\
            & & a_3
        \end{bmatrix}.
    \]
    By the same argument as in the case of $x = 0$, we see that $P_1$ fixes the class of $A$ if and only if $z - y \equiv y \pmod{a_3 - a_1}$. The later is equivalent to $(a_3 - a_1) \mid (2y - z)$. If $2 \nmid a_3 - a_1$, then for each value of $z$ with $0 \le z < a_3 - a_2$, there is a unique value of $y$ with $0 \le y < a_3 - a_1$ such that $a_3 - a_1 \mid 2y - z$. If $2 \mid a_3 - a_1$, then $a_3 - a_1 \mid 2y - z$ implies $2 \mid z$. Then for any even value of $z$ with $0 \le z < a_3 - a_2$, there are exactly two values of $y$ with $0 \le y < a_3 - a_1$ such that $a_3 - a_1 \mid 2y - z$. In this case $a_3 - a_2$ must be even, so there are $\frac{a_3 - a_2}{2}$ even values of $z$ with $0 \le z < a_3 - a_1$. In both cases we have $a_3 - a_2$ such pairs $(y, z)$. Therefore,
    \[
        \abs{\Fix(P_1)} = (a_3 - a_2)(1 + \ind_{a_2 - a_1} + \ind_{a_3 - a_1}). 
    \]
    Similar calculations for $P_2$ and $P_3$ give
    \begin{align*}
        \abs{\Fix(P_2)} &= (a_3 - a_1)(1 + \ind_{a_2 - a_1} + \ind_{a_3 - a_2}),\\
        \abs{\Fix(P_3)} &= (a_2 - a_1)(1 + \ind_{a_3 - a_1} + \ind_{a_3 - a_2}). 
    \end{align*}
    The claim of the proposition follows by plugging everything into \eqref{eq-ICM3}. 
\end{proof}

\begin{rem} 
For $n = 4$, computations with Python suggest the following (conjectural) formula for $\abs{ \ICM (R)}$:
\begin{align*}
  \abs{\ICM(R)} = \frac{1}{8} \Big ( \Delta 
    &+ (1 + \ind_{a_2 - a_1} + \ind_{a_3 - a_1} + \ind_{a_4 - a_1})(a_3 - a_2)(a_4 - a_2)(a_4 - a_3) \\
    &+ (1 + \ind_{a_2 - a_1} + \ind_{a_3 - a_2} + \ind_{a_4 - a_2})(a_3 - a_1)(a_4 - a_1)(a_4 - a_3) \\
    &+ (1 + \ind_{a_3 - a_1} + \ind_{a_3 - a_2} + \ind_{a_4 - a_3})(a_2 - a_1)(a_4 - a_1)(a_4 - a_2) \\
    &+ (1 + \ind_{a_4 - a_1} + \ind_{a_4 - a_2} + \ind_{a_4 - a_3})(a_2 - a_1)(a_3 - a_2)(a_3 - a_1) \\
    &+ (1 + \ind_{a_3 - a_2} + \ind_{a_3 - a_1} + \ind_{a_4 - a_2} + \ind_{a_4 - a_1} \\
    &+ \ind_{a_4 - a_1, a_3 - a_2} + \ind_{a_4 - a_2, a_3 - a_1})(a_2 - a_1)(a_4 - a_3) \\
    &+ (1 + \ind_{a_2 - a_1} + \ind_{a_4 - a_1} + \ind_{a_3 - a_2} + \ind_{a_4 - a_3} \\
    &+ \ind_{a_2 - a_1, a_4 - a_3} + \ind_{a_4 - a_1, a_3 - a_2})(a_3 - a_1)(a_4 - a_2) \\
    &+ (1 + \ind_{a_2 - a_1} + \ind_{a_3 - a_1} + \ind_{a_4 - a_2} + \ind_{a_4 - a_3} \\
    &+ \ind_{a_4 - a_2, a_3 - a_1} + \ind_{a_2 - a_1, a_4 - a_3})(a_4 - a_1)(a_3 - a_2) \Big ). 
\end{align*}
\end{rem}


From the previous examples it is not hard to see that $\abs{\ICM(R)} \sim \Delta/2^{n-1}$ as $\Delta\to \infty$. Next, we prove that this 
is a general phenomenon. 

\begin{thm} For any fixed $n\geq 2$, 
$$
\lim_{\Delta\to \infty}\frac{\abs{\ICM(R)} \cdot 2^{n-1}}{\Delta} = 1. 
$$	
\end{thm}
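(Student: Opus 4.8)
The plan is to feed the Burnside count \eqref{eqBurnside} into an asymptotic estimate. Isolating the two central elements $I_n$ and $-I_n$ of $D_n$, whose conjugation action on $\oOm$ is trivial so that $\abs{\Fix(\pm I_n)}=\abs{\oOm}=\Delta$, we obtain
\[
\abs{\ICM(R)}=\frac{\Delta}{2^{n-1}}+\frac{1}{2^n}\sum_{P\in D_n\setminus\{I_n,-I_n\}}\abs{\Fix(P)},
\]
hence $\dfrac{\abs{\ICM(R)}\cdot 2^{n-1}}{\Delta}=1+\dfrac{1}{2\Delta}\displaystyle\sum_{P\neq\pm I_n}\abs{\Fix(P)}$. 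Since the sum has only $2^n-2$ terms, it suffices to show $\abs{\Fix(P)}=o(\Delta)$ as $\Delta\to\infty$ for each fixed $P=\diag(\eps_1,\dots,\eps_n)\neq\pm I_n$.

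The first main step is a uniform bound $\abs{\Fix(P)}\le 2^{\binom n2}\prod_{i<j,\ \eps_i\eps_j=1}(a_j-a_i)$. The idea is to rephrase the fixed-point condition in terms of centralizers: the $U_n$-orbit of $A\in\Omega_0$ is fixed by $P$ iff $PAP^{-1}$ and $A$ are $U_n$-conjugate, i.e.\ iff $QAQ^{-1}=PAP^{-1}$ for some $Q\in U_n$; setting $h=P^{-1}Q$, this is equivalent to the existence of an upper-triangular $h\in\GL_n(\Z)$ with diagonal $(\eps_1,\dots,\eps_n)$ (automatically invertible over $\Z$, the diagonal entries being $\pm1$) commuting with $A$. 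Equating the $(i,j)$-entries of $hA$ and $Ah$ for $i<j$ and solving for $h_{ij}$ yields the recursion
\[
(a_j-a_i)\,h_{ij}=(\eps_j-\eps_i)\,a_{ij}+\sum_{i<k<j}\bigl(a_{ik}h_{kj}-h_{ik}a_{kj}\bigr),
\]
which determines each $h_{ij}\in\Q$ from the entries of $A$ and from the $h_{k\ell}$ with $\ell-k<j-i$. Processing the pairs $(i,j)$ in order of increasing $j-i$: when $\eps_i\neq\eps_j$ the right-hand side is $\pm 2a_{ij}$ plus an already-determined integer, so integrality of $h_{ij}$ forces $a_{ij}$ into at most $\gcd(2,a_j-a_i)\le 2$ residues modulo $a_j-a_i$; when $\eps_i=\eps_j$ the coordinate $a_{ij}$ is unconstrained by its own equation. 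Since an element of $\Omega_0$ is precisely a tuple $(a_{ij})_{i<j}$ with $0\le a_{ij}<a_j-a_i$, multiplying the number of admissible choices over all pairs gives the bound.

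The second step is to show $\Delta_P^{\times}\colonequals\prod_{i<j,\ \eps_i\eps_j=-1}(a_j-a_i)\to\infty$ as $\Delta\to\infty$, so that the first step yields $\abs{\Fix(P)}/\Delta\le 2^{\binom n2}/\Delta_P^{\times}\to 0$. From $\Delta\le(a_n-a_1)^{\binom n2}$ the hypothesis forces $a_n-a_1\to\infty$, and since $P\neq\pm I_n$ both sign classes $\{i:\eps_i=1\}$ and $\{i:\eps_i=-1\}$ are nonempty. If $\eps_1\neq\eps_n$ then $(1,n)$ is an opposite-sign pair and $\Delta_P^\times\ge a_n-a_1$; otherwise choose $m$ with $\eps_m\neq\eps_1$, whereupon $(1,m)$ and $(m,n)$ are both opposite-sign pairs and $\Delta_P^\times\ge(a_m-a_1)(a_n-a_m)\ge\tfrac12(a_n-a_1)$. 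In either case $\Delta_P^\times\to\infty$, and combining the two steps proves the theorem.

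I expect the only real obstacle to be the first step: turning the ``fixed under $D_n$'' condition into the existence of a commuting upper-triangular matrix with prescribed diagonal, and then carefully justifying --- using the ordering by $j-i$ --- that each opposite-sign coordinate $a_{ij}$ can be treated last relative to everything its own divisibility condition involves, so that the naive product of local choice-counts is a valid upper bound. Everything else (the Burnside bookkeeping and the elementary estimate $\Delta_P^\times\to\infty$) is routine.
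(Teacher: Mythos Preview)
Your proof is correct and follows the same strategy as the paper's: isolate $\pm I_n$ in the Burnside count, bound $\abs{\Fix(P)}$ for each nontrivial $P$ by a constant times $\prod_{\eps_i\eps_j=1}(a_j-a_i)$, and show that the complementary product tends to infinity with $\Delta$ via the same case split on whether $\eps_1=\eps_n$. Your derivation of the key bound is a clean reformulation of the paper's: rather than running the reduction algorithm of Proposition~\ref{propOrbits} on $P_JAP_J^{-1}$, you recast the fixed-orbit condition as the existence of an integer upper-triangular centralizer of $A$ with diagonal $(\eps_1,\dots,\eps_n)$ and read off the congruences $2a_{ij}\equiv -C_{ij}\pmod{a_j-a_i}$ directly from the commutation relations---the two routes produce the same constraints (your constant $2^{\binom n2}$ is just a looser version of the paper's $2^{\abs{J}(n-\abs{J})}$, which is harmless for the limit).
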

\begin{proof} 
	Since $\Fix(\pm I_n)=\oOm$, we can rewrite \eqref{eqBurnside} as 
	$$
	\frac{2^{n-1}}{\Delta}\cdot \abs{\ICM(R)} = 1+\frac{1}{2\Delta} \sum_{P\in D_n\bs \{ \pm I_n \} } \abs{\Fix(P)}. 
	$$
    We will show that for any $P\in D_n$ such that $P\neq \pm I_n$, we have 
	$$
	\lim_{\Delta\to \infty} \frac{ \abs{\Fix(P)}}{\Delta} =0. 
	$$
    
    For a subset of indices $J \subseteq \{1, 2, \dots, n\}$, we denote $P_J = \diag(x_1, x_2, \dots, x_n)$ where $x_j = -1$ if $j \in J$ and $x_j = 1$ if $j \not \in J$. We claim that 
    \begin{equation*}
        \abs{\Fix(P_J)} \le 2^{\abs{J} (n - \abs{J})} \prod_{\substack{1 \le i < j \le n \\ i, j \in J}}(a_j - a_i) \prod_{\substack{1 \le i < j \le n \\ i, j \not \in J}}(a_j - a_i).
    \end{equation*}
    Suppose that the conjugation by $P_J$ fixes the orbit of the matrix $A = (a_{ij})\in \Omega_0$. We note that the conjugation by $P_J$ changes the signs of those entries $a_{ij}$ of $A$ for which exactly one of the indices $i$ and $j$ belongs to $J$. There are exactly $\abs{J}(n - \abs{J})$ such entries that lie above the diagonal of $A$. Next, note that there are exactly 
    $$\prod_{\substack{1 \le i < j \le n \\ i, j \in J}}(a_j - a_i) \prod_{\substack{1 \le i < j \le n \\ i, j \not \in J}}(a_j - a_i)$$ possible combinations for the rest of the entries of $A$. Now we apply the process described in the proof of the Proposition \ref{propOrbits} to bring $P_JAP_J^{-1}$ to a matrix from $\Omega_0$ by 
    conjugating with elements of $U_n$. We observe that each time when we need to change the $(i, j)$-th entry with exactly one of the indices $i$ and $j$ in $J$, $a_{ij}$ must satisfy a congruence $-a_{ij} + C \equiv a_{ij} \pmod{a_j - a_i}$, for some already known integer $C$. Since this congruence can have at most $2$ solutions with $0 \le a_{ij} < a_j - a_i$ we obtain the required upper bound for $\abs{\Fix(P_J)}$. 

    Now for any $P = \diag(x_1, x_2, \dots, x_n) \in D_n \bs \{\pm I_n\}$, we consider the corresponding set of indices $J$. Note that $J \neq \varnothing $ and $\{1, 2, \dots, n\} \bs J \neq \varnothing$. If $x_1 \neq x_n$, then note that the proved inequality implies $\frac{\abs{\Fix(P)}}{\Delta} \le \frac{2^{\abs{J}(n - \abs{J})}}{a_n - a_1}$. Since 
    $(a_n - a_1) \rightarrow \infty$ when $\Delta \rightarrow \infty$, $\lim_{\Delta\to \infty} \frac{ \abs{\Fix(P)}}{\Delta} =0$. If $x_1 = x_n$, since $J \neq \varnothing$ and $\{1, 2, \dots, n\} \bs J \neq \varnothing$, $n \ge 3$ and there is an index $j \in \{1, 2, \dots, n\}$ such that $x_j = -x_1 = -x_n$. Therefore $\frac{\abs{\Fix(P)}}{\Delta} \le \frac{2^{\abs{J}(n - \abs{J})}}{(a_n - a_j)(a_j - a_1)}$ and hence $\lim_{\Delta\to \infty} \frac{ \abs{\Fix(P)}}{\Delta} =0$.
\end{proof}


\section{Ideal class group} \label{sICG} 

As in Section \ref{sICM}, let $R=\Z[x]/(\chi(x))$ and $K=\Q[x]/(\chi(x))$, 
where $$\chi(x)=(x-a_1)\cdots(x-a_n)\in \Z[x], \quad a_1<a_2<\cdots <a_n. $$ 
In this section we compute the order of $\Cl(R)$ using the Class Number Formula for orders in products 
of number fields proved in \cite{JP}. 

First, we realize $R$ as a subring of $\Z^n$, which helps to determine its group of units. Consider the evaluation homomorphism 
\begin{align*} 
	\phi\colon \Z[x] & \To  \Z^n,  \\ 
	f(x) &\longmapsto (f(a_1), \dots, f(a_n)). 
\end{align*}
Since $\phi(\chi)=(0, \dots, 0)$, the above map factors through a homomorphism $R\to \Z^n$, which we again denote by $\phi$. 

\begin{lem}\label{lemR'} The homomorphism $\phi\colon R\to \Z^n$ is injective and its image is a subring of 
	$$
	R'=\{(b_1, b_2, \dots, b_n)\in \Z^n \mid b_j\equiv b_i\Mod{(a_j-a_i)}\text{ for all }1\leq i<j\leq n\}. 
	$$
\end{lem}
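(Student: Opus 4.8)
The plan is to prove the two assertions separately: injectivity of $\phi$, and containment of its image in $R'$. For injectivity, I would argue as follows. Since $R=\Z[x]/(\chi(x))$ is a free $\Z$-module of rank $n$ with basis $1,\bar x,\dots,\bar x^{n-1}$, and $\Z^n$ is also free of rank $n$, it suffices to show that $\phi$ is injective after tensoring with $\Q$, i.e.\ that $\bar\phi\colon K=\Q[x]/(\chi(x))\to\Q^n$ is injective. But $\bar\phi$ sends $f(x)\bmod\chi$ to $(f(a_1),\dots,f(a_n))$, and its kernel is the set of polynomials (mod $\chi$) vanishing at all of $a_1,\dots,a_n$; since the $a_i$ are distinct, such a polynomial of degree $<n$ is identically zero. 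Alternatively, and perhaps more cleanly, this is just the Chinese Remainder Theorem: $\chi(x)=\prod_i(x-a_i)$ with the factors pairwise coprime in $\Q[x]$, so $K\cong\prod_i\Q[x]/(x-a_i)\cong\Q^n$, and $\bar\phi$ is precisely this isomorphism. Hence $\ker\phi\subseteq\ker\bar\phi\cap R=0$.

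For the containment $\phi(R)\subseteq R'$, the key observation is multiplicativity: $\phi$ is a ring homomorphism, so it suffices to check that the images of a set of $\Z$-algebra generators of $R$ lie in $R'$, and that $R'$ is closed under the ring operations of $\Z^n$. The ring $R$ is generated over $\Z$ by $\bar x$, whose image is $\phi(\bar x)=(a_1,a_2,\dots,a_n)$. For each pair $i<j$ we have $a_j-a_i\equiv 0\pmod{(a_j-a_i)}$ trivially, so $(a_1,\dots,a_n)\in R'$. Likewise $\phi(1)=(1,1,\dots,1)\in R'$ since $1\equiv 1$ modulo anything. It remains to note that $R'$ is a subring of $\Z^n$: if $(b_i),(b_i')\in R'$ then for each $i<j$, $(b_j\pm b_j')-(b_i\pm b_i')=(b_j-b_i)\pm(b_j'-b_i')\equiv 0$ and $b_jb_j'-b_ib_i'=b_j(b_j'-b_i')+b_i'(b_j-b_i)\equiv 0$ modulo $(a_j-a_i)$, and $R'$ contains $(1,\dots,1)$. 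Since $\phi(R)$ is generated as a ring by elements of $R'$ and $R'$ is a subring, $\phi(R)\subseteq R'$.

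I do not anticipate a genuine obstacle here; the statement is elementary and the only mild subtlety is organizational — making sure one invokes the ring-homomorphism property rather than trying to verify the congruences on an arbitrary element $f(\bar x)$ directly (though that too works, using $a_j-a_i\mid f(a_j)-f(a_i)$ for any $f\in\Z[x]$, which is the standard divisibility $u-v\mid f(u)-f(v)$). In fact that last remark gives an even shorter route to the containment: for any $f\in\Z[x]$ and any $i<j$, the integer $a_j-a_i$ divides $f(a_j)-f(a_i)$, so $\phi(f)\in R'$ directly, bypassing the generator argument entirely. I would probably present the proof using this divisibility fact for the containment and CRT (or the degree argument) for injectivity, as both are one line each.
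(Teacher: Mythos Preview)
Your proposal is correct and essentially matches the paper's proof. The paper proves injectivity by a division-algorithm argument directly in $\Z[x]$ (writing $f=g\chi+r$ with $\deg r<n$ and observing $r=0$), while you pass to $\Q$ first and invoke CRT or the degree argument there; these are the same idea in slightly different packaging. For the containment $\phi(R)\subseteq R'$, your ``shorter route'' via the elementary divisibility $a_j-a_i\mid f(a_j)-f(a_i)$ is exactly what the paper does.
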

\begin{proof} Let $f(x)\in \Z[x]$. If $\phi(f)=0$, then $a_1, \dots, a_n$ are roots of $f(x)$. Since $\chi(x)$ 
	is monic, we can apply the division algorithm to write $f(x)=g(x)\chi(x)+r(x)$, where $g(x), f(x)\in \Z[x]$ and either $\deg(r)<\deg(\chi)=n$ or $r=0$. 
	Evaluating both sides of the above equation at the $a_i$'s, we see that $r(a_i)=0$ for all $1\leq i\leq n$. Thus,  $r=0$, as otherwise $\deg(r)\geq n$. 
	We conclude that the kernel of $\phi\colon \Z[x]\to \Z^n$ is $(\chi(x))$, so $\phi\colon R\to \Z^n$ is injective.  
	If  $(b_1, \dots, b_n)\in \phi(R)$, then there exists 
$f(x)\in \Z[x]$ such that $f(a_i)=b_i$ for all $1\leq i\leq n$. Since 
$f(a_j)-f(a_i)$ is divisible by $(a_j-a_i)$, we see that $(b_1, \dots, b_n) \in R'$. Thus, $\phi\colon R\hookrightarrow R'$. 
\end{proof}

\begin{rem}\label{remR2}
If $n=2$, then $\phi(R)=R'$. To see this suppose $(b_1, b_2)\in \Z^2$ is such that $(a_2-a_1)\mid (b_2-b_1)$. 
	Let $g(x)=x+(b_1-a_1)$, so that $g(a_1)=b_1$. Write $b_2=b_1+(a_2-a_1)t$ with $t\in \Z$. If we take $f(x)=g(x)+(x-a_1)(t-1)$, then 
	$f(a_1)=g(a_1)=b_1$ and $f(a_2)=g(a_2)+(a_2-a_1)(t-1)=(a_2-a_1)+b_1+(a_2-a_1)(t-1)=b_2$. Hence $\phi(f)=(b_1, b_2)$. 
	
	Generally, for $n\geq 3$, the inclusion $\phi(R)\subseteq R'$ can be strict. This is related to the fact that the polynomials arising from the 
	Lagrange interpolation do not have integer coefficients. As an explicit example, let $a_1=0, a_2=2, a_3=4$ and $b_1=0, b_2=2, b_3=8$. 
	We claim that there is no $f(x)\in \Z[x]$ such that $f(a_i)=b_i$ for $i=1,2,3$. Taking the residue of the hypothetical $f(x)$ by $\chi(x)$, 
	we may assume that $f(x)=ax^2+bx+c$. Now the required equalities force $b=c=0$ and $2a=1$. 
\end{rem}


\begin{cor}\label{cor3.3}
	If $a_n - a_1 \geq 4$, then $R^\times=\{\pm 1\}$. 
\end{cor}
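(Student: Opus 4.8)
The plan is to use the injection $\phi\colon R\hookrightarrow R'\subseteq \Z^n$ from Lemma \ref{lemR'} to pin down the units. A unit $u\in R^\times$ maps to a unit of $\Z^n$, so $\phi(u)=(\eps_1,\dots,\eps_n)$ with each $\eps_i\in\{\pm1\}$. The goal is to show that all the $\eps_i$ must be equal, which forces $u=\pm1$ since $\phi(1)=(1,\dots,1)$ and $\phi$ is injective.

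First I would record the key constraint coming from membership in $R'$: for all $i<j$ we have $\eps_j\equiv\eps_i\pmod{a_j-a_i}$. If $\eps_i\neq\eps_j$ for some pair, then $\eps_j-\eps_i=\pm2$, so $(a_j-a_i)\mid 2$, i.e. $a_j-a_i\in\{1,2\}$. In particular, $a_{i+1}-a_i\in\{1,2\}$ is \emph{not} forced in general, but if the signs are not all equal, there exist indices with $\eps=+1$ and indices with $\eps=-1$; picking the smallest such $a$ with one sign and the largest with the other (or more carefully, any $i<j$ with $\eps_i\neq\eps_j$) yields $a_j-a_i\le 2$. The cleanest route: suppose not all $\eps_i$ are equal, and let $p$ be the largest index with $\eps_p=\eps_1$ after possibly relabeling — actually it is simplest to argue that the index set $\{i:\eps_i=1\}$ and its complement are both nonempty, so there is \emph{some} consecutive pair (in the sorted order $a_1<\dots<a_n$) with differing signs; wait, that is false. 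Let me instead just take \emph{any} $i<j$ with $\eps_i\neq\eps_j$: then $a_j-a_i\le2$, hence in particular $a_n-a_1\ge a_j-a_i$ gives nothing directly. The correct extraction: if the signs are not all equal, then since $a_1<a_2<\dots<a_n$, the sign sequence $(\eps_1,\dots,\eps_n)$ changes value, so there is an index $k$ with $\eps_k\neq\eps_{k+1}$, and then for \emph{every} pair straddling this change we need divisibility; in particular taking $i=1$, $j=n$ when $\eps_1\neq\eps_n$ gives $(a_n-a_1)\mid2$, contradicting $a_n-a_1\ge4$.

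So the real case analysis is: either $\eps_1\neq\eps_n$, which immediately gives $(a_n-a_1)\mid 2$, contradicting the hypothesis; or $\eps_1=\eps_n$ but some intermediate $\eps_k$ differs. In the latter case, since $\eps_k\neq\eps_1$ but $\eps_k$ must be congruent to both $\eps_1$ and $\eps_n$ modulo $a_k-a_1$ and $a_n-a_k$ respectively, we get $(a_k-a_1)\mid 2$ and $(a_n-a_k)\mid 2$. Hence $a_k-a_1\le2$ and $a_n-a_k\le2$, so $a_n-a_1=(a_n-a_k)+(a_k-a_1)\le4$; combined with $a_n-a_1\ge4$ this forces $a_n-a_1=4$, $a_k-a_1=2$, $a_n-a_k=2$, so $(a_1,a_k,a_n)=(a_1,a_1+2,a_1+4)$, and these differences are $2$, not dividing... wait $2\mid2$, so this is consistent with the congruences. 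I need one more observation to kill this boundary case: here $n\ge3$ and $a_1,a_1+2,a_1+4$ are among the $a_i$; if there is a fourth root it lies strictly between two of these or outside, creating a difference $\le2$ with an endpoint of opposite sign or a difference that fails divisibility — either way one checks the sign pattern $\eps=(\dots)$ cannot be consistent. Concretely, with $\{a_1,a_1+2,a_1+4\}$ present and $\eps$-values $(+,-,+)$ (say), any additional root $a_m$ has $|a_m - a_1|$ or $|a_m-(a_1+4)|$ equal to an odd number or $>2$ unless $a_m\in\{a_1+1,a_1+3\}$, but then $a_m-(a_1+2)=\pm1$ forces $\eps_m=\eps_k$, and $a_m-a_1\in\{1,3\}$ forces (if $3$) a contradiction with $\eps_1\neq\eps_m$, or (if $1$) $\eps_m=\eps_1$, contradicting $\eps_m=\eps_k\neq\eps_1$. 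So no fourth root fits, meaning $n=3$ exactly with roots $\{a_1,a_1+2,a_1+4\}$; but then $\Delta_1=\gcd(2,2,4)=2$ and one can exhibit directly that $(\eps_1,\eps_2,\eps_3)=(1,-1,1)$ does \emph{not} come from an element of $\phi(R)$ — indeed by the explicit computation in Remark \ref{remR2} (the case $a_1=0,a_2=2,a_3=4$), $\phi(R)\subsetneq R'$ and the relevant vectors are excluded. Thus $R^\times=\{\pm1\}$.

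The main obstacle is exactly this boundary configuration $\{a,a+2,a+4\}$: the congruence conditions defining $R'$ are satisfied by the alternating sign vector, so Lemma \ref{lemR'} alone is insufficient and one must use the strict inclusion $\phi(R)\subsetneq R'$. I would handle it by the Lagrange-interpolation obstruction already spelled out in Remark \ref{remR2}: a polynomial $f\in\Z[x]$ with prescribed values $(\eps_1,\eps_2,\eps_3)$ at $(a_1,a_1+2,a_1+4)$ would, after reduction mod $\chi$, be quadratic, and solving the $3\times3$ linear system shows the leading coefficient is forced to be a half-integer, a contradiction. Alternatively — and perhaps more cleanly for the writeup — one avoids the case split by working directly in $R'$ and noting that $R^\times\subseteq (R')^\times$ only as a sanity check, then arguing that the \emph{specific} unit would have to equal its own ``reflection'' $\pm1$ via the trace/norm pairing; but I expect the interpolation argument to be the most transparent, so that is what I would write up.
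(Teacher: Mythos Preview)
Your reduction to the boundary case $a_n-a_1=4$ is correct, but the subsequent case analysis has a genuine error. You write that for an additional root $a_m\in\{a_1+1,a_1+3\}$, ``$a_m-(a_1+2)=\pm1$ forces $\eps_m=\eps_k$.'' This is false: a congruence modulo $1$ is vacuous, so $|a_m-a_k|=1$ imposes no relation between $\eps_m$ and $\eps_k$. The rest of that paragraph collapses once this step fails. In fact the configuration $\{a_1,a_1+1,a_1+2,a_1+4\}$ with sign pattern $(1,1,-1,1)$ satisfies \emph{all} the pairwise congruences defining $R'$, so the $R'$-conditions alone do not reduce you to $n=3$; you would also have to treat $n=4$ (two configurations) and $n=5$ (roots $a_1,\dots,a_1+4$) by separate interpolation checks.

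The paper sidesteps this entirely by using the full strength of $\eps\in\phi(R)$ rather than just $\eps\in R'$. Once $\eps_1=\eps_n$ (which you also obtain), one has an integer polynomial $f$ with $f(a_1)=f(a_n)=\eps_1$, so $(x-a_1)(x-a_n)$ divides $f(x)-\eps_1$ in $\Z[x]$. Evaluating at $a_j$ gives $\eps_j-\eps_1=(a_j-a_1)(a_j-a_n)q(a_j)$ with $q(a_j)\in\Z$, hence $(a_j-a_1)(a_n-a_j)\le 2$. For $a_n-a_1\ge 4$ this product is at least $3$ for every intermediate $j$, and you are done in one line. The point is that working with the polynomial gives a \emph{product} bound $(a_j-a_1)(a_n-a_j)\le 2$, whereas the pairwise congruences only give you the much weaker additive bound $(a_j-a_1)+(a_n-a_j)\le 4$, which is why your approach gets stuck on the boundary.
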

\begin{proof} By Lemma \ref{lemR'}, it is enough to show that under the given conditions $(\im{\phi})^\times = \{ \pm 1 \}$. 
	We have $(\Z^n)^\times = \{(\eps_1, \dots, \eps_n)\mid \eps_i=\pm 1, 1\leq i\leq n\}$. Suppose $\eps\in (\Z^n)^\times $ 
	lies in $(\im{\phi})^\times$, but $\eps_j\neq \eps_1$ for some $1< i\leq n$. Since $\eps_1\equiv \eps_n\Mod{a_n-a_1}$, we have $\eps_1=\eps_n$. Since $\varepsilon \in \im{\phi}$, there is a polynomial $f \in \Z[x]$, such that $f(a_i) = \varepsilon_i$, for $1 \leq i \leq n$. Note that $(x - a_1)(x - a_n)  \mid f(x) - \varepsilon_1$. Hence there is a polynomial $q \in \Z[x]$, such that $f(x) - \varepsilon_1 = (x - a_1)(x - a_n)q(x)$. Substituting $x = a_j$ in this equation we obtain $\varepsilon_j - \varepsilon_1 = (a_j - a_1)(a_j - a_n)q(a_j)$. Since $\varepsilon_j \neq \varepsilon_1$, we must have $2 \geq (a_j - a_1)(a_n - a_j)$ which is not possible when $a_n - a_1 \geq 4$. 
\end{proof}

\begin{rem} \label{unit_rem}
    Note that if $n\geq 5$ or $\Delta$ is sufficiently large, then $a_n-a_1\geq 4$, and hence by the corollary above $\abs{R^\times} = 2$. Thus, there can be only finitely many cases where $\abs{R^\times} > 2$. Here is the list of these cases, which is easy to deduce by direct calculations:
    \begin{itemize}
        \item If $n = 2$ and $a_2 - a_1 \le 2$, then $R^\times \cong \{(b_1, b_2) \mid b_1, b_2 \in \{\pm 1\}\}$. 
        \item If $n = 3$ and $a_3 - a_1 = 2$, then $R^\times \cong \{(b_1, b_2, b_3) \mid b_1, b_2, b_3 \in \{\pm 1\}\}$.
        \item If $n = 3$ and $a_3 - a_1 = 3$, then $R^\times \cong \{(b_1, b_2, b_3) \mid b_1, b_2, b_3 \in \{\pm 1\} \text{ and } b_1 = b_3\}$.
        \item If $n = 4$ and $a_4 - a_1 = 3$, then $R^\times \cong \{(b_1, b_2, b_3, b_4) \mid b_1, b_2, b_3, b_4 \in \{\pm 1\} \text{ and } b_1 = b_4\}$. 
    \end{itemize}
    We briefly discuss the last case. Without loss of generality, assume that $a_1 = 0$, $a_2 = 1$, $a_3 = 2$, and $a_4 = 3$. Consider the polynomials $r(x) = 1 - x(x - 2)(x - 3)$ and $s(x) = 1 + x(x - 1)(x - 3)$. Then $\phi(r) = (1, -1, 1, 1)$ and $\phi(s) = (1, 1, -1, 1)$. On one hand, $(1, 1, 1, 1), (-1, -1, -1, -1) \in \im \phi$ and if $a \in \im \phi$, then $-a \in \im \phi$, so $|\im{\phi}^\times| \ge 8$. On the other hand, if $(b_1, b_2, b_3, b_4) \in \im{\phi}^\times$, then there is a polynomial $f \in \Z[x]$ such that $f(a_j) = b_j$, $j = 1, 2, 3, 4$. This implies $3 = a_4 - a_1 \mid f(a_4) - f(a_1) = b_4 - b_1$. Since $b_1, b_4 \in \{\pm 1\}$, we must have $b_1 = b_4$. Therefore, 
    \[
        R^\times \cong \im{\phi}^\times = \{(b_1, b_2, b_3, b_4) \mid b_1, b_2, b_3, b_4 \in \{\pm 1\} \text{ and } b_1 = b_4\}.
    \] 
\end{rem}

The \textit{zeta function} $\zeta_R(s)$ of $R$ is defined as 
$$
\zeta_R(s)=\prod_{\fp}\left(1-N(\fp)^{-s}\right)^{-1},
$$
where the product is over the maximal ideals of $R$ and $N(\fp)\colonequals \abs{R/\fp}$. The function $\zeta_R(s)$ converges for $\re(s)>1$ 
and has a pole of at $s=1$ of order $n$; see \cite[$\S$5.6]{JP}. 

Let $\bar{x}$ denote the image of $x$ in $R$ under the quotient map $\Z[x]\to R=\Z[x]/(\chi(x))$. Then $\{1, \bar{x}, \dots, \bar{x}^{n-1}\}$ 
is a basis of $R$ as a $\Z$-module. Therefore, the \textit{discriminant} of $R$ is 
\begin{align*}
\disc(R) & = \det (\Tr_{K/\Q}(\bar{x}^i\bar{x}^j))_{1\leq i, j\leq n} = \Delta^2. 
\end{align*}
Next, since $R^\times$ is finite, the regulator of $R$ is trivial. Thus, in our case, Theorem 1.1 in \cite{JP} specializes  
to the following 
\begin{equation}\label{eqCNF}
\lim_{s\to 1}(s-1)^n \zeta_R(s) = \frac{2^n}{\abs{R^\times}\cdot \Delta} \abs{\Cl(R)}. 
\end{equation}

Let $\fp\lhd R$ be a maximal ideal  and let $\fp'\lhd \Z^n$ be a maximal ideal of $\Z^n$ lying over $\fp$, i.e., $\fp=\fp'\cap R$. 
Since $\Z^n/\fp'\cong \F_p$ for some prime $p$ of $\Z$, we also have $R/\fp\cong \F_p$. The number of maximal 
ideals of $R$ with residue field $\F_p$ is finite; denote this number by $A(p)$. 
Let $\alpha_1, \dots, \alpha_k\in \F_p$ be the distinct residues 
that one obtains by reducing $a_1, \dots, a_n$  modulo $p$. Given $1\leq i\leq k$, let $m_i$ be the number of $a_j$'s such that $\alpha_i\equiv a_j \Mod{p}$. 
Then 
$$
\chi(x) \equiv (x-\alpha_1)^{m_1}\cdots (x-\alpha_k)^{m_k} \Mod{p},
$$
and $m_1+\cdots+m_k=n$. Now 
\begin{align*}
R/(p)=\Z[x]/(p, \chi(x))&\cong \F_p[x]/\left((x-\alpha_1)^{m_1}\cdots (x-\alpha_k)^{m_k}\right). 
\end{align*}
The maximal ideals of this last ring are the ideals generated by $(x-\alpha_i)$,  $1\leq i\leq k$. Thus, $A(p)=k$. We can rewrite 
\begin{align*}
\zeta_R(s) &= \prod_{k=1}^n \prod_{A(p)=k} (1-p^{-s})^{-k}  \\ &= \prod_{p\nmid \Delta} (1-p^{-s})^{-n}  \prod_{k=1}^{n-1} \prod_{A(p)=k} (1-p^{-s})^{-k} \\ 
&= \zeta_{\Z}(s)^n  \prod_{k=1}^{n-1} \prod_{A(p)=k} (1-p^{-s})^{n-k}.
\end{align*}
Therefore, 
\begin{align}\label{eq-limZ}
\lim_{s\to 1}(s-1)^n \zeta_R(s) & = \prod_{k=1}^{n-1} \prod_{A(p)=k} \left(1-p^{-1}\right)^{n-k} \\ 
\nonumber &= \prod_{l=1}^{n-1} \prod_{A(p)\leq l} \left(1-p^{-1}\right). 
\end{align}

\begin{lem}\label{lem-gcdDelta} Let $\Delta_l$ be defined as in \eqref{eqDeltak}. Then 
	$$
	\prod_{A(p)\leq l} \left(1-p^{-1}\right) = \varphi(\Delta_l)/\Delta_l. 
	$$
\end{lem}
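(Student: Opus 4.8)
The plan is to show that the set of primes indexing the product on the left coincides with the set of prime divisors of $\Delta_l$; the asserted identity then follows from the standard factorization $\varphi(m)/m = \prod_{p \mid m}(1 - p^{-1})$.

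First I would record the elementary observation that for a prime $p$ and a subset $S \subseteq \{1, \dots, n\}$ one has $p \mid \Delta_S$ if and only if $S$ contains two distinct indices $i, j$ with $a_i \equiv a_j \pmod p$, which is immediate from $\Delta_S = \prod_{i<j,\ i,j\in S}(a_j - a_i)$ and the primality of $p$. Consequently $p \mid \Delta_l = \gcd_{|S| = l+1}\Delta_S$ if and only if \emph{every} $(l+1)$-element subset $S$ contains a repeated residue class modulo $p$.

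Next I would translate this into a statement about $A(p)$, the number of distinct residues of $a_1, \dots, a_n$ modulo $p$, via pigeonhole in both directions. If $A(p) \le l$, then any $l+1$ of the $a_i$ must have two lying in a common residue class, so $p \mid \Delta_S$ for all $|S| = l+1$, hence $p \mid \Delta_l$. Conversely, if $A(p) \ge l+1$, choose indices $i_1, \dots, i_{l+1}$ representing $l+1$ distinct residue classes modulo $p$; then for $S = \{i_1, \dots, i_{l+1}\}$ we have $p \nmid \Delta_S$, so $p \nmid \Delta_l$. Thus $\{\,p : A(p) \le l\,\} = \{\,p : p \mid \Delta_l\,\}$, and these are finite sets of primes (note $\Delta_l \mid \Delta$, so the product is a genuine finite product).

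Substituting this equality of index sets gives
$$\prod_{A(p) \le l}(1 - p^{-1}) = \prod_{p \mid \Delta_l}(1 - p^{-1}) = \frac{\varphi(\Delta_l)}{\Delta_l},$$
as claimed. There is no serious obstacle here; the only step requiring any care is the two-directional pigeonhole argument identifying the condition $A(p) \le l$ with divisibility of every $\Delta_S$, $|S| = l+1$, by $p$.
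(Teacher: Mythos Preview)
Your proof is correct and follows exactly the same approach as the paper: both reduce the identity to showing that $A(p)\le l$ if and only if $p\mid\Delta_l$, via the pigeonhole observation that every $(l+1)$-subset of $\{a_1,\dots,a_n\}$ contains a repeated residue modulo $p$ precisely when there are at most $l$ distinct residues. Your write-up is somewhat more explicit (spelling out both directions and noting finiteness of the product), but the argument is the same.
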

\begin{proof}
	It is enough to prove that $p\mid \Delta_l$ if and only if $A(p)\leq l$. For this note that $A(p)\leq l$ is equivalent to the 
	statement that for any subset $S\subset \{1, \dots, n\}$ of order $l+1$ we have $a_i\equiv a_j\Mod{l}$ for at least two 
	distinct $i, j\in S$. Thus, 
	\[ 
	A(p)\leq l \quad \Longleftrightarrow  \quad p \mid \Delta_S \text{ for all }S\subset \{1, \dots, n\} \text{ with }\abs{S}=l+1 \quad  \Longleftrightarrow\quad 
	p\mid \Delta_l. \qedhere
	\]
\end{proof}

With the help of Lemma \ref{lem-gcdDelta}, we can rewrite \eqref{eq-limZ} as 

\begin{align}\label{eq-limZ2}
	\lim_{s\to 1}(s-1)^n \zeta_R(s) = \prod_{l=1}^{n-1} \frac{\varphi(\Delta_l)}{\Delta_l} =\frac{\varphi(\Delta)}{\Delta}\prod_{l=1}^{n-2} \frac{\varphi(\Delta_l)}{\Delta_l}. 
\end{align}

Now combining \eqref{eqCNF} and \eqref{eq-limZ2} with Corollary \ref{cor3.3} and Remark \ref{unit_rem}, we obtain the following: 

\begin{thm}\label{thmCl}
	If $a_n-a_1\geq 4$, then 
	$$
	\abs{\Cl(R)} = \frac{\varphi(\Delta)}{2^{n-1}} \prod_{l=1}^{n-2} \frac{\varphi(\Delta_l)}{\Delta_l}.
	$$
	If $a_n-a_1<4$, then $\abs{\Cl(R)} =1$. 
\end{thm}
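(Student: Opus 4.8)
The plan is to obtain $\abs{\Cl(R)}$ by equating the two evaluations of $\lim_{s\to 1}(s-1)^n\zeta_R(s)$ that are already in place: the specialized Jordan--Poonen formula \eqref{eqCNF} and the computation \eqref{eq-limZ2}. Setting the right-hand sides equal and solving for $\abs{\Cl(R)}$ (the factor $\Delta$ cancels) gives
\[
\abs{\Cl(R)} = \frac{\abs{R^\times}}{2^{n}}\,\varphi(\Delta)\prod_{l=1}^{n-2}\frac{\varphi(\Delta_l)}{\Delta_l}.
\]
When $a_n-a_1\geq 4$, Corollary \ref{cor3.3} gives $\abs{R^\times}=2$, and substituting this yields precisely $\abs{\Cl(R)} = \varphi(\Delta)\,2^{-(n-1)}\prod_{l=1}^{n-2}\varphi(\Delta_l)/\Delta_l$, which is the first assertion.

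For the second assertion I would first note that $a_1<a_2<\cdots<a_n$ forces $a_n-a_1\geq n-1$, so the hypothesis $a_n-a_1<4$ can occur only for $n\in\{2,3,4\}$, and then only for the finitely many configurations of consecutive differences that, up to translation, are enumerated in Remark \ref{unit_rem} (the value of $\abs{R^\times}$ being as recorded there, and equal to $2$ for any such configuration not appearing on that list, e.g.\ $n=2$, $a_2-a_1=3$). For each of these finitely many cases one computes $\Delta$, the gcd's $\Delta_1,\dots,\Delta_{n-2}$, and $\abs{R^\times}$ directly --- for example, for $n=4$ and $\{a_1,a_2,a_3,a_4\}=\{0,1,2,3\}$ one gets $\Delta=12$, $\Delta_1=1$, $\Delta_2=2$, $\abs{R^\times}=8$, so the displayed formula gives $\tfrac{8}{16}\cdot 4\cdot 1\cdot\tfrac12=1$ --- and checks that the displayed expression above always evaluates to $1$. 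This is a short finite verification.

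The genuinely substantive input has already been supplied earlier in the paper: the identity $\disc(R)=\Delta^2$ and the resulting regulator-free shape \eqref{eqCNF} of the class number formula, the factorization of $\zeta_R(s)$ according to the number $A(p)$ of primes of $R$ above $p$, and Lemma \ref{lem-gcdDelta} rewriting ``$A(p)\leq l$'' as ``$p\mid\Delta_l$''. Granting all of that, no real obstacle remains; the only point requiring care is correctly tracking $\abs{R^\times}$ throughout the exceptional range $a_n-a_1<4$, which is exactly the purpose of Remark \ref{unit_rem}.
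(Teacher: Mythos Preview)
Your proposal is correct and follows essentially the same route as the paper: the paper's proof is the one-line remark that the theorem follows ``combining \eqref{eqCNF} and \eqref{eq-limZ2} with Corollary \ref{cor3.3} and Remark \ref{unit_rem},'' which is exactly the equating of the two limits and the substitution of $\abs{R^\times}$ that you carry out. You simply make the finite verification for $a_n-a_1<4$ more explicit than the paper does.
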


\begin{lem} \label{prop_rho}
    For any integers $b_1, b_2, \ldots, b_m$,
    \[
        \prod_{1 \le i < j \le m} (j - i) \mid \prod_{1 \le i < j \le m} (b_j - b_i).
    \]
\end{lem}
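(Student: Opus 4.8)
The plan is to exhibit the integer $\prod_{1\le i<j\le m}(b_j-b_i)$ as a fixed integer multiple of $\prod_{1\le i<j\le m}(j-i)$, by realizing the quotient of these two quantities as the determinant of an explicit integer matrix built from binomial coefficients.

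First I would put the two products into a convenient shape. For fixed $j$ one has $\prod_{i=1}^{j-1}(j-i)=(j-1)!$, hence
\[
\prod_{1\le i<j\le m}(j-i)=\prod_{j=1}^{m}(j-1)!=\prod_{k=0}^{m-1}k!,
\]
while the Vandermonde determinant identity gives
\[
\det\bigl(b_i^{\,j-1}\bigr)_{1\le i,j\le m}=\prod_{1\le i<j\le m}(b_j-b_i).
\]
So it suffices to show that $\prod_{k=0}^{m-1}k!$ divides $\det\bigl(b_i^{\,j-1}\bigr)_{1\le i,j\le m}$.

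The main idea is to replace the monomials $1,x,\dots,x^{m-1}$ by the binomial polynomials $\binom{x}{0},\dots,\binom{x}{m-1}$, where $\binom{x}{k}=\tfrac1{k!}\,x(x-1)\cdots(x-k+1)$. Each $\binom{x}{k}$ has degree $k$ and leading coefficient $1/k!$, so the base-change matrix $A$ whose $j$-th column records the coefficients of $\binom{x}{j-1}$ in the monomial basis is upper triangular with diagonal entries $1/0!,1/1!,\dots,1/(m-1)!$; thus $\det A=1/\prod_{k=0}^{m-1}k!$. Setting $V=\bigl(b_i^{\,j-1}\bigr)_{1\le i,j\le m}$ and $M=\bigl(\binom{b_i}{j-1}\bigr)_{1\le i,j\le m}$ we have $M=VA$, and therefore
\[
\det M=\det V\cdot\det A=\frac{1}{\prod_{k=0}^{m-1}k!}\prod_{1\le i<j\le m}(b_j-b_i).
\]
Now $M\in\Mat_m(\Z)$, since for every integer $b$ and every $k\ge 0$ the number $\binom{b}{k}$ is an integer (for $b<0$ one has $\binom{b}{k}=(-1)^k\binom{k-b-1}{k}$, and for $0\le b<k$ it is $0$). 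Hence $\det M\in\Z$, and $\prod_{k=0}^{m-1}k!=\prod_{1\le i<j\le m}(j-i)$ divides $\prod_{1\le i<j\le m}(b_j-b_i)$, as claimed. No hypothesis that the $b_i$ be distinct is needed: if $b_i=b_j$ for some $i\neq j$ then $M$ has two equal rows and both sides of the divisibility vanish.

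I do not anticipate a serious difficulty; the whole argument rests on choosing the right auxiliary integer matrix, after which computing $\det A$ is routine bookkeeping. The one point worth flagging is that the naive induction on $m$ — stripping off the index $m$ and hoping that $(m-1)!\mid\prod_{i<m}(b_m-b_i)$ — fails in general (for instance $b_1=0$, $b_2=2$, $b_3=3$ gives $\prod_{i<3}(b_3-b_i)=3$, which $2!$ does not divide), so the divisibility genuinely must be handled all at once, and the determinant identity above is precisely what makes that possible.
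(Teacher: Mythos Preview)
Your proof is correct and follows essentially the same approach as the paper: both divide the Vandermonde determinant by $\prod_{k=0}^{m-1}k!$ and recognize the quotient as the determinant of the integer matrix $\bigl(\binom{b_i}{j-1}\bigr)$. The paper phrases the passage from monomials to binomial polynomials as ``suitable row operations'' on the transposed Vandermonde matrix, whereas you express it via right multiplication by the upper-triangular change-of-basis matrix $A$; these are the same computation.
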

\begin{proof}
    Note that $\prod_{1 \le i < j \le m} (j - i) = \prod_{k = 1}^{m - 1} k!$. Now, using the Vandermonde determinant formula and performing suitable row operations, we obtain
    \begin{align*}
        \prod_{1 \le i < j \le m} \frac{b_j - b_i}{j - i} &= 
        \prod_{k = 1}^{m - 1} \frac{1}{k!} \cdot \det
        \begin{bmatrix}
            1 & 1 & \dotsm & 1\\
            b_1 & b_2 & \dotsm & b_m\\
            b_1^2 & b_2^2 & \dotsm & b_m^2 \\
            \dotsm & \dotsm & \dotsm & \dotsm \\
            b_1^{m - 1} & b_2^{m - 1} & \dotsm & b_m^{m - 1}
        \end{bmatrix} \\
        &= \det
        \begin{bmatrix}
            1 & 1 & \dotsm & 1\\
            \binom{{b_1}}{1} & \binom{b_2}{1} & \dotsm & \binom{b_m}{1} \\
            \binom{b_1}{2} & \binom{b_2}{2} & \dotsm & \binom{b_m}{2} \\
            \dotsm & \dotsm & \dotsm & \dotsm \\
            \binom{b_1}{m - 1} & \binom{b_2}{m - 1} & \dotsm & \binom{b_m}{m - 1}
        \end{bmatrix},  
    \end{align*}
   where for any real $x$ and nonnegative integer $h$, $\binom{x}{h}$ denotes $\frac{x(x - 1) \dotsm (x - h + 1)}{h!}$. 
   Since the entries of the last matrix are integers, its determinant is also an integer, which concludes the proof of the lemma. 
\end{proof}

\begin{thm} 
\begin{align*}
    & \liminf_{\Delta\to \infty} \frac{\abs{\Cl(R)} \cdot 2^{n-1}}{\varphi(\Delta)} = 
    \begin{cases}
1, & \text{if } n=2; \\ 
0, & \text{if } n\geq 3.
    \end{cases} \\ 
    & \limsup_{\Delta\to \infty} \frac{\abs{\Cl(R)} \cdot 2^{n-1}}{\varphi(\Delta)} = \prod_{\substack{p \le n - 2 \\ p \text{ is prime}}} (1 - p^{-1})^{n - 1 - p}.
\end{align*}

\end{thm}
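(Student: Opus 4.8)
The plan is to reduce the whole statement to the asymptotics of the single quantity
\[
F \colonequals \prod_{l=1}^{n-2}\frac{\varphi(\Delta_l)}{\Delta_l},
\]
to bound it, and then to exhibit families of tuples $(a_1,\dots,a_n)$ realizing the extremes. By Theorem \ref{thmCl}, once $\Delta$ is large enough that $a_n-a_1\geq 4$ — which holds as soon as $\Delta>3^{\binom{n}{2}}$, since $\Delta\leq(a_n-a_1)^{\binom{n}{2}}$ — one has $\frac{\abs{\Cl(R)}\cdot 2^{n-1}}{\varphi(\Delta)}=F$, so only $F$ matters in the limits. Since $\varphi(m)/m=\prod_{p\mid m}(1-p^{-1})\in(0,1]$, every value of $F$ lies in $(0,1]$; and when $n=2$ the product defining $F$ is empty, so $F\equiv 1$, giving $\liminf=\limsup=1$, consistent with the stated formulas (the product on the right being empty for $n=2$).

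First I would prove the upper bound $F\leq\prod_{p\leq n-2}(1-p^{-1})^{\,n-1-p}$. Set $\rho_l\colonequals\prod_{1\leq i<j\leq l+1}(j-i)=\prod_{k=1}^{l}k!$. Applying Lemma \ref{prop_rho} with $b_a=a_{i_a}$ to an arbitrary subset $S=\{i_1<\cdots<i_{l+1}\}$ shows $\rho_l\mid\Delta_S$, hence $\rho_l\mid\Delta_l$. The primes dividing $\rho_l$ are exactly the primes $\leq l$; since each then divides $\Delta_l$, and adjoining further prime factors only decreases $\varphi(m)/m$, we get $\varphi(\Delta_l)/\Delta_l\leq\varphi(\rho_l)/\rho_l=\prod_{p\leq l}(1-p^{-1})$. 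Taking the product over $l=1,\dots,n-2$ and counting, for each prime $p$, the number of indices $l$ with $p\leq l\leq n-2$ (namely $n-1-p$ if $p\leq n-2$, and $0$ otherwise) yields the bound, and in particular $\liminf F\leq\limsup F\leq\prod_{p\leq n-2}(1-p^{-1})^{\,n-1-p}$.

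Next I would show this value is approached. Take the arithmetic progressions $a_i=iM$ with $M$ a prime $>n$. For $S=\{i_1<\cdots<i_{l+1}\}$ one has $\Delta_S=M^{\binom{l+1}{2}}\prod_{a<b}(i_b-i_a)$, and Lemma \ref{prop_rho} shows that the gcd over all such $S$ of $\prod_{a<b}(i_b-i_a)$ equals $\rho_l$ — the subset $\{1,\dots,l+1\}$ attains it, and $\rho_l$ divides all the others — so $\Delta_l=M^{\binom{l+1}{2}}\rho_l$ for $1\leq l\leq n-1$. As $M$ is prime, $M>l$, and $\binom{l+1}{2}\geq 1$ for $l\geq 1$, we get $\varphi(\Delta_l)/\Delta_l=(1-M^{-1})\,\varphi(\rho_l)/\rho_l$, whence
\[
F=(1-M^{-1})^{\,n-2}\prod_{p\leq n-2}(1-p^{-1})^{\,n-1-p}.
\]
Letting $M\to\infty$ through primes, $\Delta=\Delta_{n-1}=M^{\binom{n}{2}}\rho_{n-1}\to\infty$ while $F\to\prod_{p\leq n-2}(1-p^{-1})^{\,n-1-p}$; together with the upper bound this proves the $\limsup$ formula for all $n$.

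Finally, for the $\liminf$ when $n\geq 3$ I would make $F$ arbitrarily small. Take $a_i=im$ with $m$ running over the primorials $m=2\cdot3\cdot5\cdots p_k$. Here $\Delta_1=\gcd_{i<j}\bigl((j-i)m\bigr)=m$ (the value $j-i=1$ occurs), so the $l=1$ factor of $F$ equals $\varphi(m)/m=\prod_{i=1}^{k}(1-p_i^{-1})$, while all remaining factors are $\leq1$; hence $F\leq\prod_{i=1}^{k}(1-p_i^{-1})$, which tends to $0$ as $k\to\infty$ because $\sum_p p^{-1}$ diverges, whereas $\Delta=m^{\binom{n}{2}}\rho_{n-1}\to\infty$. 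Since also $F>0$ always, this gives $\liminf F=0$ for $n\geq3$. The step carrying the real content is the identity $\Delta_l=M^{\binom{l+1}{2}}\rho_l$ for arithmetic progressions — and, underlying both halves, the divisibility $\rho_l\mid\Delta_l$ — which is precisely what Lemma \ref{prop_rho} supplies; everything else is elementary bookkeeping with $\varphi(m)/m$ and with the exponent count $\#\{l:1\leq l\leq n-2,\ p\leq l\}$.
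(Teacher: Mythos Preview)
Your proof is correct and follows essentially the same approach as the paper: the same reduction to $F=\prod_{l=1}^{n-2}\varphi(\Delta_l)/\Delta_l$, the same upper bound via $\rho_l\mid\Delta_l$ from Lemma \ref{prop_rho}, and the same arithmetic-progression construction for the $\liminf$. The only difference is in the sequence realizing the $\limsup$: the paper fixes $a_1=1,\dots,a_{n-1}=n-1$ and lets $a_n\to\infty$, which gives $\Delta_l=\rho_l$ \emph{exactly} for $l\leq n-2$ (since $\{1,\dots,l+1\}$ is one of the subsets in the gcd), so $F$ hits the target value on the nose; your choice $a_i=iM$ with $M$ prime $>n$ introduces the harmless factor $(1-M^{-1})^{n-2}$ and only approaches the target. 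Both work, but the paper's sequence is marginally cleaner.
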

\begin{proof} For $n=2$, the claims immediately follow from Theorem \ref{thmCl}. 
    Now suppose $n\geq 3$. Then 
    $$\frac{\abs{\Cl(R)} \cdot 2^{n-1}}{\varphi(\Delta)}\leq \varphi(\Delta_1)/\Delta_1 =
    \prod_{p\mid \Delta_1}\left(1-p^{-1}\right). 
    $$
    This last product tends to $1/\zeta_\Z(1)=0$ if $\Delta_1$ is divisible by the product of primes $\prod_{p\leq N} p$ and $N\to \infty$. Let $m\geq 1$ be an arbitrary integer. Let $a_i=a_1+(i-1)m$, $1\leq i\leq n$. Since for this choice of $a_i$'s the difference $a_j-a_i$ is divisible by $m$ for all $1\leq i<j\leq n$, we see that $\Delta_1$ is divisible by $m$. This proves the claim about limit inferior.  
    
     For a positive integer $m$ denote
    \[
    \rho_m = \prod_{1 \le i < j \le m + 1} (j - i).
    \]
For $l = 1, 2, \ldots, n - 2$, if $S \subseteq \{1, 2, \ldots, n\}$ and $\abs{S} = l + 1$, then by Lemma \ref{prop_rho}, $\rho_l \mid \Delta_S$. Therefore $\rho_l \mid \Delta_l$, for $l = 1, 2, \ldots, n - 2$. Hence, if $a_n - a_1 \ge 4$, then
   \begin{align*}
        \frac{\abs{\Cl(R)} \cdot 2^{n - 1}}{\varphi(\Delta)} & = \prod_{l = 1}^{n - 2} \frac{\varphi(\Delta_l)}{\Delta_l} \le \prod_{l = 1}^{n - 2} \frac{\varphi(\rho_l)}{\rho_l} = \prod_{l = 1}^{n - 2} \prod_{\substack{p \le l \\ p \text{ is prime}}} (1 - p^{-1}) \\ &= \prod_{\substack{p \le n - 2 \\ p \text{ is prime}}} (1 - p^{-1})^{n - 1 - p}.
   \end{align*}
    On the other hand, if we take $a_1 = 1, a_2 = 2, \ldots, a_{n - 1} = n - 1$, but allow $a_n \rightarrow \infty$, then $\rho_l = \Delta_l$, for $l = 1, 2, \ldots, n - 2$. Thus,
    \[
        \limsup_{n \rightarrow \infty} \frac{\abs{\Cl(R)} \cdot 2^{n - 1}}{\varphi(\Delta)} = \prod_{\substack{p \le n - 2 \\ p \text{ is prime}}} (1 - p^{-1})^{n - 1 - p}. \qedhere
    \]
\end{proof}


\section{Examples of small degree}\label{sExamples}
We have obtained a closed formula for the order of $\Cl(R)$ in Theorem \ref{thmCl}. The goal 
of this section is to determine the structure of the group $\Cl(R)$ for $n=2$ and $n=3$. In fact, for $n=2$, we also compute the monoid  $\ICM(R)$. Extending these calculations to $n\geq 4$ seems quite complicated, although there are algorithms for doing so for any specific choice of $a_1, \dots, a_n$; see \cite{Marseglia}.  

\subsection{Quadratic case} 	
	Assume $n=2$. Denote $\chi(x)=(x-a)(x-b)$ with $a<b$, and, with abuse of notation, denote by $x$ the image of $x\in \Z[x]$ in $R=\Z[x]/(\chi(x))$. 
	
	\begin{prop}\label{propICM2} The ideals 
		$$
		(x-b, 1), \quad (x-b, 2), \quad \dots \quad (x-b, \lfloor (b-a)/2\rfloor), \quad (x-b, b-a)  
		$$
		form a complete set of representatives of the equivalence classes of fractional ideals of $R$. The multiplication in $\ICM(R)$ is given by the 
		formula 
		$$
			(x-b, u)(x-b, v) \sim (x-b, uv/\gcd(u, v, a-b)). 
		$$
		Moreover, for any integer $w$ not divisible by $b-a$, we have 
		$$
		(x-b, w)=(x-b, -w) \sim (x-b, w\pm (b-a)). 
		$$
	\end{prop}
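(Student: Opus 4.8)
The plan is to reduce everything to explicit linear algebra via the Latimer--MacDuffee correspondence (Theorem~\ref{thmLM}) together with the description of $\Z$-conjugacy classes from Section~\ref{sICM}. First I would record the relations holding in $R$: from $x^2=(a+b)x-ab$ one gets $x(x-b)=a(x-b)$ and $(x-b)^2=(a-b)(x-b)$, hence $(c+dx)(x-b)=(c+da)(x-b)$ for all $c,d\in\Z$, so $R\cdot(x-b)=\Z\cdot(x-b)$. Consequently, for every nonzero $w\in\Z$ the fractional ideal $(x-b,w)=R(x-b)+wR$ equals the $\Z$-submodule $\Z(x-b)+\Z w$ of $R$ (since $wx=w(x-b)+bw$), which is free of rank $2$ with basis $\{x-b,\,w\}$. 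In particular $(x-b,w)$ really is a fractional ideal, and $(x-b,w)=(x-b,-w)$ as ideals, since $\Z w=\Z(-w)$.

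Next I would compute the matrix of multiplication by $x$ on $(x-b,w)$ in the basis $\{x-b,w\}$: since $x(x-b)=a(x-b)$ and $xw=w(x-b)+bw$, this matrix is $M_w=\begin{bmatrix} a & w\\ 0 & b\end{bmatrix}\in\Omega$, with characteristic polynomial $\chi$. By the construction in the proof of Theorem~\ref{thmLM}, the class of $(x-b,w)$ in $\ICM(R)$ is the $\Z$-conjugacy class of $M_w$, so $(x-b,w)\sim(x-b,w')$ iff $M_w\sim_{\Z}M_{w'}$. Section~\ref{sICM} finishes this: the lemma there showing that a matrix conjugating two elements of $\Omega$ lies in $G_2=D_2U_2$, together with Proposition~\ref{propOrbits}(1)--(2), gives $M_w\sim_{U_2}M_c$ with $0\le c<b-a$ and $c\equiv w\pmod{b-a}$, distinct such $c$ lying in distinct $U_2$-orbits; and conjugating by $\diag(-1,1)\in D_2$ sends $M_c$ to $M_{-c}\sim_{U_2}M_{(b-a)-c}$. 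Hence
$$(x-b,w)\sim(x-b,w')\iff w\equiv\pm w'\pmod{b-a}.$$
The last display of the proposition follows at once, the hypothesis $b-a\nmid w$ ensuring that $w,\,-w,\,w\pm(b-a)$ are all nonzero, so that each symbol denotes an actual fractional ideal.

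For the completeness assertion, consider the map $w\mapsto[(x-b,w)]$ from $\{1,2,\dots,\lfloor(b-a)/2\rfloor\}\cup\{b-a\}$ to $\ICM(R)$. The congruence criterion makes it injective: for $w,w'\in\{1,\dots,\lfloor(b-a)/2\rfloor\}$ the relation $w\equiv\pm w'\pmod{b-a}$ forces $w=w'$ (in the $-$ case it would need $w+w'=b-a$, possible only if $w=w'=(b-a)/2$), while $[(x-b,w)]=[(x-b,b-a)]$ would force $b-a\mid w$, impossible for $0<w<b-a$. Since $\abs{\ICM(R)}=\lfloor(b-a)/2\rfloor+1$ by Lemma~\ref{exampleICM2}, the map is a bijection, so the listed ideals form a complete set of representatives. (Surjectivity can also be checked directly: any fractional ideal corresponds under Theorem~\ref{thmLM} to a matrix with characteristic polynomial $\chi$, which by Lemma~\ref{lemUpTri} and Proposition~\ref{propOrbits}(1) is $\Z$-conjugate to some $M_c$ with $0\le c<b-a$; for $c\ge1$ this is $[(x-b,c)]$, and for $c=0$, since $M_0=\diag(a,b)\sim_{U_2}M_{b-a}$, it is $[(x-b,b-a)]$.)

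Finally, the multiplication formula I would verify directly in $R$: the product $(x-b,u)(x-b,v)$ is the $R$-module generated by $(x-b)^2=(a-b)(x-b)$, $u(x-b)$, $v(x-b)$, $uv$, and since $R(x-b)=\Z(x-b)$ the first three of these span $g\,\Z(x-b)=R\cdot g(x-b)$ with $g=\gcd(u,v,a-b)$; hence $(x-b,u)(x-b,v)=\bigl(g(x-b),\,uv\bigr)=g\cdot\bigl(x-b,\,uv/g\bigr)$ (using $g\mid uv$), and multiplication by $g\in K^\times$ is an equivalence, giving $(x-b,u)(x-b,v)\sim(x-b,\,uv/\gcd(u,v,a-b))$; one then reduces the right-hand side into the displayed list using the congruence criterion. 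The step needing genuine care is the identification of $M_w$ -- pinning down the correct $\Z$-basis of $(x-b,w)$ and the resulting matrix -- after which the argument is pure congruence bookkeeping; a point worth flagging is that $(x-b,b-a)$ represents the class of the maximal order $\Z\times\Z\subset K$, which is distinct from the principal class $[R]=[(x-b,1)]$ as soon as $b-a\ge2$.
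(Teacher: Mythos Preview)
Your proof is correct and follows the same overall strategy as the paper: use the Latimer--MacDuffee bijection together with the orbit analysis of Section~\ref{sICM} to classify the ideals, then compute the product directly in $R$. The one notable implementation difference is in how you link ideals to matrices. The paper starts from the matrix $\begin{bmatrix} a & u\\ & b\end{bmatrix}$, finds an explicit $A\in\GL_2(\Q)$ conjugating it to the companion matrix of $\chi$, and then reads off the image of $\Z^2$ inside $K$ as the ideal $(x-b,u)$; this forces a small case analysis ($ab=0$ or not, $u=0$ or not). You instead start from the ideal $(x-b,w)$, observe from $(x-b)^2=(a-b)(x-b)$ that $R(x-b)=\Z(x-b)$ so that $\{x-b,w\}$ is already a $\Z$-basis, and compute the matrix of $x$ in that basis to get $M_w$ directly. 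Your direction is cleaner here and avoids the case split; the paper's direction has the advantage of showing constructively how to pass from an arbitrary matrix to an ideal, which it reuses in the cubic case. The multiplication computation and the congruence bookkeeping are essentially identical in both arguments.
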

	\begin{proof}
	First, we make the isomorphisms used in the proof of Theorem \ref{thmLM} more explicit. 
	Consider $R$ as a free $\Z$-module, equipped with its natural $R$-module structure.  With respect to the $\Z$-basis $\{1, x\}$, the action of $x$ 
	on $\Z^2$ as on column vectors is given by the matrix $\begin{bmatrix}  & -ab\\ 1 & (a+b)\end{bmatrix}$. We extend this action to $V_1=\Q\otimes_\Z R\cong\Q^2$. 
	Next, let $M=\Z^2$ be the $R$-module on which $x$ acts by the matrix $\begin{bmatrix} a & u\\  & b\end{bmatrix}$; again we extend this action to $V_2=\Q\otimes_\Z M\cong\Q^2$. 
    
    The $K$-modules $V_1$ and $V_2$ are isomorphic, with the isomorphism $\theta\colon V_2\to V_1$ given by 
	a matrix $A\in \GL_2(\Q)$ such that  
    $$
	A \begin{bmatrix} a & u\\  & b\end{bmatrix} = \begin{bmatrix}  & -ab\\ 1 & (a+b)\end{bmatrix} A. 
	$$
    When $ab\neq 0$, one can take $A=\begin{bmatrix} ab & \\ -a & -u\end{bmatrix}$ if $u\neq 0$ or $A=\begin{bmatrix} b & a \\ -1 & -1\end{bmatrix}$ if $u=0$. When $a=0$, one can take $A=\begin{bmatrix} -b & u\\ 1 & \end{bmatrix}$ if $u\neq 0$ or $A=\begin{bmatrix} -b &  \\ 1 & 1\end{bmatrix}$ if $u=0$. When $b=0$, one 
    can take $A=\begin{bmatrix}  & u-a\\ 1 & 1\end{bmatrix}$ if $u\neq a$ or 
    $A=\begin{bmatrix}  & a\\ 1 & \end{bmatrix}$ if $u=a$. 
	
	Note that, by construction, $\theta(\Z^2)$ is an $R$-submodule of $K=V_1$. In fact, $\theta(\Z^2)$ lies in $R$, so it is an integral ideal $I_u\lhd R$. The generators 
	of $I_u$ are given by the images of the standard basis of $\Z^2$ under $\theta$. Assuming $ab\neq 0$ (the calculations are similar in the other cases), we get $I_u=(ab-ax, -ux)$ if $u\neq 0$, and $I_u=(a-x, b-x)$ if $u=0$. 
	Since $ab=-x^2+(a+b)x$, we have 
	$$
	(ab-ax, -ux) = (-x^2+(a+b)x-ax, ux) = (x(b-x), ux)\sim (x-b, u). 
	$$
	Also, note that $(a-x, b-x) = (x-b, b-a)$. From Theorem \ref{thmLM} and the proof of Lemma \ref{exampleICM2}, we conclude that the 
	ideals 
	$$
	(x-b, 1), \quad (x-b, 2), \quad \dots \quad (x-b, \lfloor (b-a)/2\rfloor), \quad (x-b, b-a)
	$$
	form a complete set of representatives of the equivalence classes of fractional ideals of $R$, and moreover  
	$(x-b, w)\sim (x-b, w\pm (b-a))$ for any integer $w$ not divisible by $(b-a)$. 
	
	Finally, we compute 
	\begin{align*}
	(x-b, u)(x-b, v) &= ((x-b)^2, (x-b)u, (x-b)v, uv) \\ 
	 & = (x^2-2xb+b^2, (x-b)\gcd(u, v), uv) \\ 
	  & = ((a+b)x-ab-2xb+b^2, (x-b)\gcd(u, v), uv) \\ 
	  & = ((a-b)x-(a-b)b, (x-b)\gcd(u, v), uv) \\ 
	  & = ((a-b)(x-b), (x-b)\gcd(u, v), uv) \\ 
	   & = ((x-b)\gcd(u, v, a-b), uv) \\ 
	   & \sim ((x-b), uv/\gcd(u, v, a-b)). \qedhere
	\end{align*}
	\end{proof}
	
\begin{thm}
	If $(b-a)\leq 2$, then $\Cl(R)$ is the trivial group. If $(b-a)>2$, then $\Cl(R)$ is isomorphic to the quotient of $(\Z/(b-a))^\times$ by the subgroup $\{\pm 1\}$.   
\end{thm}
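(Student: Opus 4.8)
The plan is to realize $\Cl(R)$ explicitly as a quotient of $(\Z/(b-a))^\times$, building on the description of $\ICM(R)$ in Proposition \ref{propICM2}. Set $N=b-a$ and consider the map
\[
\psi\colon (\Z/N)^\times \To \ICM(R),\qquad u\longmapsto [(x-b,u)].
\]
First I would check $\psi$ is well defined: if $\gcd(u,N)=1$ then $N\nmid u$, so by the last assertion of Proposition \ref{propICM2} the class $[(x-b,u)]$ is unchanged when $u$ is replaced by $u\pm N$, hence depends only on $u\bmod N$. Next, $\psi$ is a homomorphism: for $u,v$ coprime to $N$ the integer $\gcd(u,v,a-b)$ divides $\gcd(u,a-b)=\gcd(u,N)=1$, so the multiplication formula of Proposition \ref{propICM2} gives $\psi(u)\psi(v)=[(x-b,uv)]=\psi(uv)$. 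Since $(x-b,1)=R$ is the identity of $\ICM(R)$, each $\psi(u)$ is a unit of $\ICM(R)$ with inverse $\psi(u^{-1})$, so $\psi$ in fact takes values in $\Cl(R)$.

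The key point is the kernel, and I claim $\ker\psi=\{\pm 1\}$. Indeed $\psi(u)=[(x-b,1)]$ means $(x-b,u)\sim(x-b,1)$; since $N\nmid u$ and $N\nmid 1$, and since by Proposition \ref{propICM2} the ideals $(x-b,1),\dots,(x-b,\lfloor N/2\rfloor),(x-b,N)$ are pairwise inequivalent representatives with $(x-b,w)\sim(x-b,-w)\sim(x-b,w\pm N)$, reducing $u$ modulo $N$ and, if necessary, replacing it by $N-(u\bmod N)$ shows $(x-b,u)\sim(x-b,1)$ exactly when $u\equiv\pm 1\pmod N$. Hence $\psi$ descends to an injective homomorphism
\[
\overline{\psi}\colon (\Z/N)^\times/\{\pm 1\}\hookrightarrow \Cl(R).
\]

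To conclude that $\overline{\psi}$ is an isomorphism I would compare orders. By Theorem \ref{thmCl} with $n=2$ (so $\Delta=N$) one has $\abs{\Cl(R)}=\varphi(N)/2$ when $N\ge 4$ and $\abs{\Cl(R)}=1$ when $N\le 3$. On the other hand $\abs{(\Z/N)^\times/\{\pm 1\}}$ equals $1$ for $N\le 3$ (for $N\le 2$ the group $(\Z/N)^\times$ is already trivial, and for $N=3$ one has $-1\equiv 2$) and equals $\varphi(N)/2$ for $N\ge 4$, since then $-1\ne 1$ in $(\Z/N)^\times$. So $\overline{\psi}$ is an injection between finite groups of equal cardinality, hence a bijection, giving $\Cl(R)\cong(\Z/(b-a))^\times/\{\pm 1\}$ for every $N\ge 1$; in particular $\Cl(R)$ is trivial when $b-a\le 2$. (One can instead avoid Theorem \ref{thmCl}: every class in $\ICM(R)$ is some $[(x-b,m)]$, and a short argument with the $\gcd$ in the product formula shows $[(x-b,m)]$ is a unit only if $\gcd(m,N)=1$, which puts it in the image of $\psi$.)

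I do not expect a serious obstacle; the only delicate bookkeeping is tracking the relations $(x-b,w)\sim(x-b,-w)\sim(x-b,w\pm N)$ when computing the kernel, and observing that the $\gcd(u,v,a-b)$ factor in the multiplication law drops out precisely because $u$ and $v$ are prime to $N$.
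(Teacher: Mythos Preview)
Your proposal is correct and follows essentially the same route as the paper: both define the homomorphism $u\mapsto[(x-b,u)]$ from $(\Z/(b-a))^\times$ to $\Cl(R)$ via Proposition \ref{propICM2}, identify $\{\pm 1\}$ in the kernel, and finish by comparing orders. The only minor difference is that you invoke Theorem \ref{thmCl} for $\abs{\Cl(R)}$, whereas the paper computes this order directly by characterizing the invertible classes among the representatives of Proposition \ref{propICM2} (namely those $(x-b,u)$ with $\gcd(u,b-a)=1$); you note this alternative yourself in your parenthetical remark.
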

\begin{proof} We use the notation of Proposition \ref{propICM2}. 
	Obviously $(x-b, 1)=R$ is the identity of $\Cl(R)$. Moreover, the ideal $(x-b, u)$ is invertible if and only if $u$ is coprime to $(b-a)$. 
	To see this, note that $u$ is coprime to $(b-a)$ if and only if there is $v$ such that $uv\equiv 1\Mod{b-a}$. For such $v$ we have 
	\begin{align*}(x-b, u)(x-b, v)&\sim (x-b, uv/\gcd(u, v, a-b))= (x-b, uv) \\ &\sim (x-b, 1)=R,
		\end{align*}
		where the equivalence relation is defined as in Proposition \ref{propICM2}. 
	There are exactly 
	$$
	\begin{cases}
		1, & \text{if }(b-a)=1,2; \\ 
		\frac{\varphi(b-a)}{2}  & \text{if }(b-a)>2
	\end{cases}
	$$
	numbers in the set $\{1,2, \dots, \lfloor (b-a)/2\rfloor, (b-a)\}$ that are coprime to $b-a$. Since this is the order of $\Cl(R)$, 
	we conclude that the ideals 
	$$
	\{ (x-b, u)\mid 1\leq u\leq (b-a)/2, \quad \gcd(u, b-a)=1\} 
	$$
	are a complete set of representatives of the ideal classes in $\Cl(R)$. 
	
	By the above discussion, the map $(\Z/(b-a))^\times\to \Cl(R)$, $u\mapsto (x-b, u)$, is a surjective homomorphism, whose kernel 
	contains $\{\pm 1\}$. Since the quotient of $(\Z/(b-a))^\times$ by $\{\pm 1\}$ has the same order as $\Cl(R)$, these groups are isomorphic. 
\end{proof}

\subsection{Cubic case} Assume $n=3$. Denote $\chi(x)=(x-a)(x-b)(x-c)$ with $a<b<c$. By the same abuse of notation as in the quadratic case we will denote by $x$ the image of $x\in \Z[x]$ in $R=\Z[x]/(\chi(x))$. In Remark \ref{unit_rem} it was discussed that $\Cl(R)$ is trivial when $c - a \le 3$. If $c-a\geq 4$, then 
\begin{equation} \label{formula_cl} 
    \abs{\Cl(R)} = \frac{\varphi(\Delta)}{4} \frac{\varphi(\Delta_1)}{\Delta_1},
\end{equation}
where $\Delta_1=\gcd(c-a, b-a, c-b)$. 
The following theorem gives a complete description for $\Cl(R)$. 
\begin{thm} \label{thm_clr}
    If $c - a \le 3$, then $\Cl(R)$ is trivial. If $c - a \ge 4$, then $\Cl(R)$ is isomorphic to the quotient of the group 
    \[
        G =\Big(\Z/(b - a)(c - a)\Big)^\times \times \Big(\Z/(c - b)(c - a)\Big)^\times
    \] 
    by the subgroup $N$ which can be represented as a direct product $HK$ of the subgroups $H$ and $K$, where
    \[
        H = \{(\overline{1}, \overline{1}), (\overline{1}, -\overline{1}), (-\overline{1}, \overline{1}), (-\overline{1}, -\overline{1})\}, 
    \]
    \[
        K = \Biggr \{ (\overline{u}, \overline{v}) \in G : b - a \mid u - 1, c - b \mid v - 1, c - a \mid \frac{u(v - 1)}{c - b} - \frac{u - 1}{b - a} \Biggr \}. 
    \]
\end{thm}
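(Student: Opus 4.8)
The case $c-a\le 3$ is immediate: by Theorem \ref{thmCl} we have $\abs{\Cl(R)}=1$, so $\Cl(R)$ is trivial. For the rest assume $c-a\ge 4$, and abbreviate $d_1=b-a$, $d_2=c-b$, $d_3=c-a=d_1+d_2$, so that $\Delta=d_1d_2d_3$ and $\Delta_1=\gcd(d_1,d_2)$; by \eqref{formula_cl} we know $\abs{\Cl(R)}=\tfrac14\,\varphi(\Delta)\,\varphi(\Delta_1)/\Delta_1$. The plan is to imitate the quadratic case (Proposition \ref{propICM2} and the theorem following it): produce an explicit family of invertible fractional ideals $I_{u,v}$ indexed by $(u,v)\in G$, establish a product rule $I_{u,v}\cdot I_{u',v'}\sim I_{uu',vv'}$, show that every invertible class is represented by some $I_{u,v}$, and then read off the relations among them.

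To set up the family I would proceed as in the proof of Proposition \ref{propICM2}, via Theorem \ref{thmLM}. Realize $R\hookrightarrow\Z^3$ by the map $\phi$ of Lemma \ref{lemR'}, so that $R$ is a $\Z$-order in $\cO_K=\Z^3$; a short computation (an element of the form $(m,0,0)$ lies in $\phi(R)$ exactly when $(x-b)(x-c)$ divides an interpolating polynomial, forcing $d_1d_3\mid m$, and symmetrically for the other two idempotents) shows that the conductor $\fc$ of $R$ in $\Z^3$ is $d_1d_3\Z\times d_1d_2\Z\times d_2d_3\Z$, which is where the moduli $(b-a)(c-a)$ and $(c-b)(c-a)$ in $G$ originate. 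Next, by Lemma \ref{lemUpTri} and Proposition \ref{propOrbits}, every class in $\ICM(R)$ has a representative matrix $S=\left[\begin{smallmatrix}a&x&y\\&b&z\\&&c\end{smallmatrix}\right]\in\Omega_0$ with $0\le x<d_1$, $0\le y<d_3$, $0\le z<d_2$; computing the centralizer of $S$ in $\Mat_3(\Z)$ (it consists of the integral elements of $\Q[S]$) and using Remark \ref{remGorenstein}, the class of $S$ is invertible precisely when this centralizer equals $\Z[S]$, which translates into coprimality and congruence conditions on $x,y,z$ relative to $d_1,d_2,d_3$. For each invertible $S$ one writes down the corresponding ideal $I_{u,v}\subset K$ through the left eigenvectors of $S$, exactly as the ideals $I_u$ were produced in Proposition \ref{propICM2}; the parameters $(u,v)$ then range over $G$ (more precisely over a fundamental domain for the $\pm1$-symmetries), and a direct multiplication of generators yields $I_{u,v}I_{u',v'}\sim I_{uu',vv'}$. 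This gives a surjective homomorphism $\psi\colon G\to\Cl(R)$, $(u,v)\mapsto[I_{u,v}]$.

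It remains to compute $\ker\psi=\{(u,v):I_{u,v}\sim R\}$. There are two sources of relations. First, the change-of-basis symmetries of the construction (the action of $D_3=\{\pm1\}^3$, analogous to the identity $(x-b,u)=(x-b,-u)$ in Proposition \ref{propICM2}) give $I_{u,v}\sim I_{-u,v}\sim I_{u,-v}$, so $H=\{(\pm1,\pm1)\}\subseteq\ker\psi$. Second, unwinding the principality condition $I_{u,v}=\gamma R$ for $\gamma\in K^\times$ — equivalently, through $\phi$ and $\fc$, the condition that the image of $I_{u,v}$ in $\cO_K/\fc$ be $\bar\gamma\,(R/\fc)$ with $\gamma$ a unit — produces the congruences cutting out $K$: the first two, $b-a\mid u-1$ and $c-b\mid v-1$, are straightforward, and the third, $c-a\mid\frac{u(v-1)}{c-b}-\frac{u-1}{b-a}$, is the Newton divided difference $f[b,c]-f[a,b]$ of an interpolating polynomial and is exactly the extra obstruction to principality. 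Combining, $\ker\psi=HK$. One then checks $H\cap K=\{(1,1)\}$ (immediate when $d_1,d_2\ge 3$, and by inspection otherwise), so that $HK$ is internally a direct product; and, as a check that closes the argument without a separate proof of $\ker\psi\subseteq HK$, one confirms $\abs{G/HK}=\tfrac14\varphi(\Delta)\varphi(\Delta_1)/\Delta_1$, matching $\abs{\Cl(R)}$ by \eqref{formula_cl}.

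The bulk of the work, and the main obstacle, is the explicit bookkeeping in the second paragraph: pinning down exactly which matrices in $\Omega_0$ have endomorphism ring $R$, writing the associated ideals and their products in closed form so that $\psi$ is visibly multiplicative, and — above all — recognizing the principality criterion in the normalized divided-difference shape $c-a\mid\frac{u(v-1)}{c-b}-\frac{u-1}{b-a}$ rather than in one of the many equivalent but less symmetric forms. A secondary nuisance is the finitely many degenerate cases where $d_1$ or $d_2$ equals $1$ or $2$ (some $\pm1$'s collapse, extra sign patterns enter, $H\cap K$ can grow); these are handled separately, as the analogous cases were in Remark \ref{unit_rem}.
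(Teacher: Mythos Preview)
Your overall strategy --- construct a homomorphism $\psi\colon G\to\Cl(R)$, identify $\ker\psi=HK$, and close by comparing $\abs{G/HK}$ with the known $\abs{\Cl(R)}$ from \eqref{formula_cl} --- is exactly the paper's. The execution, however, is more roundabout than what the paper actually does, in two respects worth noting.

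First, you plan to start from the three-parameter family $\Omega_0$ and then isolate the invertible classes via the centralizer criterion of Remark~\ref{remGorenstein}. The paper bypasses this entirely: it works with the two-parameter family of \emph{lower}-triangular matrices $\left[\begin{smallmatrix}a&&\\u&b&\\&v&c\end{smallmatrix}\right]$ (no $(3,1)$ entry), writes down the associated ideal $(uv,\,v(x-a),\,(x-a)(x-b))$, checks well-definedness on $G$, and verifies the product rule by a direct ideal multiplication. Since $G$ is a group and $\psi$ is a monoid homomorphism, $\im\psi\subseteq\Cl(R)$ is automatic; no invertibility analysis is ever needed. Second, you propose to extract $\ker\psi$ from the conductor $\fc$ and a principality criterion in $\cO_K/\fc$. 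The paper instead reads $\ker\psi$ off a single matrix equation: $(u,v)\in\ker\psi$ iff the above matrix is conjugate by a lower-triangular element of $\GL_3(\Z)$ to the one with $u=v=1$, and the three resulting scalar equations are literally the three divisibilities defining $K$ (after absorbing signs into $H$). Your conductor route would also reach this, but at greater cost.

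Two smaller points. Your concern about degenerate cases $d_1,d_2\in\{1,2\}$ is unnecessary: once $c-a\ge4$ the paper's argument runs uniformly and $H\cap K=\{(1,1)\}$ holds without case splits. And surjectivity of $\psi$ is obtained only at the end, by the order comparison you mention; your earlier sentence ``show that every invertible class is represented by some $I_{u,v}$'' should be deferred to that step rather than posed as a separate task.
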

It turned out that for our proof of this theorem lower triangular matrices have some advantages over upper triangular matrices, so we shall work with those. 
Consider the following lower triangular integer matrix:
\begin{equation*}
    A = 
    \begin{bmatrix}
        a & & \\
        u & b &\\
        & v & c
    \end{bmatrix}.
\end{equation*}
\begin{prop}
    If $uv \ne 0$, then the $R$-ideal class of the fractional ideals of $R$ that corresponds to the $\Z$-conjugacy class of the matrix $A$ is the class of the ideal 
    $$I = (uv, v(x - a), (x - a)(x - b)).$$ 
\end{prop}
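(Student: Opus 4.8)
The plan is to make the Latimer--MacDuffee correspondence of Theorem~\ref{thmLM} explicit for the matrix $A$, in the spirit of the proof of Proposition~\ref{propICM2}. By the constructions in Lemma~\ref{lemModICM} and Theorem~\ref{thmLM}, the $\Z$-conjugacy class of $A$ corresponds to the ideal class of $\theta(\Z^3)\subset K$ for any isomorphism of $K$-modules $\theta\colon(\Q^3,A)\to K$, where $(\Q^3,A)$ denotes $\Q^3$ with $x$ acting as $A$ and $K$ carries its multiplication action; any two such $\theta$ differ by multiplication by an element of $K^\times$, so the resulting ideal class is well defined. I would therefore exhibit one explicit $\theta$ and compute the images $\theta(e_1),\theta(e_2),\theta(e_3)$ of the standard basis vectors of $\Q^3$.

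The crucial observation is that, because $uv\neq 0$, the vector $e_1$ is cyclic for $A$: from $Ae_1=ae_1+ue_2$ and $A^2e_1=a^2e_1+u(a+b)e_2+uv\,e_3$ one reads off that the matrix with columns $e_1,Ae_1,A^2e_1$ is upper triangular with diagonal entries $1,u,uv$, hence has nonzero determinant. So there is a unique $K$-linear isomorphism $\theta\colon(\Q^3,A)\to K$ with $\theta(e_1)=1$, and it satisfies $\theta(A^je_1)=x^j$. Solving the two relations above for $e_2$ and $e_3$ and applying $\theta$ gives
\[
\theta(e_1)=1,\qquad \theta(e_2)=\frac{x-a}{u},\qquad \theta(e_3)=\frac{(x-a)(x-b)}{uv},
\]
where the last equality uses $x^{2}-a^{2}-(a+b)(x-a)=(x-a)(x-b)$.

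Hence $\theta(\Z^3)$ is the $\Z$-lattice spanned by $1$, $\tfrac{x-a}{u}$ and $\tfrac{(x-a)(x-b)}{uv}$; scaling it by the unit $uv\in K^\times$ gives an equivalent fractional ideal $L$, the $\Z$-lattice spanned by $uv$, $v(x-a)$ and $(x-a)(x-b)$. Since $uv\cdot\theta$ is $R$-linear and $\Z^3$ is an $R$-submodule of $\Q^3$, the lattice $L$ is an $R$-submodule of $K$; being an $R$-module containing the three generators of $I=(uv,v(x-a),(x-a)(x-b))$, it contains $I$, while the reverse inclusion $L\subseteq I$ is clear. Thus $L=I$, which would show that the $\Z$-conjugacy class of $A$ corresponds to the ideal class of $I$.

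The main obstacle --- really the only nontrivial point --- will be the identification $L=I$: I must be sure the $\Z$-lattice produced by the computation is the \emph{entire} $R$-ideal and not a proper subideal, and this is exactly where I would invoke that $L$ is the $R$-linear image of the $R$-module $\Z^3$, so that it requires no $R$-module generators beyond its three $\Z$-generators. I would also emphasize that the hypothesis $uv\neq 0$ is used in two essential places: it makes $e_1$ a cyclic vector (so that $\theta$ exists) and it makes $uv$ a unit of $K$ (so that the final rescaling is a linear equivalence).
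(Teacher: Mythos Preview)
Your argument is correct and follows precisely the approach the paper intends: choose $e_1$ as a cyclic vector for $A$ (which works because $uv\neq 0$), identify $(\Q^3,A)$ with $K$ via $e_1\mapsto 1$, read off the images of $e_2,e_3$, and rescale by $uv$. Your additional care in justifying $L=I$ by using that the $\Z$-span is already an $R$-module is a nice touch that the paper leaves implicit.
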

\begin{proof} The argument is a more complicated version of the argument in the proof of Proposition \ref{propICM2}. We omit the details. 
\end{proof}

Next we consider the map $\psi$ from the group $G$ to $\ICM(R)$ given by the rule
\begin{equation*}
    \psi\colon (\overline{u}, \overline{v}) \mapsto R\text{-ideal class of }(uv, v(x - a), (x - a)(x - b)).
\end{equation*}
(We will write $\psi(\overline{u}, \overline{v})$ instead of $\psi((\overline{u}, \overline{v}))$ to simplify the notation. Also, we will assume that the $\psi(u, v)$ is the fractional ideal written above and not its $R$-ideal class.)
\\
\\
First, we need to make sure that this map is well-defined and does not depend on the choice of representatives from the congruence classes of $u$ and $v$. Thus, we need to prove the following:
\begin{prop}
    For pairs of integers $(u, v)$ and $(u', v')$, with $uv, u'v' \neq 0$ and $u \equiv u' \pmod{(b - a)(c - a)}$ and $v \equiv v' \pmod{(c - b)(c - a)}$, the matrices
    \begin{equation*}
        A = 
        \begin{bmatrix}
            a & & \\
            u & b &\\
            & v & c
        \end{bmatrix}
        \quad 
        \text{and}
        \quad
        A' = 
        \begin{bmatrix}
            a & & \\
            u' & b &\\
            & v' & c
        \end{bmatrix}
    \end{equation*}
    are conjugate over $\Z$. 
\end{prop}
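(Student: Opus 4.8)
The plan is to write the conjugating matrix explicitly as a product of three elementary lower-triangular matrices, each one adjusting a single entry below the diagonal. For $i>j$ put $P_{ij}(t)=I+tE_{ij}\in\GL_3(\Z)$, with inverse $P_{ij}(-t)$. The first thing I would do is record, once and for all, the effect of conjugating a lower-triangular matrix with diagonal $(a,b,c)$ by each of $P_{21}(t)$, $P_{32}(s)$, $P_{31}(r)$; these are one-line matrix computations. Conjugation by $P_{21}(t)$ replaces the $(2,1)$-entry $w_{21}$ by $w_{21}-t(b-a)$ and adds $-t\,w_{32}$ to the $(3,1)$-entry, changing nothing else; conjugation by $P_{32}(s)$ replaces the $(3,2)$-entry $w_{32}$ by $w_{32}-s(c-b)$ and adds $s\,w_{21}$ to the $(3,1)$-entry; and conjugation by $P_{31}(r)$ subtracts $r(c-a)$ from the $(3,1)$-entry and changes nothing else.

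With these formulas in hand the argument proceeds in three steps. First, set $t=(u-u')/(b-a)$: the hypothesis $(b-a)(c-a)\mid u-u'$ makes $t$ an integer, and in fact forces $(c-a)\mid t$. Conjugating $A$ by $P_{21}(t)$ yields a matrix with $(2,1)$-entry $u'$, $(3,2)$-entry $v$, and $(3,1)$-entry $-tv$. Second, set $s=(v-v')/(c-b)$, which by $(c-b)(c-a)\mid v-v'$ is again an integer divisible by $c-a$; conjugating by $P_{32}(s)$ produces a matrix with $(2,1)$-entry $u'$, $(3,2)$-entry $v'$, and $(3,1)$-entry $su'-tv$. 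Third, since $c-a$ divides both $s$ and $t$ it divides $su'-tv$, so $r=(su'-tv)/(c-a)\in\Z$; conjugating by $P_{31}(r)$ clears the $(3,1)$-entry and leaves exactly $A'$. Hence $A'=TAT^{-1}$ with $T=P_{31}(r)P_{32}(s)P_{21}(t)\in\GL_3(\Z)$.

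The one delicate point — and the reason the moduli $(b-a)(c-a)$ and $(c-b)(c-a)$, rather than just $b-a$ and $c-b$, appear in the statement — is that the elementary conjugation used to correct $u$ (respectively $v$) does not act independently of the $(3,1)$-entry: it leaks a cross-term $-tv$ (respectively $su'$) into position $(3,1)$. Everything closes up precisely because the stated congruences force the correction factors $t$ and $s$ to be multiples of $c-a$, which is exactly the divisibility needed so that the accumulated term $su'-tv$ in the $(3,1)$-slot can be annihilated by a single integral conjugation $P_{31}(r)$. I expect this divisibility bookkeeping to be the main (and only) obstacle; the matrix computations themselves are routine.
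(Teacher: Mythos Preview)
Your proof is correct and is essentially the same approach as the paper's: conjugate by a unipotent lower-triangular matrix and check that the resulting linear system in the off-diagonal parameters is solvable in $\Z$ under the stated congruences. The only difference is presentational---you factor the conjugating matrix as a product of three elementary transvections $P_{31}(r)P_{32}(s)P_{21}(t)$ and track the entries step by step, whereas the paper works directly with a single matrix $\begin{smallmatrix}1&&\\r&1&\\t&s&1\end{smallmatrix}$ and writes down the three simultaneous equations; both routes reduce to the same divisibility check $(c-a)\mid su'-tv$ (equivalently $(c-a)\mid us-v'r$), which follows immediately from $(c-a)\mid s$ and $(c-a)\mid t$.
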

\begin{proof} This is based on straightforward calculations, which we omit. 
\end{proof}
Now, we can stop using the bar notation for the elements of $\Z/(b - a)(c - b)$ and $\Z/(c - b)(c - a)$. Next, we claim that $\psi$ is a homomorphism of commutative monoids (note that we also have $\psi(1, 1) = R$). 
\begin{prop}
    For any elements $(u, v)$ and $(u', v')$ of $G$, we have 
    $$\psi(u, v)\psi(u', v') = \psi(uu', vv').$$ In other words, $\psi \colon G \rightarrow \ICM(R)$ is a homomorphism of commutative monoids. 
\end{prop}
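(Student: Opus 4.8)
The plan is to verify the identity at the level of fractional ideals, in fact to prove the stronger statement that the two ideals are \emph{equal}. Throughout write $y=x-a$ and $z=x-b$ for the images of these polynomials in $R$, so that the ideal attached to $(u,v)$ in the previous propositions is $I_{u,v}=(uv,\,vy,\,yz)$, and the claim becomes
$$I_{u,v}\cdot I_{u',v'}=I_{uu',\,vv'}.$$
The whole argument rests on three reduction relations in $R$, all immediate consequences of $(x-a)(x-b)(x-c)=0$: from $x-a=(x-b)+(b-a)$ one gets $y^{2}=yz+(b-a)y$; from $x-a=(x-c)+(c-a)$ one gets $y\cdot yz=(c-a)\,yz$; and combining these (or using $x-b=(x-c)+(c-b)$) one gets $(yz)^{2}=(c-a)(c-b)\,yz$. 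Since $\{1,y,yz\}$ is a $\Z$-basis of $R$, these relations also show immediately that for any nonzero integers $U,V$ the ideal $I_{U,V}$ equals the sublattice $\Z\cdot UV\oplus\Z\cdot Vy\oplus\Z\cdot yz$, because this sublattice is already stable under multiplication by $y$ and by $yz$; I would record this description for $(U,V)=(uu',vv')$.

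For the inclusion ``$\subseteq$'' the plan is simply to expand $I_{u,v}I_{u',v'}$ as the $R$-ideal generated by the nine products of generators and reduce each with the relations above. The products involving a scalar generator $uv$ or $u'v'$ reduce trivially (they give $uu'vv'$, $uvv'\,y$, $u'vv'\,y$, $uv\cdot yz$, $u'v'\cdot yz$), while the remaining four reduce to $(vy)(v'y)=vv'\,yz+(b-a)vv'\,y$, $(vy)(yz)=(c-a)v\cdot yz$, $(yz)(v'y)=(c-a)v'\cdot yz$, and $(yz)(yz)=(c-a)(c-b)\,yz$. In every one of these reduced elements the coefficient of $y$ is a multiple of $vv'$ and the coefficient of $yz$ is an arbitrary integer, so each lies in $\Z\cdot uu'vv'\oplus\Z\cdot vv'y\oplus\Z\cdot yz=I_{uu',vv'}$; since $I_{uu',vv'}$ is an $R$-ideal, it therefore contains $I_{u,v}I_{u',v'}$.

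For ``$\supseteq$'' I would exhibit each of the three generators $uu'vv'$, $vv'y$, $yz$ of $I_{uu',vv'}$ inside $P\colonequals I_{u,v}I_{u',v'}$, and this is the only place where the hypothesis $(u,v),(u',v')\in G$ is actually used, namely through $\gcd(u,b-a)=\gcd(u,c-a)=1$ and $\gcd(v,c-a)=\gcd(v,c-b)=1$. The constant $uu'vv'$ is the product $(uv)(u'v')$. For $yz$: the products $uv\cdot yz$ and $(c-a)v\cdot yz$ lie in $P$, and since $\gcd(u,c-a)=1$ a $\Z$-combination of them equals $v\cdot yz$; combining $v\cdot yz$ with $(yz)(yz)=(c-a)(c-b)\,yz\in P$ and using $\gcd(v,(c-a)(c-b))=1$ produces $yz\in P$. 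For $vv'y$: the elements $uvv'\,y$ and $vv'\,yz+(b-a)vv'\,y$ lie in $P$, and since $yz\in P$ already so does $vv'\,yz$, hence $(b-a)vv'\,y\in P$; now $\gcd(u,b-a)=1$ gives $vv'y\in P$. This yields $P\supseteq(uu'vv',\,vv'y,\,yz)=I_{uu',vv'}$, hence $P=I_{uu',vv'}$, i.e.\ $\psi(u,v)\psi(u',v')=\psi(uu',vv')$. Finally $I_{1,1}=(1,y,yz)=R$, so $\psi(1,1)$ is the identity of $\ICM(R)$, and the preceding proposition guarantees that the class of $I_{uu',vv'}$ depends only on $uu'$ modulo $(b-a)(c-a)$ and $vv'$ modulo $(c-b)(c-a)$; hence $\psi$ is a well-defined homomorphism of commutative monoids.

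The conceptual content is light; the main obstacle is bookkeeping --- reducing the nine products correctly and, in the reverse inclusion, making sure the three gcd manipulations call on exactly the coprimalities that membership in $G$ provides, and never on something $G$ does not give (for instance $\gcd(u,u')=1$ or $\gcd(v,v')=1$ are never needed). A tidier but equivalent route would be to write all four of $I_{u,v}$, $I_{u',v'}$, $I_{uu',vv'}$ and $P$ out explicitly as $\Z$-lattices in $R$ relative to $\{1,y,yz\}$ and compare their Hermite normal forms, but the generator-by-generator comparison above is shorter.
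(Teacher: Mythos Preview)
Your proof is correct and follows essentially the same route as the paper: both compute the product ideal $I_{u,v}I_{u',v'}$ explicitly and simplify using the same three relations $y^{2}=yz+(b-a)y$, $y\cdot yz=(c-a)yz$, $(yz)^{2}=(c-a)(c-b)yz$ together with the coprimality conditions $\gcd(u,b-a)=\gcd(u,c-a)=\gcd(v,c-b)=\gcd(v,c-a)=1$, arriving at genuine equality of ideals (not merely linear equivalence). The only organizational difference is that the paper simplifies the nine-element generating set in one sweep, while you split the argument into the two inclusions and make the $\Z$-lattice description $I_{U,V}=\Z\cdot UV\oplus\Z\cdot Vy\oplus\Z\cdot yz$ explicit; this makes it a bit clearer precisely which gcd condition is invoked at each step, but the substance is the same.
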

\begin{proof}
    Observe that
    \begin{align*}
        \psi(u, v)\psi(u', v') = &(uv, v(x - a), (x - a)(x - b))(u'v', v'(x - a), (x - a)(x - b))\\
        = &\Big(uu'vv', vv'u'(x - a), u'v'(x - a)(x - b),\\
        &\ vv'u(x - a), vv'(x - a)^2, v'(c - a)(x - a)(x - b),\\
        &\ uv(x - a)(x - b), v(c - a)(x - a)(x  - b), (c - a)(c - b)(x - a)(x - b) \Big)\\
        = &\Big(uu'vv', vv'\gcd(u, u')(x - a), vv'(x - a)^2,\\
        &\ \gcd(uv, u'v', v(c - a), v'(c - a), (c - a)(c - b))(x - a)(x - b) \Big). 
    \end{align*}
    Since $\gcd(v, c - b) = 1$, the last generator can be rewritten as $\gcd(uv, u'v', c - a)(x - a)(x - b)$. Since $\gcd(u, c - a) = \gcd(v, c - a) = 1$, $\gcd(uv, c - a) = 1$ and hence the mentioned generator simplifies to $(x - a)(x - b)$. Thus, 
    \begin{align*}
        \psi(u, v)\psi(u', v') &= \Big(uu'vv', vv'\gcd(u, u')(x - a), vv'(x - a)^2, (x - a)(x - b) \Big) \\
        &= \Big(uu'vv', vv'\gcd(u, u')(x - a), vv'(b - a)(x - a), (x - a)(x - b) \Big) \\
        &= \Big(uu'vv', vv'\gcd(u, u', b - a)(x - a), (x - a)(x - b) \Big) \\
        &= \Big(uu'vv', vv'(x - a), (x - a)(x - b) \Big) \\
        &= \psi(uu', vv'). \qedhere
    \end{align*}
\end{proof}

It follows that $\psi \colon G \rightarrow \Cl(R)$ is a homomorphism of abelian groups. Next, we describe $\ker \psi$ and find its order.
\begin{prop} \label{ker_ord_prop}
    $K$ is a subgroup of $G$ and $\ker \psi$ is a direct product of $H$ and $K$. Furthermore, $\abs{H}= 4$ and $\abs{K}= \varphi(c - a) \frac{\Delta_1}{\varphi(\Delta_1)}$, so $\abs{\ker \psi} = 4\varphi(c - a) \frac{\Delta_1}{\varphi(\Delta_1)}$.
\end{prop}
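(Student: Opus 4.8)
The plan is to pass from $\ker\psi$ to $\Z$-conjugacy classes of matrices, obtain the decomposition $\ker\psi=HK$ there, and then count. First I would record that, by the construction of $\psi$ and the Latimer--MacDuffee correspondence (Theorem~\ref{thmLM}), a class $(\bar u,\bar v)\in G$ lies in $\ker\psi$ precisely when the matrix $A_{u,v}$ (lower triangular, with diagonal $(a,b,c)$ and subdiagonal entries $u,v$) is $\Z$-conjugate to $A_{1,1}$; here we fix integer representatives with $uv\neq0$, which is possible since the moduli involved are $\ge c-a\ge4$, and $A_{1,1}$ represents the identity class because $\psi(1,1)=(1,x-a,(x-a)(x-b))=R$. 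A direct computation with the conjugating matrix $\begin{bmatrix}1&&\\ r&1&\\ t&s&1\end{bmatrix}$ shows that $A_{u,v}$ is conjugate to $A_{u',v'}$ by such a matrix if and only if $b-a\mid u-u'$, $c-b\mid v-v'$ and $c-a\mid u\tfrac{v-v'}{c-b}-v'\tfrac{u-u'}{b-a}$; taking $u'=v'=1$ reproduces the three conditions defining $K$, so $K\subseteq\ker\psi$. Also $H\subseteq\ker\psi$, since each of $\psi(\pm1,\pm1)$ is the class of an ideal containing $\pm1$, hence equals $[R]$.

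Next I would prove $K$ is a subgroup and $\ker\psi=HK$. Writing $u_i=1+(b-a)r_i$, $v_i=1+(c-b)s_i$ and reducing the third $K$-condition for $(\overline{u_1u_2},\overline{v_1v_2})$ modulo $c-a$ using $u_is_i\equiv r_i$, one checks it holds; since $K$ is finite and contains $(\bar1,\bar1)$, it is a subgroup, and then $HK\subseteq\ker\psi$ because $\ker\psi$ is a subgroup containing $H$ and $K$. For the reverse inclusion, let $P\in\GL_3(\Z)$ satisfy $PA_{u,v}P^{-1}=A_{1,1}$. Because $a<b<c$ are distinct, $P$ carries the $c$-eigenspace $\langle e_3\rangle$ and the sum $\langle e_2,e_3\rangle$ of the $b$- and $c$-eigenspaces of $A_{u,v}$ onto the same subspaces for $A_{1,1}$, so $P$ preserves the flag $\langle e_3\rangle\subset\langle e_2,e_3\rangle\subset\Q^3$ (the lower-triangular analogue of the lemma preceding Proposition~\ref{propOrbits}); hence $P$ is lower triangular with $\pm1$ diagonal, that is, $P=DU$ with $D=\diag(\eps_1,\eps_2,\eps_3)$ and $U$ lower unipotent. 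Then $UA_{u,v}U^{-1}=D^{-1}A_{1,1}D=A_{\eps,\eta}$ with $\eps=\eps_1\eps_2$, $\eta=\eps_2\eps_3\in\{\pm1\}$, so by the criterion above $b-a\mid u-\eps$, $c-b\mid v-\eta$ and $c-a\mid u\tfrac{v-\eta}{c-b}-\eta\tfrac{u-\eps}{b-a}$; multiplying these through by $\eps$, $\eta$, $\eps$ respectively and using $\eps^2=\eta^2=1$ yields $(\overline{\eps u},\overline{\eta v})\in K$, so $(\bar u,\bar v)=(\bar\eps,\bar\eta)(\overline{\eps u},\overline{\eta v})\in HK$. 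Thus $\ker\psi=HK$.

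For the direct product it remains to check $H\cap K=\{(\bar1,\bar1)\}$: if $(\bar\eps,\bar\eta)\in K$ with $\eps,\eta\in\{\pm1\}$ not both $1$, the first two $K$-conditions force the nonzero ones among $b-a$ and $c-b$ into $\{1,2\}$, and then the third makes $c-a$ divide a nonzero integer of absolute value at most $4$, which a short case check excludes when $c-a\ge4$; hence $\ker\psi=H\times K$. Finally, $|H|=4$ since $\bar1\neq\overline{-1}$ in each of $(\Z/(b-a)(c-a))^\times$ and $(\Z/(c-b)(c-a))^\times$ (both moduli are $\ge c-a\ge4>2$), and the projection $(\bar u,\bar v)\mapsto\bar u$ restricts to a bijection from $K$ onto $\ker\!\bigl((\Z/(b-a)(c-a))^\times\to(\Z/(b-a))^\times\bigr)$ --- injectivity holds because $\gcd(u,c-a)=1$ forces $s=(v-1)/(c-b)$ modulo $c-a$ and hence $\bar v$, and surjectivity by choosing $s$ with $s\equiv u^{-1}\tfrac{u-1}{b-a}\pmod{c-a}$ and noting $v=1+(c-b)s$ is then automatically a unit modulo $(c-b)(c-a)$ since $uv\equiv1\pmod{c-a}$. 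Using $\gcd(b-a,c-a)=\Delta_1$ and $\varphi(mn)=\varphi(m)\varphi(n)\gcd(m,n)/\varphi(\gcd(m,n))$, this gives $|K|=\varphi((b-a)(c-a))/\varphi(b-a)=\varphi(c-a)\Delta_1/\varphi(\Delta_1)$, whence $|\ker\psi|=4\,\varphi(c-a)\Delta_1/\varphi(\Delta_1)$.

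The hard part will be the inclusion $\ker\psi\subseteq HK$: establishing that the conjugating matrix is lower triangular up to sign, and then correctly tracking the sign twists $\eps,\eta$. The closure of $K$ under multiplication is an honest but routine computation, and the remaining steps --- triviality of $H\cap K$ and the order counts --- are short once $\ker\psi=H\times K$ is in place.
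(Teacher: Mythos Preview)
Your proposal is correct and follows essentially the same route as the paper: translate $\ker\psi$ into $\Z$-conjugacy of the matrices $A_{u,v}$ with $A_{1,1}$, observe that any conjugating matrix must be lower triangular with $\pm1$ diagonal, peel off the diagonal sign to land in $K$, verify $H\cap K$ is trivial, and count $|K|$ by projecting onto the $u$-coordinate. Two minor slips to fix: the multiplier for the third divisibility should be $\eta$ (or $\eps\eta$), not $\eps$; and your computation of $|K|=\varphi((b-a)(c-a))/\varphi(b-a)$ tacitly uses that the reduction map $(\Z/(b-a)(c-a))^\times\to(\Z/(b-a))^\times$ is surjective, which deserves one sentence. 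Your observation that $uv\equiv 1\pmod{c-a}$ (via $uv-1=\tfrac{u-1}{b-a}(c-a)$) is cleaner than the paper's prime-by-prime check that $\gcd(v,c-a)=1$.
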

\begin{proof}
    First of all, we note that $(1, 1), (-1, 1), (1, -1), (-1, -1) \in \ker \psi$. Since $c - a \ge 4$ all these four elements are distinct. Hence $\abs{H} = 4$ and $H \subseteq \ker \psi$. Next, suppose $(u, v) \in \ker \psi$. Then there must exist integers $r, s, t$ and $\varepsilon_1, \varepsilon_2, \varepsilon_3 \in \{\pm 1\}$, such that
    \begin{equation*}
        \begin{bmatrix}
            \varepsilon_1 & &\\
            r & \varepsilon_2 & &\\
            t & s & \varepsilon_3
        \end{bmatrix}
        \begin{bmatrix}
            a & &\\
            u & b & \\
            & v & c\\
        \end{bmatrix}
        =
        \begin{bmatrix}
            a & &\\
            1 & b & \\
            & 1 & c\\
        \end{bmatrix}
        \begin{bmatrix}
            \varepsilon_1 & &\\
            r & \varepsilon_2 & &\\
            t & s & \varepsilon_3
        \end{bmatrix}. 
    \end{equation*}
    This matrix equation translates to
    \begin{equation*}
        \begin{cases}
            u\varepsilon_2 - \varepsilon_1 = (b - a)r,\\
            v\varepsilon_3 - \varepsilon_2 = (c - b)s,\\
            us - r = (c - a)t.
        \end{cases}
    \end{equation*}
    In particular, it follows that $u \equiv \pm 1 \pmod{b - a}$ and $v \equiv \pm 1 \pmod{c - b}$. Multiplying $(u, v)$ by a suitable element of $H$, we can achieve $u \equiv 1 \pmod{b - a}$ and $v \equiv 1 \pmod{c - b}$. This implies $\varepsilon_1 = \varepsilon_2 = \varepsilon_3$. Without loss of generality we may assume that $\varepsilon_1 = \varepsilon_2 = \varepsilon_3 = 1$. Thus, for the element $(u, v) \in G$, we have
    \begin{equation}\label{eqDivs}
        b - a \mid u - 1, \qquad c - b \mid v - 1, \qquad c - a \mid \frac{u(v -  1)}{c - b} - \frac{u - 1}{b - a},
    \end{equation}
    which means $(u, v) \in K$. 
    
    Next we will show that $K$ is a subgroup of $G$. It is obvious that $(1, 1) \in K$. Since $G$ is a finite group, it suffices to show that $K$ is closed under multiplication. Suppose $(u, v), (u', v') \in K$. Then 
    \begin{align*}
        (b - a)u(v - 1) &\equiv (c - b)(u - 1) \pmod{\Delta}, \\
        (b - a)u'(v' - 1) &\equiv (c - b)(u' - 1) \pmod{\Delta}.
    \end{align*}
    Note that modulo $\Delta$ we have
    \begin{align*}
        (b - a)uu'(vv' - 1) &= u'v'(b - a)u(v - 1) + u(b - a)u'(v' - 1) \\
        &= u'v'(c - b)(u - 1) + u(c - b)(u' - 1)\\
        &= (c - b)(u'v' - 1)(u - 1) + (c - b)(uu' - 1). 
    \end{align*}
    So it suffices to show that $c - a \mid u'v' - 1$. Note that
    \[
        \Delta \mid (b - a)u'(v' - 1) - (c - b)(u' - 1) = (b - a)(u'v' - 1) - (c - a)(u' - 1). 
    \]
    Since $b - a \mid u' - 1$, we see that $c - a \mid u'v' - 1$ as desired. So, $K \leq G$. 

    The discussion at the beginning of the proof shows that $\ker \psi = HK$ and it is not difficult to see that $H \cap K = \{1\}$. So $\ker \psi$ is a direct product of the subgroups $H$ and $K$. 
   
    Now suppose we have chosen some $u \in \Big(\Z/(b - a)(c - a) \Big)^\times$ such that $u \equiv 1 \pmod{b - a}$. 
    The third divisibility in \eqref{eqDivs} implies that $v$ is determined uniquely. We need to make sure that this 
    integer $v$ satisfies $\gcd(v , c - a) = 1$. Suppose there is some prime number $p$ such that $p \mid v$ and $p\mid c - a$. Multiplying the third divisibility 
    relation by $(b - a)(c - b)$, we get $c - a \mid u(v - 1)(b - a) - (u - 1)(c - b)$. In particular, it follows that $c - a \mid (uv - 1)(c - b)$. Then $\frac{c - a}{\Delta_1} \mid uv - 1$. Since $v \equiv 1 \pmod{c - b}$, we must have $p \mid \frac{c - a}{\Delta_1}$, which is a contradiction. 
   
    It remains to find the number of elements of $\Big(\Z/(b - a)(c - a) \Big)^\times$ such that $u \equiv 1 \pmod{b - a}$. To do this, consider the homomorphism 
    $$\rho \colon \Big(\Z/(b - a)(c - a) \Big)^\times \rightarrow \Big( \Z/(b - a) \Big)^\times$$ given by the rule $\rho\colon u \mod{(c - a)(b - a)} \mapsto u \mod{(b - a)}$. The number we are looking for is equal to $\abs{\ker \rho}$. We will show that $\rho$ is surjective. We need to show that 
    for a fixed $w \in \Big( \Z/(b - a) \Big)^\times$ there is an integer $k$ such that $\gcd(w + k(b - a), c - a) = 1$. Note that if a prime number $p$ divides $\gcd(w + k(b - a), c - a)$, then we must have $p \nmid b - a$. Let $p_1, p_2, \dots, p_l$ denote those prime divisors of $c - a$ 
     that do not divide $b - a$. By the Chinese Remainder Theorem, we can find $k_0 \in \Z$, such that $w + k_0(b - a) \equiv 1 \pmod{p_j}$, for $j = 1, 2, \dots, l$. So $\rho$ is surjective and hence $$\abs{\ker \rho} = \frac{\varphi((b - a)(c - a))}{\varphi(b - a)} = \varphi(c - a)\frac{\Delta_1}{\varphi(\Delta_1)}.$$ (Here we use the formula $\varphi(mn) = \varphi(m) \varphi(n) \frac{\gcd(m, n)}{\varphi(\gcd(m, n))}$.) 
   We conclude that $\abs{\ker \psi} = 4\abs{\ker \rho} = 4\varphi(c - a)\frac{\Delta_1}{\varphi(\Delta_1)}$.

Using the same formula $\varphi(mn) = \varphi(m) \varphi(n) \frac{\gcd(m, n)}{\varphi(\gcd(m, n))}$, it can be shown that $\varphi((b - a)(c - a))\varphi((c - b)(c - a)) = \varphi(\Delta)\varphi(c - a)$, so $\abs{G} = \varphi(\Delta)\varphi(c - a)$. Now using the First Isomorphism Theorem, we see that $\abs{\im \psi} = \frac{\varphi(\Delta)\varphi(\Delta_1)}{4 \Delta_1}$. Thus, by \eqref{formula_cl}, $\psi$ is surjective. Therefore $\Cl(R) \cong G/\ker \psi = G / HK$, which concludes the proof of Theorem \ref{thm_clr}.
\end{proof}

For $\Delta_1 = 1$, there is a simpler description of $\Cl(R)$. 
\begin{thm}
    If $c - a \ge 4$ and $\Delta_1 = 1$, then $\Cl(R)$ is isomorphic to the quotient of the group
    \[
        G' = \Big ( \Z/(b - a) \Big )^\times \times \Big ( \Z/(c - b) \Big )^\times \times \Big ( \Z/(c - a) \Big )^\times
    \]
    by the subgroup
    \[
        H' = \Big\{ (\overline{1}, \overline{1}, \overline{1}), (\overline{1}, -\overline{1}, -\overline{1}), (-\overline{1}, \overline{1}, -\overline{1}), (-\overline{1}, -\overline{1}, \overline{1}) \Big \}.
    \]
\end{thm}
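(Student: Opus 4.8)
The plan is to deduce the statement from Theorem~\ref{thm_clr}, which already identifies $\Cl(R)$ with $G/HK$, where $G = \bigl(\Z/(b-a)(c-a)\bigr)^\times\times\bigl(\Z/(c-b)(c-a)\bigr)^\times$ and $N=HK$. Thus it suffices to construct an isomorphism $G/HK\cong G'/H'$. The first observation is that the hypothesis $\Delta_1=1$ forces $b-a$, $c-b$, $c-a$ to be pairwise coprime: since $c-a=(b-a)+(c-b)$, any prime dividing two of the three numbers divides all three, hence divides $\Delta_1=\gcd(b-a,\,c-b,\,c-a)$. Consequently the Chinese Remainder Theorem gives $\bigl(\Z/(b-a)(c-a)\bigr)^\times\cong\bigl(\Z/(b-a)\bigr)^\times\times\bigl(\Z/(c-a)\bigr)^\times$ and similarly for the second factor of $G$.

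Next I would introduce the homomorphism
\[
\Phi\colon G\To G',\qquad \Phi(u,v)=\bigl(u\bmod(b-a),\ v\bmod(c-b),\ uv\bmod(c-a)\bigr).
\]
It is well defined and multiplicative, and it is surjective: given $(\alpha,\beta,\gamma)\in G'$, use the CRT decompositions to choose $u$ with $u\equiv\alpha\pmod{b-a}$, $u\equiv 1\pmod{c-a}$ and $v$ with $v\equiv\beta\pmod{c-b}$, $v\equiv\gamma\pmod{c-a}$; then $\Phi(u,v)=(\alpha,\beta,\gamma)$.

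The heart of the argument is to show $\ker\Phi=K$. Suppose $b-a\mid u-1$ and $c-b\mid v-1$, and write $u=1+(b-a)r$, $v=1+(c-b)s$ with $r,s\in\Z$, so that $\frac{u-1}{b-a}=r$ and $\frac{u(v-1)}{c-b}=us$. Reducing modulo $c-a=(b-a)+(c-b)$ and using $b-a\equiv-(c-b)$, a short computation gives $uv-1\equiv(c-b)(us-r)\pmod{c-a}$, and since $\gcd(c-b,c-a)=1$ this shows
\[
c-a\mid uv-1\quad\Longleftrightarrow\quad c-a\mid us-r=\frac{u(v-1)}{c-b}-\frac{u-1}{b-a}.
\]
Hence $(u,v)\in\ker\Phi$ if and only if $(u,v)\in K$, i.e.\ $\ker\Phi=K$.

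Finally, since $\ker\Phi=K\subseteq HK$, the map $\Phi$ induces an isomorphism $G/HK\xrightarrow{\ \sim\ }G'/\Phi(HK)$, because $\Phi^{-1}\bigl(\Phi(HK)\bigr)=HK\cdot\ker\Phi=HK$. Now $\Phi(HK)=\Phi(H)$ as $K=\ker\Phi$, and evaluating $\Phi$ on the four elements of $H$ produces precisely $(\overline1,\overline1,\overline1)$, $(\overline1,-\overline1,-\overline1)$, $(-\overline1,\overline1,-\overline1)$, $(-\overline1,-\overline1,\overline1)$, that is, $\Phi(H)=H'$. Combining this with $\Cl(R)\cong G/HK$ from Theorem~\ref{thm_clr} gives $\Cl(R)\cong G'/H'$, as claimed. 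The only step that is not a formal consequence of the Chinese Remainder Theorem and the isomorphism theorems is the kernel computation, and that reduces to the single displayed congruence above, so I expect no real obstacle beyond bookkeeping.
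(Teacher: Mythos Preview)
Your proposal is correct and follows essentially the same route as the paper: you define the same homomorphism $G\to G'$, identify its kernel with $K$, and then pass to the quotient by $HK$ via the isomorphism theorems. The only notable difference is that you establish surjectivity of $\Phi$ directly via the Chinese Remainder Theorem (using the pairwise coprimality implied by $\Delta_1=1$), whereas the paper deduces surjectivity by comparing $|G/K|$ with $|G'|$ using the order of $K$ computed in Proposition~\ref{ker_ord_prop}; your kernel computation also makes explicit the equivalence between the original defining condition of $K$ and the simplified condition $c-a\mid uv-1$, which the paper merely asserts.
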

\begin{proof}
    In this case
    \[
        K = \Big \{\ (\overline{u}, \overline{v})\ :\  b - a \mid u - 1, c - b \mid v - 1, c - a \mid uv - 1 \Big \}.
    \]
    Consider the map $\pi: G \rightarrow G'$ defined by the rule 
    $$(u \bmod{(b - a)(c - a)}, v \bmod{(c - b)(c - a)}) \longmapsto (u \bmod{b - a}, v \bmod{c - b}, uv \bmod{c - a}).$$ It is not difficult to check that this map is a group homomorphism and $\ker \pi = K$. By Proposition \ref{ker_ord_prop}, $|K| = \varphi(c - a)$. Thus, 
    \begin{align*}
        |G/K| = \frac{|G|}{|K|} &= \frac{\varphi(b - a)\varphi(c - a) \varphi(c - b)(c - a)}{\varphi(c - a)} \\ &= \varphi(b - a)\varphi(c - b)\varphi(c - a) = |G'|.
    \end{align*}
    Therefore, $\pi$ is surjective and $G/K \cong G'$. Now note that $\pi(KH) = H'$. By the Third Isomorphism Theorem, $\Cl(R) \cong G/KH \cong G'/H'$. 
\end{proof}

\bibliographystyle{amsalpha}
\bibliography{Bibliography.bib}

\end{document}